\titleformat{\section}[block]{\Large\bfseries\filcenter}{\thesection}{1em}{}
\def\expandafter\normalsize\expandafter{%
\normalsize
\setlength\abovedisplayskip{6pt}
\setlength\belowdisplayskip{6pt}
\setlength\abovedisplayshortskip{6pt}
\setlength\belowdisplayshortskip{6pt}
}
\theoremstyle{plain}
\renewcommand*\thesection{\arabic{section}}
\numberwithin{equation}{section} 
\newtheorem{theorem}{Theorem}[section]
\newtheorem{lemma}[theorem]{Lemma}
\newtheorem*{lemma*}{Lemma}
\newtheorem{proposition}[theorem]{Proposition}
\newtheorem{corollary}[theorem]{Corollary}
\theoremstyle{definition}
\newtheorem{definition}[theorem]{Definition}
\newtheorem{example}[theorem]{Example}
\newtheorem{question}[theorem]{Question}
\let\expandafter\oldproof\csname\string\proof\endcsname
\let\oldendproof\endproof
\renewenvironment{proof}[1][\proofname]{%
\oldproof[\upshape \bfseries #1]%
}{\oldendproof}
\def\@makechapterhead#1{%
\vspace*{50\p@}%
{\parindent \z@ \raggedright \normalfont
\interlinepenalty\@M
\Huge\bfseries  \thechapter.\quad #1\par\nobreak
\vskip 40\p@
}}
\renewcommand{\Re}{\operatorname{Re}}
\renewcommand{\Im}{\operatorname{Im}}
\newcommand{\eps}{\varepsilon}
\DeclareMathOperator{\diam}{diam}
\DeclareMathOperator \dist{dist}
\DeclareMathOperator{\Sym}{Sym}
\DeclareMathOperator{\cof}{cof}
\DeclareMathOperator{\tr}{tr}
\DeclareMathOperator{\SO}{SO}
\def \a{\alpha}
\def \Om{\Omega}
\def \R {\mathbb{R}}
\def \C{\mathbb{C}}
\def \N{\mathbb{N}}
\def \D{\textup{D}}
\def \e{\varepsilon}
\def \d{\,\textup{d}}
\def \p{\partial}
\def \mc{\mathcal}
\def \w{\rightharpoonup}
\def \tp{\textup}
\def \Id{\textup{Id}}
\def \loc{\textup{loc}}
\begin{document}

	\title{\textbf{Unique continuation for differential inclusions}}
	
	\author[1]{{\Large Guido De Philippis}}
	\author[2]{{\Large Andr\'e Guerra}}
	\author[3]{{\Large Riccardo Tione}}
	
	\affil[1]{\small Courant Institute of Mathematical Sciences, New York University,
	\protect \\ 251 Mercer St., New York, NY 10012, USA
	\protect\\
	{\tt{guido@cims.nyu.edu}} \vspace{1em} \ }
	
	\affil[2]{\small Institute for Theoretical Studies,  ETH Zürich,  Clausiusstrasse 47, 8006 Zürich,  Switzerland
	\protect\\
	{\tt{andre.guerra@eth-its.ethz.ch}} \vspace{1em} \ }
	
	\affil[3]{\small Max Planck Institute for Mathematics in the Sciences, Inselstrasse 22, 04103 Leipzig, Germany
	\protect\\
	{\tt{riccardo.tione@mis.mpg.de}}  }
	
	\date{}
	
	\maketitle

	\begin{center}
	\vspace{-0.4cm}
	\textit{\large Dedicated to Luigi Ambrosio, in the occasion of his 60th anniversary. \vspace{1em} }
	\end{center}
	
	\begin{abstract}
We consider the following question arising in the theory of differential inclusions: given an \emph{elliptic} set $\Gamma$ and a Sobolev map $u$ whose gradient lies in the quasiconformal envelope of $\Gamma$ and touches $\Gamma$ on a set of positive measure,  must $u$ be affine? We answer this question positively for a suitable notion of ellipticity, which for instance encompasses the case where $\Gamma \subset \R^{2\times 2}$ is an elliptic, smooth, closed curve.  More precisely, we prove that the distance of $\D u$ to $\Gamma$ satisfies the strong unique continuation property.
As a by-product, we obtain new results for nonlinear Beltrami equations and recover known results for the reduced Beltrami equation and the Monge--Amp\`ere equation: concerning the latter,  we obtain a new proof of the $W^{2,1+\e}$-regularity for two-dimensional solutions.
	\end{abstract}
	
	
\section{Introduction}
	
Let $\Omega\subset \R^n$ be a connected open set and consider a subset $\Gamma\subset \R^{n\times n}$. In this paper we study solutions $u\in W^{1,n}_\loc(\Omega,\R^n)$ to the differential inclusion associated with the \emph{K-quasiconformal envelope} of $\Gamma$, 
\begin{equation*}
\label{eq:inc}
\D u(x) \in \mc{E}_\Gamma  \quad \text{for a.e.\ } x\in \Omega.
\end{equation*}
More precisely, and following \cite{Faraco2008,Kirchheim2008}, for a fixed $K\in [1,\infty)$ we consider the set
\begin{equation}\label{eq:Kqc}
\mc E_\Gamma \equiv \{X\in \R^{n\times n}: |A-X|^n\leq K \det (A-X) \tp{ for all } A\in \Gamma\},
\end{equation}
where $|\cdot|$ denotes the operator norm.  For brevity we will omit $K$ from the definition of $\mc{E}_\Gamma$. 

\subsection{Main results}

The question we answer in this paper is the following:
\begin{question}\label{Q}
Let $\Gamma \subset \R^{n\times n}$ be an \emph{elliptic} set. Suppose $u\in W^{1,n}_\loc(\Omega,\R^n)$ satisfies $\D u\in \mc E_\Gamma$ a.e.\ in $\Omega$. If $|\{x \in \Omega: \D u \in \Gamma\}|> 0$, is then $u$ affine?
\end{question}

The heuristic reason  why one would expect this question to have a positive answer comes from the following observation. By definition of $\mc{E}_\Gamma$, a map $u\in W^{1,n}_\loc(\Omega,\R^n)$ satisfies $\D u \in \mc E_\Gamma$ a.e.\  if and only if the map $u-A$ is $K$-quasiregular for all $A\in \Gamma$, see already Definition \ref{eq:qr}. Now, $K$-quasiregular maps enjoy strong rigidity properties: it is a deep analytic fact, due to Reshetnyak, that if $f$ is quasiregular and if $\det(\D f)=0$ on a set of positive measure,  then $f$ is constant \cite{Reshetnyak1989}.  In fact, this property of quasiregular mappings is the most basic case covered by Question \ref{Q}, as it corresponds to taking $\Gamma = \{0\}$.  It follows from this discussion that a non-affine map $u$ having the properties expressed in Question \ref{Q} must be such that
\[
|\{x \in \Omega: \D u \in \Gamma\}|> 0, \quad \text{ but }\quad |\{x \in \Omega: \D u = A\}|= 0,\; \forall A \in \Gamma,
\]
i.e.\ it must be quite pathological.  We note, however, that the relatively low regularity assumed on $u$ is not an essential difficulty in Question \ref{Q}: indeed,  even if $u$ is assumed to be smooth,  it seems to be a non-trivial task to rule out the pathological behavior just described.

It is clear that, in order to answer Question \ref{Q}, one needs to specify a notion of \emph{ellipticity}.  Generally speaking, one could call a set $\Gamma$ \emph{elliptic} if (i) Lipschitz maps satisfying $\D u \in \Gamma$ a.e.\ are actually $C^{1,\alpha}$, and if (ii) weakly convergent sequences of maps whose gradients approach $\Gamma$ actually converge strongly. This notion  of ellipticity is,  however,  too weak for our purposes.  Instead, we will study sets $\Gamma$ having the following stronger property:

\begin{definition}\label{def:main}
We say that a set $\Gamma\subset \R^{n\times n}$
satisfies a \textit{rigidity estimate} if there is a constant $C_\Gamma>0$ such that, for all balls $B\subset \R^n$ and all $v\in W^{1,n}(B,\R^n)$, we have
	\begin{equation}
	\label{eq:weakrig}
	\inf_{\substack{A \in \Gamma}}
 \int_{\frac 1 2 B} |\D v-A|^n \d x
	\leq C_\Gamma \int_{B} \dist(\D v, \Gamma)^n \d x.
	\end{equation}		
\end{definition}

We will discuss this  definition in more detail  below, but for now let us observe that any set $\Gamma$ which satisfies \eqref{eq:weakrig} is very rigid:  the only maps $\varphi \in W_{\loc}^{1,n}(\Omega,\R^n)$ fulfilling $\D\varphi \in \Gamma$ a.e.\ are affine.  The prototype of a set $\Gamma$ satisfying this condition is 
$$\Gamma = \SO(2) \equiv \{A \in \R^{2\times 2}: A^TA = \Id, \det(A) > 0\},$$
as first shown in \cite{Friesecke2002}.  This example is extremely important as it is related to the Burkholder function and quasiregular maps on one hand \cite{Astala2022,Baernstein1997a,Sverak1990} and to the Monge--Amp\`ere equation on the other \cite{Ambrosio2011}, the latter connection playing a key role in the present paper. We refer the reader to \cite{Hirsch2023,Lamy2019,Muller1999a, Sverak1993,Tione2021,Zhang1997} for more results concerning elliptic and non-elliptic differential inclusions.

Our first main theorem provides a positive answer to Question \ref{Q}, by asserting a unique continuation principle for the distance function:	
	
	\begin{theorem}\label{thm:main}
	Let $\Gamma\subset \R^{n\times n}$ satisfy a   rigidity estimate. Let $u \in W^{1,n}_\loc(\Omega,\R^n)$ satisfy 
\begin{equation}
\label{eq:incEgamma}
\D u\in \mc E_\Gamma\quad \text{a.e.\ in } \Omega.
\end{equation}
	Then either $\dist(\D u, \Gamma)>0$ a.e.\ in $\Omega$,  or $\dist(\D u, \Gamma)=0$ a.e.\ in $\Omega$, in which case  $u$ is affine. 
	\end{theorem}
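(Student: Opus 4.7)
Set $F := \{x \in \Omega : \D u(x) \in \Gamma\}$. If $|F| = 0$ the first alternative holds and there is nothing to do, so suppose $|F| > 0$; the plan is to show that $u$ is affine, which immediately yields the second alternative $\dist(\D u, \Gamma) = 0$ a.e. By the Lebesgue differentiation theorem, pick $x_0 \in F$ that is simultaneously a Lebesgue point of $\D u$ and a density point of $F$, and set $A_0 := \D u(x_0) \in \Gamma$ (using that $\mc E_\Gamma$ depends only on $\overline{\Gamma}$, so one may assume $\Gamma$ closed). The shifted map $v := u - A_0\cdot \id$ is $K$-quasiregular on $\Omega$ because $A_0 \in \Gamma$ and $\D u \in \mc E_\Gamma$, and the Lebesgue point property yields $\fint_{B_r(x_0)} |\D v|^n \d x \to 0$ as $r \to 0$. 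By Reshetnyak's theorem, $u$ will be affine once we know $\D v = 0$ on a positive-measure set.

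The heart of the argument is a blow-up around $x_0$. Set $\tau(r)^n := \fint_{B_r(x_0)} |\D v|^n$; if $\tau(r_0) = 0$ for some small $r_0$ then $\D v \equiv 0$ on $B_{r_0}(x_0)$ and Reshetnyak concludes, so suppose $\tau(r) > 0$ for all sufficiently small $r$ and consider the normalized rescalings
$$\tilde u_r(y) := \frac{u(x_0+ry) - u(x_0) - r\,A_0\, y}{r\,\tau(r)},$$
each $K$-quasiregular on an expanding domain, with $\fint_{B_1} |\D \tilde u_r|^n = 1$. By $W^{1,n}$-compactness of $K$-quasiregular maps, along a subsequence $\tilde u_r \to \tilde u_0$, a $K$-quasiregular map on $\R^n$. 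Rescaling the rigidity inequality \eqref{eq:weakrig} applied to $u$ on $B_{2r}(x_0)$ and dividing by $\tau(r)^n r^n$ yields
$$\int_{B_1} |\D \tilde u_r - B_r|^n \d y \leq C_\Gamma \int_{B_2} \dist(\D \tilde u_r,\,\Gamma_r)^n \d y,$$
with $\Gamma_r := (\Gamma - A_0)/\tau(r)$ and $B_r := (A_{2r} - A_0)/\tau(r) \in \Gamma_r$, where $A_{2r}$ is the minimizer of \eqref{eq:weakrig} on $B_{2r}(x_0)$. The density hypothesis at $x_0$ gives $\D \tilde u_r \in \Gamma_r$ on an asymptotically full-measure subset of $B_2$; combined with the higher integrability of $|\D \tilde u_r|^n$ from quasiregularity (via Gehring's lemma), the right-hand side vanishes in the limit. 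Passing to the limit along $B_r \to B_\infty \in T_{A_0}\Gamma$ (the tangent cone at $A_0$) therefore gives $\D \tilde u_0 \equiv B_\infty$ on $B_1$, so $\tilde u_0$ is affine on $B_1$ and by unique continuation for quasiregular maps globally affine; the normalization forces $|B_\infty|^n = 1$, so $B_\infty \neq 0$.

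The contradiction then comes from the intrinsic local structure of $v$ at $x_0$. If $v$ were non-constant, its local index at $x_0$ would be some finite integer $k \geq 2$ — the lower bound $k \geq 2$ comes from $\D v(x_0) = 0$ in the Lebesgue sense, while in two dimensions the integer character comes from the Stoilow factorization $v = h\circ\phi$ with $h$ holomorphic. A direct computation of the leading-order behaviour of $v$ near $x_0$ then shows $\tilde u_r$ converges to a non-affine quasi-polynomial of degree $k$, contradicting the affine character of $\tilde u_0$ established above. Hence $v$ is constant and $u$ is affine. The main obstacle in making this plan rigorous is the blow-up analysis: one must verify that the rescaled sets $\Gamma_r$ converge to $T_{A_0}\Gamma$ in a sense compatible with the rigidity inequality, and that the tangent cone inherits enough of the ellipticity of $\Gamma$ to force $\tilde u_0$ to be affine in the limit. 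In higher dimensions, where Stoilow factorization is unavailable, one must replace it by Reshetnyak's local structure theorem for $K$-quasiregular mappings.
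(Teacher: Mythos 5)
Your blow-up scheme, up to the conclusion that the normalized limit $\tilde u_0$ is a non-constant affine map, can indeed be made to work (modulo details you gloss over: uniform bounds for $\tilde u_r$ on every $B_R$ require the doubling property of $|\D v|^n$ from Lemma \ref{lemma:doubling}, and both this and a Gehring exponent uniform in $r$ require a multiplicity bound for $v=u-A_0x$ on a fixed $\Omega'\Subset\Omega$, available from Lemma \ref{lemma:multbounds}). The genuine gap is the final step, where the contradiction is supposed to come from. The claim that a non-constant $K$-quasiregular map $v$ with $\D v(x_0)=0$ in the Lebesgue sense must have local index $k\ge 2$ at $x_0$, with the normalized blow-up converging to a non-affine ``quasi-polynomial of degree $k$'', is false. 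Quasiregular maps have no such rigid local structure: the quasiconformal homeomorphism $v(z)=z/\log(1/|z|)$ near the origin (or the radial stretching $z|z|^{\eps}$) has local index $1$, satisfies $\fint_{B_r}|\D v|^2\d x\to 0$, and for the first example the blow-up normalized by $\tau(r)$ converges precisely to the identity, i.e.\ to a non-constant affine map. In the Stoilow factorization $v=h\circ\phi$ all of the degeneracy can sit in the quasiconformal factor $\phi$, which need not be differentiable at $x_0$, so there is no polynomial leading-order behaviour to compute (and for $n\ge 3$ there is no substitute for this either). Hence ``blow-up limit affine and non-zero'' is perfectly compatible with $v$ being non-constant, and your argument never reaches a contradiction.

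More structurally, what your blow-up extracts is only the infinitesimal statement $\fint_{B_{2r}(x_0)}\dist(\D u,\Gamma)^n\d x=o(\tau(r)^n)$ at a single density point of $\{\D u\in\Gamma\}$; this is not self-contradictory and cannot by itself propagate to the whole of $\Omega$. The paper's proof works globally instead: for every ball $2B\subset\Omega'$ it combines the reverse H\"older and doubling inequalities for the quasiregular maps $u-Ax$, $A\in\Gamma$ (Lemmas \ref{lemma:revholder} and \ref{lemma:doubling}), with the rigidity estimate \eqref{eq:weakrig} applied on the same ball, to close a reverse H\"older inequality for $w=\dist(\D u,\Gamma)^n$ with non-increasing support, namely \eqref{eq:revholderdist}; the $A_\infty$-type dichotomy of Theorem \ref{thm:Ap} (either $w=0$ a.e.\ or $w>0$ a.e.) then concludes. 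To salvage your approach you would need a mechanism transporting information from the blow-up point to all points and all scales, which is exactly what the reverse H\"older inequality for the distance function encodes; the pointwise blow-up alone does not.
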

		
In fact, we can show the following quantitative version of Theorem \ref{thm:main}:
	
	\begin{theorem}\label{thm:qttive}
	Let $\Gamma\subset \R^{n\times n}$ satisfy a   rigidity estimate,  let $u \in W^{1,n}(\Omega,\R^n)$ satisfy \eqref{eq:incEgamma} and take $\Omega' \Subset \Omega$ open.  Suppose that there is $M > 0$  such that
\begin{equation}
\label{eq:bdddistance}
0 < M^{-1} \leq \int_{\Omega'} \dist(\D u, \Gamma)^n \d x \quad \text{ and } \quad
\int_\Omega |\D u|^n \d x \leq M.
\end{equation}	
Then, there exists $\e=\e(n,\Gamma,M,K,\Omega')>0$ and $C = C(n,\Gamma,M,K,\Omega')>0$ such that 
\begin{equation}\label{eq:unif}
\int_{\Omega'} \dist(\D u, \Gamma)^{-\e} \d x \le C.
\end{equation}
	\end{theorem}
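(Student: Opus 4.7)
The plan is to combine a reverse Hölder inequality for $\dist(\D u,\Gamma)$ with classical $A_\infty$ machinery. First, I would prove that for every ball $B$ with $4B\Subset\Omega$,
\[
\left(\fint_{B/4}\dist(\D u,\Gamma)^{n+\eta}\,\d x\right)^{\!1/(n+\eta)}\leq C \left(\fint_{B}\dist(\D u,\Gamma)^{n}\,\d x\right)^{\!1/n},
\]
where $\eta=\eta(n,K)>0$ is the Bojarski--Iwaniec higher integrability exponent for $K$-quasiregular maps and $C=C(n,K,C_\Gamma)$. The proof assembles three ingredients: the rigidity estimate \eqref{eq:weakrig} supplies some $A_B\in\Gamma$ for which $\fint_{B/2} |\D u-A_B|^n\,\d x\leq C_\Gamma \fint_B \dist(\D u,\Gamma)^n\,\d x$; since $u-A_B\cdot x$ is $K$-quasiregular by the definition of $\mc E_\Gamma$, the Bojarski--Iwaniec theorem yields $\left(\fint_{B/4}|\D u-A_B|^{n+\eta}\,\d x\right)^{1/(n+\eta)}\leq C\left(\fint_{B/2}|\D u-A_B|^{n}\,\d x\right)^{1/n}$; and the pointwise inequality $\dist(\D u,\Gamma)\leq|\D u-A_B|$ chains these into the claim.

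Since $\int_{\Omega'}\dist(\D u,\Gamma)^n\,\d x\geq M^{-1}>0$, the distance is not identically zero, and Theorem~\ref{thm:main} ensures $\dist(\D u,\Gamma)>0$ a.e.\ in $\Omega$. Combined with the reverse Hölder inequality above, Gehring's lemma and the Coifman--Fefferman characterization place $w:=\dist(\D u,\Gamma)^n$ in the Muckenhoupt class $A_\infty$, with characteristic depending only on $n,K,C_\Gamma$. Fixing a ball $B_0$ with $\Omega'\subset B_0$ and $4B_0\Subset\Omega$ (possibly by enlarging $\Omega$ slightly or by iterating a chain of balls), the quantitative $A_\infty$ property provides constants $C',\delta>0$ such that $|E|/|B_0|\leq C'(w(E)/w(B_0))^\delta$ for every measurable $E\subset B_0$. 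Applied with $E_t:=\{x\in\Omega':\dist(\D u,\Gamma)<t\}$, using $w(E_t)\leq t^n|E_t|$ and $w(B_0)\geq w(\Omega')\geq M^{-1}$, this yields the sublevel-set decay
\[
|E_t|\leq C'' t^{\alpha}, \qquad \alpha:=\frac{n\delta}{1-\delta},
\]
with $C''=C''(M,n,K,C_\Gamma,\Omega',\Omega)$. The layer-cake identity $\int_{\Omega'}\dist(\D u,\Gamma)^{-\e}\,\d x=\e\int_0^\infty t^{-\e-1}|E_t|\,\d t$ is then finite for every $\e\in(0,\alpha)$, giving \eqref{eq:unif}.

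The heart of the argument is the reverse Hölder inequality, which crucially combines the structural rigidity of $\Gamma$ with the quasiregularity of $u-A\cdot x$ for every $A\in\Gamma$. Once it is in place, the remaining steps are standard harmonic analysis, though one subtle point deserves emphasis: the qualitative conclusion $\dist(\D u,\Gamma)>0$ a.e.\ coming from Theorem~\ref{thm:main} is essential, because the reverse Hölder inequality alone would remain compatible with $\dist(\D u,\Gamma)$ vanishing on a set of positive measure, and it is exactly this non-vanishing that legitimates the $A_\infty$ implication and hence the integrability of negative powers. A further technical point is ensuring uniformity of all constants arising from Gehring self-improvement, the $A_\infty$ characteristic, and the passage $A_\infty\Rightarrow A_p$, across the entire class \eqref{eq:bdddistance}.
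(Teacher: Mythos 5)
There is a genuine gap, and it sits exactly at the point this paper is most careful about. Your reverse Hölder inequality has strictly \emph{increasing} supports ($\tfrac14 B$ on the left, $B$ on the right): both the Bojarski--Iwaniec/Caccioppoli higher integrability and the rigidity estimate \eqref{eq:weakrig} enlarge the ball, and no rearrangement of these two tools avoids that. A weak reverse Hölder inequality of this type does \emph{not} imply membership in $A_\infty$; the Coifman--Fefferman characterization requires the same ball (or cube) on both sides, and the two classes genuinely differ. Concretely, take $n=2$, $\Gamma=\{0\}$ (which satisfies \eqref{eq:weakrig} with $C_\Gamma=1$), $K=1$, and $u(z)=z^k$ on the unit disc: $w=\dist(\D u,\Gamma)^2\sim k^2|z|^{2k-2}$ satisfies your weak RHI with constants independent of $k$, yet its $A_\infty$ characteristic, and hence the admissible exponent of negative integrability, degenerates as $k\to\infty$. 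Your argument would produce $\e$ and the $A_\infty$ characteristic depending only on $(n,K,C_\Gamma)$, hence a bound on $\int_{\Omega'}|z|^{-\e(k-1)}\d x$ uniform in $k$, which is false; the normalization \eqref{eq:bdddistance} forces $k\lesssim M$, which is precisely why the true $\e$ must depend on $M$. Nor does invoking Theorem \ref{thm:main} repair this: a.e.\ positivity together with an increasing-support RHI still gives no quantitative $A_\infty$ constant (this is the same distinction the paper draws between its argument and the increasing-support inequality of Jääskeläinen).

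What is missing is the multiplicity control. The paper obtains the same-support inequality \eqref{eq:revholderdist} from Martio's reverse Hölder inequality (Lemma \ref{lemma:revholder}, going from exponent $n$ down to exponent $1$ on the \emph{same} ball) combined with the doubling Lemma \ref{lemma:doubling}; both constants depend on the multiplicity $N(u-\varphi,\cdot)$ of the quasiregular map $u-Ax$, and the essential new ingredient for the quantitative theorem is Proposition \ref{prop:unimultbounds}, a uniform bound on this multiplicity over the whole class defined by \eqref{eq:incEgamma} and \eqref{eq:bdddistance}, proved by a compactness argument. Only then does Theorem \ref{thm:Ap} apply with constants depending on $(n,K,\Gamma,M,\Omega')$. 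Your proposal contains no substitute for this step, and the $z^k$ example shows none can be avoided. A secondary, more easily fixable point: a ball $B_0\supset\Omega'$ with $4B_0\Subset\Omega$ need not exist, and once you pass to a covering by small balls you need a per-ball lower bound on $\int_B\dist(\D u,\Gamma)^n\d x$ uniform over the class; the paper secures this by a second compactness argument, namely \eqref{eq:frombelow}, whereas your single-big-ball device with $w(B_0)\ge M^{-1}$ does not supply it.
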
		

We now discuss applications of our results in specific examples, which in fact served as motivation for thinking about Question \ref{Q} in the first place.

\subsection{Elliptic curves and nonlinear Beltrami equations}\label{sec:curves}

The following is the main example we will discuss in this subsection:

\begin{example}[Elliptic curves]
We say that $\Gamma\subset \R^{2\times 2}$ is a \textit{$K$-elliptic curve} if it is the image of a closed, smooth curve $\gamma\colon [0,1] \to \R^{2\times 2}$ without self-intersections and satisfying
\begin{equation}\label{def:ell}
|\gamma(t) - \gamma(s)|^2 \le K\det(\gamma(t)-\gamma(s)), \quad \forall s,t \in [0,1].
\end{equation}
Elliptic curves were first introduced in \cite{Szekelyhidi2005} in the study of rank-one convex hulls of compact sets $K \subset \R^{2\times 2}$.  The \emph{separation properties} they provide have then led to striking achievements in the following years  \cite{Faraco2008,Kirchheim2008}.  More recently,  it was shown in  \cite{Lamy2023} that elliptic curves satisfy a   rigidity estimate, and thus our theorems apply in this setting. 
\end{example}
 
%
%
For instance, it is easy to see that  $\Gamma = \SO(2)$ is a 1-elliptic curve.  More generally,  any smooth curve $\Gamma$ contained in the conformal plane $\tp{CO}(2)\equiv \tp{span}(\SO(2))$ is elliptic. In this case,  a map $u$ solves \eqref{eq:incEgamma} if and only if it solves the \textit{nonlinear Beltrami equation}
\begin{equation}
\label{eq:belt}
\p_{\bar z} u = \mu \dist(\p_z u,\Gamma), \qquad \|\mu\|_{\infty}\leq \frac{K-1}{K+1} ,
\end{equation}
for some Beltrami coefficient $\mu$. In \eqref{eq:belt} we use the usual Wirtinger derivatives, which allow us to identify $\D u\in \R^{2\times 2}$ with $(\p_z u, \p_{\bar z} u)\in \C^2$, see Section \ref{sec:belt} for further details. 

Nonlinear Beltrami equations, with general nonlinearities,  have been studied extensively in recent years  \cite{Astala,Astala2019,Astala2017,Astala2012a,Astala2001}.  For equations with the structure in \eqref{eq:belt},  Theorem \ref{thm:main} yields the following novel conclusion: 

\begin{corollary}[Nonlinear Beltrami equations]
\label{cor:nonlinbeltr}
Let $\Gamma \subset \tp{CO}(2)$ be a $K$-elliptic curve and let $u \in  W_{\loc}^{1,2}(\Omega,\R^2)$ be a non-affine solution to \eqref{eq:belt}. Then $\dist(\p_z u,\Gamma)\neq 0$ a.e.\ in $\Omega$. 
\end{corollary}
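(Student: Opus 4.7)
The plan is to apply Theorem \ref{thm:main} directly to the $K$-elliptic curve $\Gamma$. The rigidity estimate of Definition \ref{def:main} is available for free, since (as noted in the paragraph introducing elliptic curves) \cite{Lamy2023} establishes it for every $K$-elliptic curve. Thus the only genuine tasks are (a) to recognize \eqref{eq:belt} as the differential inclusion \eqref{eq:incEgamma} for this $\Gamma$, and (b) to transfer the conclusion of Theorem \ref{thm:main} from $\dist(\D u, \Gamma)$ to $\dist(\p_z u, \Gamma)$.

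For step (a), I would use that $\Gamma \subset \tp{CO}(2)$, so each $A \in \Gamma$ is identified with a complex number $a$ acting by multiplication; in particular $\p_z(u - A) = \p_z u - a$ and $\p_{\bar z}(u-A) = \p_{\bar z} u$. Writing $k = (K-1)/(K+1)$, the definition \eqref{eq:Kqc} is equivalent to $u-A$ being $K$-quasiregular for every $A \in \Gamma$, i.e.
\[
|\p_{\bar z} u| \leq k\, |\p_z u - a| \quad \text{for every } a \in \Gamma.
\]
Taking the infimum over $a \in \Gamma$ gives $|\p_{\bar z} u| \leq k \dist(\p_z u, \Gamma)$, which is exactly \eqref{eq:belt} for a suitable Beltrami coefficient $\mu$ with $\|\mu\|_\infty \leq k$.

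For step (b), Theorem \ref{thm:main} provides the dichotomy that either $\dist(\D u, \Gamma) = 0$ a.e.\ (and $u$ is affine), or $\dist(\D u, \Gamma) > 0$ a.e. Since $u$ is assumed non-affine, we are in the latter case. To convert this into the desired statement I would use that, for $A \in \tp{CO}(2)$ corresponding to $a \in \C$, the singular values of $\D u - A$ are $|\p_z u - a| \pm |\p_{\bar z} u|$, so the operator norm satisfies $|\D u - A| = |\p_z u - a| + |\p_{\bar z} u|$, and therefore
\[
\dist(\D u, \Gamma) = \dist(\p_z u, \Gamma) + |\p_{\bar z} u|.
\]
On the set $\{\dist(\p_z u, \Gamma) = 0\}$, equation \eqref{eq:belt} forces $\p_{\bar z} u = 0$, whence $\dist(\D u, \Gamma) = 0$ there; by the previous step this set is null, so $\dist(\p_z u, \Gamma) > 0$ a.e., as claimed.

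I do not anticipate any real obstacle: once Theorem \ref{thm:main} is in place, the corollary is bookkeeping built around the dictionary between $2\times 2$ matrices and Wirtinger derivatives, together with the fact that conformal subtraction acts only on the $\C$-linear part of $\D u$.
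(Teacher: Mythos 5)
Your proposal is correct and follows essentially the same route as the paper: the paper's Lemma \ref{lem:tec} is exactly your step (a) (using \eqref{eq:confcoords} and the fact that conformal subtraction leaves $\p_{\bar z}u$ unchanged), and the conclusion then follows from the rigidity estimate of \cite{Lamy2023} together with Theorem \ref{thm:main}. Your step (b), converting $\dist(\D u,\Gamma)>0$ a.e.\ into $\dist(\p_z u,\Gamma)\neq 0$ a.e.\ via $\dist(\D u,\Gamma)=\dist(\p_z u,\Gamma)+|\p_{\bar z}u|$ and \eqref{eq:belt}, is left implicit in the paper, so spelling it out is a welcome but not substantively different addition.
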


For general nonlinear Beltrami equations, the analogue of Corollary \ref{cor:nonlinbeltr} is known only when $K<2$ \cite{Astala}. We can specialize Corollary \ref{cor:nonlinbeltr} to the case $\Gamma=\tp{span}(\Id)$; strictly speaking, this is not a closed curve, but the result holds true nonetheless. We then obtain:

\begin{corollary}[Reduced Beltrami equation]
\label{cor:jaask}
Any solution $u \in  W_{\loc}^{1,2}(\Omega,\R^2)$ to 
\begin{equation}
\label{eq:redbelt}
\partial_{\bar z}u = \mu \Im(\partial_zu ), \qquad \|\mu\|_\infty\leq \frac{K-1}{K+1},
\end{equation}
is either affine or satisfies $\Im(\partial_z u) \neq 0$ a.e.\ in $\Omega$.
\end{corollary}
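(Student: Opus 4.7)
The plan is to apply Theorem \ref{thm:main} directly with $\Gamma := \tp{span}(\Id) = \{a\Id : a\in\R\} \subset \R^{2\times 2}$. Under the Wirtinger identification $\D v\leftrightarrow (\partial_z v,\partial_{\bar z} v)$, the set $\Gamma$ becomes $\R\times\{0\} \subset \C^2$ and a direct computation gives $\dist(\D v,\Gamma)^2 \asymp (\Im\partial_z v)^2 + |\partial_{\bar z} v|^2$. The inclusion $\D u\in \mc{E}_\Gamma$ amounts to requiring that $u-az$ be $K$-quasiregular for every $a\in\R$, i.e.\ $|\partial_{\bar z}(u-az)| \leq \tfrac{K-1}{K+1}|\partial_z(u-az)|$; minimizing the right-hand side over $a\in\R$ (attained at $a=\Re\partial_z u$) yields the equivalent pointwise inequality $|\partial_{\bar z} u| \leq \tfrac{K-1}{K+1}|\Im\partial_z u|$, which is immediate from \eqref{eq:redbelt}.

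The main step is to verify that $\Gamma=\tp{span}(\Id)$ satisfies the rigidity estimate \eqref{eq:weakrig}. Since $\Gamma$ is a linear subspace, taking $a$ equal to the average of $\Re\partial_z v$ on $\tfrac12 B$ reduces the estimate to the Poincar\'e-type bound
\[
\int_{\frac12 B}|\Re\partial_z v - a|^2 \d x \;\leq\; C\int_B \bigl[(\Im\partial_z v)^2 + |\partial_{\bar z} v|^2\bigr] \d x.
\]
This is an overdetermined first-order elliptic estimate for the operator $P v := (\Im\partial_z v,\partial_{\bar z} v)$: a short symbol computation shows that the principal symbol of $P$ is injective for every $\xi\in\R^2\setminus\{0\}$, and its $W^{1,2}_{\loc}$-kernel is exactly the space of affine maps with gradient in $\tp{span}(\Id)$ (indeed, $\partial_{\bar z} v=0$ forces $v$ to be holomorphic, and $\Im\partial_z v=0$ then forces $\partial_z v$ to be a real-valued holomorphic function, hence constant). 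A standard Ne\v cas/Nirenberg-type interior estimate for such overdetermined elliptic systems then yields the desired bound; alternatively, one may observe that the commutativity of mixed partials expresses $\nabla\Re\partial_z v$ as a linear combination of first-order derivatives of $(\Im\partial_z v,\partial_{\bar z} v)$, from which a Calder\'on-Zygmund argument produces the same estimate.

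Once both ingredients are in place, Theorem \ref{thm:main} gives the dichotomy: either $\dist(\D u,\Gamma)=0$ a.e., in which case $u$ is affine; or $\dist(\D u,\Gamma)>0$ a.e., which by the reduced Beltrami equation and $\|\mu\|_\infty<1$ is equivalent to $\Im\partial_z u \neq 0$ a.e. The main obstacle is the second step: $\tp{span}(\Id)$ is unbounded and lies on the singular locus $\{\det=0\}$ of the quasiconformality condition, so the rigidity estimate from \cite{Lamy2023} (formulated for closed elliptic curves) does not apply verbatim. The overdetermined ellipticity of $P$ is precisely what allows one to bypass this degeneracy and recover a rigidity estimate for the line $\tp{span}(\Id)$.
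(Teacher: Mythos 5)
Your proof is correct and follows essentially the same route as the paper: there, too, Corollary \ref{cor:jaask} is obtained by applying Theorem \ref{thm:main} with $\Gamma=\tp{span}(\Id)$, after noting $\dist(\D u,\Gamma)=|\Im(\partial_z u)|+|\partial_{\bar z}u|=(1-|\mu|)\,|\Im(\partial_z u)|$ so that the reduced equation is exactly the inclusion $\D u\in\mc E_\Gamma$. The only difference is in the justification of the rigidity estimate for the line $\R\,\Id$: you re-derive it as an overdetermined elliptic (Ne\v cas/Calder\'on--Zygmund) estimate, while the paper simply remarks that this linear estimate is a corollary of the classical Korn inequality, as used in \cite[Step 3, Proposition 3.1]{Friesecke2002} --- which is proved by the same mechanism you describe.
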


Equation \eqref{eq:redbelt} is known as the \textit{reduced Beltrami equation}, and one can reduce general linear elliptic systems in the plane  to it \cite[\S 6]{Astala2009}. Corollary \ref{cor:jaask} was first established  in \cite{Iwaniec2006} in the case where $u$ is a global homeomorphism, and under the additional assumption that $\|\mu\|_{L^\infty} < 1/2$. Next, in \cite{Alessandrini2009a},  this last condition on $\|\mu\|_{L^\infty}$ was removed, see also \cite{Astala2009a}.  The general case of Corollary \ref{cor:jaask}, without any homeomorphicity assumptions,  was finally obtained in \cite{Jaaskelainen2013}. 

We note that although Corollary \ref{cor:jaask} is known from \cite{Jaaskelainen2013} the proof we present here is much simpler.  In \cite{Jaaskelainen2013}, the author establishes a reverse H\"older inequality with \textit{increasing supports} for $|\Im(\partial_z u)|$, which implies that zeros of $|\Im(\partial_z u)|$ have infinite order; the author then performs a delicate analysis to rule out this possibility.  In the proof of Theorem \ref{thm:main} we establish directly a reverse H\"older inequality with the same support, from which the conclusion follows.

\subsection{SO(2) and the Monge--Amp\`ere equation}

In Section \ref{sec:curves} we discussed consequences of Theorem \ref{thm:main} for general elliptic curves. We now specialize to the case $\Gamma =\SO(2)$ and draw a connection to the Monge--Amp\`ere equation.  
For a general introduction to this equation, we refer the reader to  \cite{Figalli2017}.

If $O\subset \R^n$ denotes a convex set, we consider convex functions $\varphi \colon O \to \R$ such that
\begin{equation}\label{eq:MA}
\begin{cases}
\lambda^{-1} \d x \le \mu_\varphi \le \lambda \d  x,\\
\varphi = 0 \text{ on }\partial O,
\end{cases}
\end{equation}
where $\mu_\varphi$ denotes the \textit{Monge--Amp\`ere measure} of $\varphi$ and $\lambda>1$.   We recall that, whenever $\varphi\in W^{2,n}$, we have $\mu_\varphi=\det \D^2 \varphi.$ There is a unique convex solution to \eqref{eq:MA} and much attention has been given to understanding whether this solution  belongs to $W^{2,p}_{\loc}(O)$ for some $p \ge 1$.  A first perturbative result was obtained in \cite{Caffarelli1990}, see also \cite{Wang1995},  and then in \cite{DePhilippis2013} it was shown that
\begin{equation}\label{p:p1}
\varphi \in W^{2,1}_{\loc}(O),
\end{equation}
with a uniform modulus of equi-integrability of the second derivatives.  This result was then strengthened in \cite{DePhilippis2013a,Schmidt2013}, where it was shown that
\begin{equation}\label{p:p2}
\varphi\in W^{2,1+ \varepsilon}_{\loc}(O), \text{ for some } \varepsilon > 0 \text{ independent of $\varphi$}.
\end{equation}
We also refer the reader to \cite{Savin2020} for a global version of \eqref{p:p2} in general domains.

We will focus here on the case $n = 2$. A crucial idea in our strategy, which was introduced in \cite{Ambrosio2011},  is to apply Minty's correspondence between monotone maps and 1-Lipschitz maps. Precisely, if  $\varphi$ is the convex solution of \eqref{eq:MA} when $n=2$ then we consider the maps
$$\Phi_1(w) \equiv  \frac{\D\varphi(w) - w}{\sqrt{2}} \quad \text{ and }\quad  \Phi_2(w) \equiv \frac{\D\varphi(w) + w}{\sqrt{2}}.$$
Since $\varphi$ is convex, $\Phi_2$ is a homeomorphism.  There are now two main points,  both essentially due to  \cite{Ambrosio2011}. The first one, which we precise in Lemma \ref{lemma:qcenvelope}, is that if we set $v \equiv \overline{\Phi_1}\circ \Phi_2^{-1}$, where the bar denotes complex conjugation,  then 
$$ \D v \in\mc E_{\SO(2)}\cap \{A\in \R^{2\times 2}:|A|\leq 1\} \quad \text{ a.e.\ in } \Omega \equiv \Phi_2^{-1}(O);$$
here, the corresponding constant $K$ from \eqref{eq:Kqc} is precisely $K=\lambda$.  
The second point is that
$$\eqref{p:p1} \text{ when } n = 2 \quad \iff \quad  |\{\D v \in \SO(2)\}|=0. $$
It is easy to see that we cannot have $\dist(\D v, \SO(2))=0$ a.e.\ in $\Omega$, and thus Theorem \ref{thm:main} already gives a new proof of \eqref{p:p1} when $n=2$. In fact, as we show in Lemma \ref{lem:linked}, one has the stronger relationship
$$|\D^2 \varphi(x)|\sim_\lambda \frac{1}{\dist(\D v(\Phi_2(x)), \SO(2))}\quad \text{for a.e.\ } x \text{ in } \Omega,$$
and so the higher integrability of $\D^2 \varphi$ is linked precisely to the decay of the distance of $\D v$ to $\SO(2)$.
Thus, applying Theorem \ref{thm:qttive}, we in fact recover \eqref{p:p2} when $n=2$:

\begin{corollary}\label{cor:MA}
Let  $\varphi\colon O  \to \R$ be the unique convex solution to \eqref{eq:MA}.
Then $\varphi \in W^{2,1 + \varepsilon}_{\loc}(O)$ for some $\varepsilon = \varepsilon(\lambda)>0$.
\end{corollary}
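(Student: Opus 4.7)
The plan is to apply Theorem \ref{thm:qttive} to the map $v \equiv \overline{\Phi_1} \circ \Phi_2^{-1}$ on the pair $\Omega' \Subset \Omega$, where $\Omega \equiv \Phi_2^{-1}(O)$ and $\Omega' \equiv \Phi_2^{-1}(O')$ for a fixed $O' \Subset O$, and then transfer the resulting higher integrability back to $\varphi$ via the change of variables $y = \Phi_2(x)$.

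I would first verify the hypotheses of Theorem \ref{thm:qttive}. By Lemma \ref{lemma:qcenvelope}, $\D v \in \mc{E}_{\SO(2)}$ a.e.\ in $\Omega$ with $|\D v| \le 1$, so the upper $L^{2}$ bound in \eqref{eq:bdddistance} holds with constant $|\Omega|$. For the lower bound on $\int_{\Omega'} \dist(\D v,\SO(2))^{2}\d y$, I would invoke the dichotomy of Theorem \ref{thm:main}: either $v$ is affine, in which case unwinding Minty's correspondence forces $\D\varphi$ to be affine and hence $\varphi$ to be a quadratic polynomial, so the corollary is trivial; or $\dist(\D v,\SO(2))>0$ a.e.\ in $\Omega$, whence $\int_{\Omega'}\dist(\D v,\SO(2))^{2}\d y>0$. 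In the nontrivial case we thereby obtain a (possibly $\varphi$-dependent) constant $M>0$ satisfying \eqref{eq:bdddistance}.

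Applying Theorem \ref{thm:qttive} to $v$ then produces $\varepsilon=\varepsilon(\lambda,\Omega',M)>0$ and $C>0$ with
\[
\int_{\Omega'}\dist(\D v,\SO(2))^{-\varepsilon}\d y\le C.
\]
To push this to a bound on $\D^{2}\varphi$ on $O'$, I change variables by $y=\Phi_{2}(x)$. Since $\varphi$ is convex, $\D\Phi_{2}=(\D^{2}\varphi+\Id)/\sqrt{2}$ is positive semidefinite a.e., and its determinant satisfies $\det\D\Phi_{2}\ge (1+|\D^{2}\varphi|)/2$ (both eigenvalues of $\D^{2}\varphi$ being nonnegative). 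Combining this with the equivalence $|\D^{2}\varphi(x)|\sim_{\lambda}1/\dist(\D v(\Phi_{2}(x)),\SO(2))$ from Lemma \ref{lem:linked} yields the pointwise estimate
\[
|\D^{2}\varphi(x)|^{1+\varepsilon}\lesssim_{\lambda}\bigl(1+\dist(\D v(\Phi_{2}(x)),\SO(2))^{-\varepsilon}\bigr)\,\det\D\Phi_{2}(x).
\]
The area formula for the homeomorphism $\Phi_{2}$ then gives $\int_{O'}|\D^{2}\varphi|^{1+\varepsilon}\d x<\infty$, which is the claim.

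The main technical obstacle is making this final change of variables rigorous, as $\varphi$ is only a priori known to belong to $W^{2,1}_{\loc}(O)$ by \eqref{p:p1}. One appeals to the area formula for $BV$ or Sobolev homeomorphisms, exploiting that $\Phi_{2}$ is monotone with Jacobian bounded below by $1/2$, and that its inverse is controlled thanks to the lower Monge--Amp\`ere bound $\det\D^{2}\varphi\ge \lambda^{-1}$. The monotone-map framework developed in \cite{Ambrosio2011} and recalled earlier provides the appropriate setting in which all of the above manipulations are justified.
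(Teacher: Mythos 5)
There is a genuine gap: the statement asserts $\varepsilon=\varepsilon(\lambda)$, i.e.\ an exponent (and, implicitly, local bounds) \emph{uniform in $\varphi$}, as in \eqref{p:p2}, but your argument cannot deliver this. You obtain the lower bound in \eqref{eq:bdddistance} from the soft dichotomy of Theorem \ref{thm:main}, which yields only a $\varphi$-dependent constant $M$, and then $\Omega'=\Phi_2^{-1}(O')$ is itself a $\varphi$-dependent domain; consequently the $\varepsilon$ produced by Theorem \ref{thm:qttive} depends on the particular solution. What you prove is that each non-quadratic solution lies in $W^{2,1+\varepsilon}_{\loc}$ for \emph{some} $\varepsilon=\varepsilon(\varphi)>0$, which is strictly weaker than the corollary. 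The paper's proof is organized precisely around removing this dependence: it proves the quantitative Theorem \ref{thm:MA} with constants depending only on $\lambda$, by (a) showing that $u-S$ is a homeomorphism for every singular matrix $S$ (Lemma \ref{lem:uShomeo}), so the multiplicity entering the reverse H\"older constant is identically $1$ rather than merely finite, and (b) proving the uniform lower bound \eqref{eq:beb} on $\int_B\dist(\D u,\mc S)$ over a controlled family of balls, using the section geometry (Proposition \ref{prop:sections}, Lemma \ref{lemma:firstclaim}, Lemma \ref{lemma:secondclaim}) and Caffarelli's universal $C^{1,\alpha}$ estimate (Theorem \ref{thm:C1a}), in place of your compactness-free but non-uniform dichotomy.

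This uniformity issue also undermines the way you propose to handle the second difficulty, the change of variables at low regularity. The paper sidesteps all measure-theoretic subtleties by reducing to an \emph{a priori} estimate for smooth, strictly convex solutions of \eqref{MAbeq} (mollifying the right-hand side and passing to the limit); but that reduction is only legitimate because the resulting bound \eqref{eq:1eps} depends solely on $\lambda$, so it survives the approximation. With a $\varphi$-dependent $\varepsilon$ and constant you cannot argue this way, and you are then forced to justify directly the identity of Lemma \ref{lem:linked} and the area formula for the merely $W^{2,1}_{\loc}$ solution (via \eqref{p:p1}), which your last paragraph acknowledges but does not carry out. So either you must add the quantitative ingredients (uniform multiplicity bound and uniform nondegeneracy as in \eqref{eq:beb}) to make the constants depend only on $\lambda$ and then run the smooth a priori argument, or you must supply the omitted low-regularity change-of-variables argument — and even then you would only obtain the weaker, non-uniform conclusion.
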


\subsection{Non-examples}

We conclude this introduction by briefly mentioning further examples of sets to which either our theorems do not apply, or where they do apply but do not provide any  information. 

\begin{example}[Isometries and Conformal Maps]
If $\Gamma=\tp{SO}(n)$, then Liouville's Theorem asserts that rotations are the only exact solutions $v$ to $\D v \in \Gamma$ a.e.\ in $\Omega$. Moreover, by \cite{Friesecke2002}, $\tp{SO}(n)$ satisfies a rigidity estimate (strictly speaking,  in \cite{Friesecke2002} the estimate is only stated in $L^2$, but see also \cite[\S 2.4]{Conti2006} for the $L^p$-estimate). 
However, if $n \ge 3$,  then
$$\SO(n) \cap \mathcal{E}_{\SO(n)} = \emptyset,$$  since for every $X\in \SO(n)$ we can find some $Y \in \SO(n), Y \neq X,$ such that $\det(X-Y) = 0.$
Therefore,  in this case, Question \ref{Q} becomes meaningless. The same of course holds for any subset $\Gamma$ of the conformal matrices $\tp{CO}(n) \equiv \{A: A = tR, t\ge 0, R \in \SO(n)\}$ invariant under the natural $\SO(n)$-action,  such as 
\[
\Gamma= [m,M]\tp{SO}(n) \quad \text{ for } 0<m<M<\infty.
\]
For such sets,   rigidity estimates were shown in \cite{Faraco2005}.
\end{example} 

\begin{example}[General elliptic linear spaces]
If $\Gamma\subset\R^{n\times n}$ is a linear subspace without rank-one matrices, the rigidity estimate \eqref{eq:weakrig} may not hold, as solutions to $\D v \in \Gamma$ do not need to be affine. For instance, with 
$\Gamma = \tp{CO}(2)$,  the differential inclusion $\D v\in \Gamma$ simply says that $v$ is a holomorphic function. It is also interesting to notice that $\mathcal{E}_{\tp{CO}(2)} = \tp{CO}(2)$, and hence our results would not give any new information anyway. 
\end{example}


%
%

\subsection*{Outline} 
The paper is organized as follows. In Section \ref{sec:PAN}, we will introduce the notation and the general tools we will use in the paper, more precisely  quasiregular mappings and Muckenhoupt weights. In Sections \ref{sec:qualita} and in Section \ref{sec:quantita} we will show Theorems \ref{thm:main} and \ref{thm:qttive} respectively. 
In Section \ref{sec:belt} we will see how Corollary \ref{cor:jaask} follows from Theorem \ref{thm:main}.  In Section \ref{sec:MA} we review some useful results on the Monge--Amp\`ere equation, and show how Corollary \ref{cor:MA} follows from Theorem \ref{thm:qttive}.  Finally,  we included an appendix which contains the proof of a technical result concerning Muckenhoupt weights. 

\subsection*{Acknowledgements}
GDP has been partially supported by the NSF grant DMS 2055686 and by the Simons Foundation.  AG acknowledges the support of Dr. Max R\"ossler, the Walter Haefner Foundation and the ETH Z\"urich Foundation. AG and RT thank Daniel Faraco for interesting discussions concerning nonlinear Beltrami equations, and they thank the Hausdorff Institute for Mathematics (HIM) in Bonn,
funded by the Deutsche Forschungsgemeinschaft (DFG, German Research Foundation) under Germany's Excellence Strategy – EXC-2047/1 – 390685813,  and the organizers of the Trimester Program ``Mathematics for Complex Materials'' (03/01/2023-14/04/2023, HIM, Bonn) for their hospitality during the period where some of this work was conducted.

\section{Preliminaries and Notation}\label{sec:PAN}

 Throughout the paper,  $(a,b)$ denotes the standard scalar product between vectors $a,b \in \R^n$ and $\langle A,B\rangle$ the Hilbert-Schmidt scalar product between matrices $A, B\in \R^{n\times m}$. Similarly, the Euclidean norm of vectors $a \in \R^n$ is denoted by $|a|$ and the operator norm of matrices $A \in \R^{n\times m}$ is denoted by $|A|$. If $A \in \R^{n\times n}$, then $\det(A),\tr(A)$ and $A^T$ denote its determinant, trace and transpose respectively.  $\Sym(n)$ denotes the space of symmetric matrices in $\R^{n\times n}$ and, for $A\in  \Sym(n)$, we let $\cof(A)$ be the matrix defined as
\[
\cof(A)_{ij} = (-1)^{i + j}\det(M_{ij}(A))= (-1)^{i + j}\det(M_{ji}(A)),
\]
where $M_{ij}(A)$ denotes the $(n-1)\times(n-1)$ submatrix of $A$ obtained by eliminating from $A$ the $i$-th row and the $j$-th column. In particular, $\cof(A)$ satisfies
\[
A\cof(A) = \cof(A)A = \det(A)\Id.
\]

For a (Lebesgue) measurable set $E$, $|E|$ denotes its Lebesgue measure. For any set $D \subset \R^n$, $\overline{D}$ denotes its closure and $\partial D$ its topological boundary. $B_r(x)$ denotes the ball of $\R^n$ centered at $x$ with radius $r > 0$. If $B = B_r(x)$ for some $r,x$, then $\lambda B \equiv B_{\lambda r}(x)$ for any $\lambda > 0$.  We will always use $\Omega\subset \R^n$ to denote an open, nonempty,  connected set. Furthermore, for another open set $\Omega'$, $\Omega' \Subset \Omega$ means that $\overline{\Omega'} \subset \Omega$.

Given a set $E$ with $0 < |E|< +\infty$ and $f \in L^1_{\loc}(\R^n)$, we let
\[
\fint_{E}f\d x \equiv \frac{1}{|E|}\int_{E}f\d x
\]
be the average of $f$ over $E$.

Given positive functions $h$ and $g$, we will sometimes use the notation $h \lesssim g$ to say that there exists $C > 0$ such that $h \le Cg$. Analogously, $h \sim g$ means $h \lesssim g \lesssim h$. If we want to make the dependence of the constants explicit on some parameters $a,b$ etc., we will write $\lesssim_{a,b}$ and $\sim_{a,b}$.

\subsection{Quasiregular mappings}

We recall that a map $f\in W^{1,n}_\loc(\Omega,\R^n)$ is said to be \textit{$K$-quasiregular} if it satisfies the distortion inequality
\begin{equation}\label{eq:qr}
|\D f(x)|^n \leq K \det \D f(x) \quad \text{for a.e.\ } x\in \Omega.
\end{equation}
A quasiregular homeomorphism is said to be \textit{quasiconformal}.
Quasiregular maps are a far-reaching generalization of holomorphic functions, and we refer the reader to the monographs \cite{Iwaniec2001,Reshetnyak1989,Rickman1993} for a wealth of information on the topic. We will now recall the main definitions and results from this theory that we will need in the proofs of Theorems \ref{thm:main}-\ref{thm:qttive}.

The following deep topological result is essentially due to Reshetnyak, see e.g.\ \cite{Reshetnyak1989} or \cite[Theorem 16.12.1]{Iwaniec2001}:  

\begin{theorem}\label{thm:reshet}
Any $K$-quasiregular map $f \in W^{1,n}_{\loc}(\Omega,\R^n)$ is continuous, with modulus of continuity over $
\Omega'\Subset \Omega$ depending only on $K$ and on $\|\D f\|_{L^n(\Omega',\R^n)}$.  Moreover, if $f$ is non-constant then it is open and discrete.
\end{theorem}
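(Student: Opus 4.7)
The plan is to establish the continuity with modulus first, and then deduce openness and discreteness via Brouwer degree theory. For continuity I would follow the now-classical path: Caccioppoli inequality $\to$ reverse H\"older $\to$ Gehring's lemma for higher integrability of $\D f$ $\to$ Morrey embedding.

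Concretely, the starting point is a Caccioppoli-type bound
\[
\int_B \eta^n |\D f|^n \d x \leq C(n,K)\int_B |\D \eta|^n |f-c|^n \d x, \qquad \eta \in C_c^\infty(B),\; c \in \R^n,
\]
which I would obtain by writing the Jacobian as a divergence via the Piola identity,
$\det(\D f)=\frac{1}{n}\ddiv\bigl(\cof(\D f)^T(f-c)\bigr)$, testing against $\eta^n$, invoking the pointwise bound $|\cof(\D f)|\leq |\D f|^{n-1}$, and absorbing the gradient term using the distortion estimate $|\D f|^n\leq K\det \D f$ together with Young's inequality. With $c=(f)_{2B}$, Sobolev--Poincar\'e at a subcritical exponent $q\in(n/2,n)$ then turns this into a reverse H\"older inequality
\[
\left(\fint_B|\D f|^n \d x\right)^{1/n} \leq C\left(\fint_{2B} |\D f|^q \d x\right)^{1/q},
\]
which Gehring's lemma upgrades to $\D f \in L^{n+\delta}_\loc(\Omega)$ for some $\delta=\delta(n,K)>0$, with a quantitative bound in terms of $\|\D f\|_{L^n(\Omega',\R^n)}$. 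Morrey's embedding then produces a continuous representative with a local H\"older modulus of exponent $\delta/(n+\delta)$.

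For the topological statements in the non-constant case, I would deploy Brouwer degree theory on this continuous representative. If $\det \D f=0$ a.e.\ then $\D f=0$ a.e.\ by the distortion bound, forcing $f$ to be constant; so in the non-constant case the Jacobian is non-negative a.e.\ and strictly positive on a set of positive measure. The area formula
\[
\int_{B_r(x_0)}\det \D f \d x = \int_{\R^n}\deg(f,B_r(x_0),y)\d y,
\]
together with the integer-valued non-negativity of $\deg(f,B_r(x_0),\cdot)$, then ensures that the local topological index $i(x_0,f)$ is a positive integer for all sufficiently small $r$ with $f(x_0)\notin f(\p B_r(x_0))$. Openness is a direct consequence via homotopy invariance of the degree, and discreteness follows from the bound $\#\{f^{-1}(y)\cap B_r(x_0)\}\leq i(x_0,f)$ valid for $y$ sufficiently close to $f(x_0)$.

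The main obstacle is this last step: showing that the local degree genuinely controls the cardinality of the preimage requires ruling out, for a $K$-quasiregular map, the possibility that some open set collapses to a single point, and establishing the monotonicity of the local index along nested balls. This is precisely the delicate topological heart of Reshetnyak's theorem, resting on capacitary/modulus-of-curve-families estimates specific to quasiregular maps; everything up to and including the H\"older modulus is by contrast standard Gehring-type analysis.
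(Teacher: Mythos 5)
The paper does not actually prove this statement: it is quoted as a deep known theorem, with the proof delegated to \cite{Reshetnyak1989} and to \cite[Theorem 16.12.1]{Iwaniec2001}, so the comparison is against those classical arguments. Your analytic half is fine and is the standard route: the Caccioppoli inequality via the Piola identity, a reverse H\"older inequality at a subcritical exponent, Gehring's lemma giving $\D f\in L^{n+\delta}_{\loc}$ with $\delta=\delta(n,K)$, and Morrey embedding, which indeed yields a H\"older modulus on $\Omega'\Subset\Omega$ controlled by $K$ and $\|\D f\|_{L^n}$.

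The topological half, however, contains a genuine gap, which you only partially acknowledge. As written, the degree-theoretic scheme is circular: to define the local index $i(x_0,f)$ you need a radius $r$ with $f(x_0)\notin f(\p B_r(x_0))$, i.e.\ you need $x_0$ to be an isolated point of the fiber $f^{-1}(f(x_0))$ --- essentially the discreteness you are trying to establish. Similarly, strict positivity of $\deg(f,B_r(x_0),y)$ for $y$ near $f(x_0)$ requires ruling out that $f$ is constant on some open subset while non-constant in $\Omega$ (your argument ``$\det \D f=0$ a.e.\ $\Rightarrow$ $f$ constant'' is global, not local), and the inequality $\#\bigl(f^{-1}(y)\cap B_r(x_0)\bigr)\leq i(x_0,f)$ is only available once one knows the fiber near $x_0$ reduces to $\{x_0\}$. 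The missing ingredient is exactly the hard core of Reshetnyak's theorem: for non-constant quasiregular $f$, the sets $f^{-1}(y)$ contain no continuum (they have conformal capacity zero, hence are totally disconnected), which is proved via nonlinear potential theory --- the components of $f$, and functions such as $\log|f-y|$, solve or subsolve an $\mc A$-harmonic equation of $n$-Laplacian type, and one uses capacity or modulus-of-curve-family estimates --- before any degree argument can be run. Since openness and discreteness are precisely what the paper later uses (Lemma \ref{lemma:multbounds}, Lemma \ref{lemma:qrnonconst}, Proposition \ref{prop:unimultbounds}), this step cannot be left as an ``acknowledged obstacle'': either supply the potential-theoretic argument or, as the paper does, cite \cite{Reshetnyak1989} or \cite[Theorem 16.12.1]{Iwaniec2001} for the full statement.
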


Given a continuous map $f\colon \Omega\to \R^n$ and a set $\Omega'\Subset \Omega$, we write
\begin{equation}
\label{eq:defmult}
N(f,\Omega',y)\equiv \#(f^{-1}(y) \cap \Omega'),\qquad
N(f,\Omega')\equiv \sup_{y\in \R^n} N(f,\Omega',y).
\end{equation}
For a discrete and open continuous map (in particular, for a quasiregular map), the latter quantity is locally bounded \cite[I.\ Proposition 4.10(3)]{Rickman1993}:
\begin{lemma}[Bounded multiplicity]\label{lemma:multbounds}
Let $f\in C^0(\Omega, \R^n)$ be a discrete and open mapping. For any $\Omega'\Subset\Omega$ we have $N(f,\Omega')<\infty.$
\end{lemma}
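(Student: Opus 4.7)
The plan is to reduce to a local bound and then conclude by compactness of $\overline{\Omega'}$.

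For the local step, given $x \in \overline{\Omega'}$, I would first use the discreteness of $f^{-1}(f(x))$ to find an open neighborhood $V_x$ of $x$ with $\overline{V_x} \subset \Omega$ compact and $f^{-1}(f(x)) \cap \overline{V_x} = \{x\}$: start from any ball around $x$ with closure in $\Omega$, observe that its intersection with the discrete closed set $f^{-1}(f(x))$ is finite, and shrink it to exclude the finitely many other preimages. Then I would appeal to the theory of \emph{normal neighborhoods} for discrete open maps (see \cite[I.4]{Rickman1993}): after possibly shrinking $V_x$ further, $f|_{V_x}$ is a branched covering of $f(V_x)$ of some finite topological degree $i(x,f)$, and in particular
\[
|f^{-1}(y) \cap V_x| \le i(x,f) < \infty \quad \text{for every } y \in \R^n.
\]

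For the global step, $\{V_x\}_{x \in \overline{\Omega'}}$ is an open cover of the compact set $\overline{\Omega'}$. Extracting a finite subcover $V_{x_1}, \dots, V_{x_N}$, for every $y \in \R^n$ we bound
\[
N(f,\Omega',y) \le \sum_{j=1}^N |f^{-1}(y) \cap V_{x_j}| \le \sum_{j=1}^N i(x_j,f),
\]
and taking the supremum over $y$ yields $N(f,\Omega') < \infty$.

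The main obstacle is the existence of the local index in the first step, a nontrivial topological fact about discrete open maps between open subsets of $\R^n$ that rests ultimately on invariance of domain. Once that is granted, the globalization is just a straightforward compactness calculation, and one should expect the bound on $N(f,\Omega')$ to depend only on $f$ and $\Omega'$.
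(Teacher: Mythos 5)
Your argument is correct and is essentially the route the paper takes: the paper simply cites Rickman's Proposition I.4.10(3), which is precisely the normal-neighborhood/local-index fact you invoke, and your compactness globalization is the standard (and routine) remaining step. No gaps worth noting, beyond the observation that the local bound should be stated with $|i(x,f)|$ since discrete open maps may be sense-reversing.
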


We now move towards more analytic results. 
The next lemma, which is due to Martio \cite{Martio1974},  gives a fundamental analytic property of the derivatives of quasiregular maps. This result was extended to mappings of finite distortion in \cite{Hencl2007}.  In the quasiconformal case the lemma is a classical result of Gehring \cite{Gehring1973a}.

\begin{lemma}[Reverse H\"older inequality]\label{lemma:revholder}
Let $f\in W^{1,n}_\loc(\Omega, \R^n)$ be a $K$-quasiregular map. For all balls with $2B\subset \Omega$, we have
$$\left(\fint_{B} |\D f|^n \right)^{\frac 1 n} \d x\leq C(n,K,N(f,\Omega)) \fint_B |\D f| \d x.$$
Here, $C(n,K,N(f,\Omega)) = +\infty$ if $N(f,\Omega) = +\infty$.
\end{lemma}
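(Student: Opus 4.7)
The plan is to combine three classical ingredients: the distortion inequality coming from quasiregularity, the area formula together with the bounded-multiplicity Lemma \ref{lemma:multbounds}, and a sharp oscillation estimate specific to quasiregular maps.

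First, quasiregularity $|\D f|^n \leq K J_f$, with $J_f := \det \D f$, reduces the claim (after integrating on $B$) to showing
\[
\fint_B J_f \d x \leq C(n, K, N(f, \Omega)) \left(\fint_B |\D f| \d x\right)^n.
\]
Next, Theorem \ref{thm:reshet} tells us that $f$ is continuous, open, and discrete, so it enjoys Lusin's $N$-property, and the area formula together with Lemma \ref{lemma:multbounds} yields
\[
\int_B J_f \d x = \int_{\R^n} N(f, B, y) \d y \leq N(f, \Omega) \cdot |f(B)|.
\]
This is the only step where the multiplicity enters the constant, and explains the degeneration when $N(f, \Omega) = +\infty$. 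Using continuity once more, $f(B)$ is contained in a Euclidean ball of radius $\tp{osc}_B f$, so $|f(B)| \leq \omega_n (\tp{osc}_B f)^n$, and the claim reduces to establishing an oscillation estimate of the form
\[
\tp{osc}_B f \leq C(n, K, N(f, \Omega)) \, r \fint_{\alpha B} |\D f| \d x
\]
for some $\alpha \in (1, 2]$, where $r$ denotes the radius of $B$. The inequality stated in the lemma then follows, possibly after a standard Calderón--Zygmund type covering argument to upgrade the increasing-support version to the same-support form.

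The main obstacle is precisely this oscillation estimate. For a generic $W^{1,n}$ map, Morrey's embedding only provides $\tp{osc}_B f \lesssim r (\fint_{\alpha B} |\D f|^n)^{1/n}$, which substituted above would yield a trivial bound. The improvement to an $L^1$-gradient control is a genuine feature of quasiregular maps and is classically obtained via conformal-modulus and capacity estimates exploiting the Loewner property of $\R^n$, together with the openness and discreteness of $f$ provided by Theorem \ref{thm:reshet}. This is Gehring's capacity argument \cite{Gehring1973a} in the quasiconformal case, extended by Martio \cite{Martio1974} to the quasiregular setting through the multiplicity function; since the result is classical, we would invoke it rather than reproduce the proof in full.
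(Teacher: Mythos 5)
The paper itself does not prove this lemma: it is quoted as a classical result of Martio \cite{Martio1974} (Gehring \cite{Gehring1973a} in the quasiconformal case, \cite{Hencl2007} for finite distortion), so there is no internal argument to measure you against, and simply citing those references for the full statement would have been acceptable. Your preliminary reductions are fine: the distortion inequality reduces the claim to bounding $\fint_B \det \D f$; the area formula (only the inequality $\int_B \det \D f \,\d x \le \int_{\R^n} N(f,B,y)\,\d y$ is needed, which holds for continuous $W^{1,n}$ maps with nonnegative Jacobian, so Lusin's condition (N) is not even required) together with Lemma \ref{lemma:multbounds} gives $\int_B \det \D f \,\d x \le N(f,\Omega)\,|f(B)|$; and $|f(B)| \lesssim (\operatorname{osc}_B f)^n$ is clear.

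The gap is exactly the step you call the main obstacle. The estimate $\operatorname{osc}_B f \lesssim_{n,K,N} r \fint_{\alpha B}|\D f|\,\d x$ is not, in this $L^1$ form, a separately quotable classical statement: given your first three steps it is equivalent (up to enlarging balls and the covering argument you mention) to the reverse H\"older inequality being proved, so invoking it from the literature is in effect invoking the lemma itself. Moreover, the capacity/modulus machinery you point to does not deliver it directly. The Loewner/capacity arguments give the pointwise distortion estimates, e.g.\ $\diam f(B) \lesssim |f(2B)|^{1/n}$ with multiplicity-dependent constants, which only yield $L^n$ control of the gradient — the trivial bound you already dismissed. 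Gehring's actual route controls the oscillation of $f$ on spheres $S_t$ by the $L^{n-1}$ average of $|\D f|$ over $S_t$ (his oscillation lemma), and integration in $t$ then produces a weak reverse H\"older inequality with some exponent $q<n$ on the right (the modern alternative is the Caccioppoli inequality for the degenerate elliptic equation satisfied by the components of $f$, combined with Sobolev--Poincar\'e); lowering the right-hand exponent from $q$ to $1$ and fixing the supports is a further, standard self-improvement and covering step. Only in dimension $2$ does your $L^1$ oscillation bound admit a direct proof (bound $\diam f(S_t)$ by the length of $f(S_t)$ and integrate in $t$), but the lemma is needed for all $n$. So as written, the proposal either silently assumes a statement as strong as the lemma or leaves its real analytic content unproved; either cite the lemma wholesale, as the paper does, or carry out the sphere-wise $L^{n-1}$ (or Caccioppoli) argument together with the self-improvement step.
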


Roughly speaking, Lemma \ref{lemma:revholder} shows that, whenever $f$ is quasiregular, $|\D f|^n$ is a $A_\infty$ weight \cite[Section 9.3]{Grafakos2009}.  More precisely,  the lemma asserts that $|\D f|^n \in A_{\infty,\loc}$, a space that we will introduce in Section \ref{sec:muc} below.  It is well-known that such weights are doubling:

\begin{lemma}[Doubling property]\label{lemma:doubling}
Let $f\in W^{1,n}_\loc(\Omega, \R^n)$ be a $K$-quasiregular map. For all balls $B$ with $2B\subset \Omega$, we have
$$\int_{B} |\D f|^n \d x \leq C(n,K,N(f,\Omega)) \int_{\frac 1 2 B} |\D f|^n \d x.$$
Here, $C(n,K,N(f,\Omega)) = +\infty$ if $N(f,\Omega) = +\infty$.
\end{lemma}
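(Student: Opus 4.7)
The plan is to deduce the doubling property for $w\equiv|\D f|^n$ from the reverse Hölder inequality of Lemma \ref{lemma:revholder} via the standard theory of Muckenhoupt $A_\infty$ weights. As pointed out in the discussion immediately after that lemma, Lemma \ref{lemma:revholder} places $w$ in the class $A_{\infty,\loc}(\Omega)$, and such weights are well known to be locally doubling. I would carry out this reduction in two concrete steps.

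First I would derive a quantitative $A_\infty$-type distribution bound. Starting from the reverse Hölder inequality, rewritten as
\[
\int_B |\D f|^n\d x \le C^n |B|^{1-n}\paren{\int_B |\D f|\d x}^n, \qquad 2B\subset \Omega,
\]
I would split $\int_B|\D f|\d x=\int_E|\D f|\d x+\int_{B\setminus E}|\D f|\d x$, apply Hölder's inequality to each piece, and plug back in. After the resulting inequality is rearranged in the unknowns $\int_E|\D f|^n\d x$ and $\int_{B\setminus E}|\D f|^n\d x$ and one absorbs the term on $E$ when $|E|/|B|$ is small, this produces constants $s_0,\gamma\in(0,1)$, depending only on $n,K,N(f,\Omega)$, such that
\[
\frac{|E|}{|B|}\le s_0 \quad \implies\quad \int_E |\D f|^n \d x\le \gamma \int_B |\D f|^n \d x
\]
for every measurable $E\subset B$ with $2B\subset \Omega$. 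This is the quantitative $A_\infty$ condition: sets of small Lebesgue fraction carry only a fraction of the $|\D f|^n$-mass.

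Second, I would pass from this distribution bound to doubling by iterating on concentric balls. Fix $\rho\in(0,1)$ close enough to $1$ that $1-\rho^n\le s_0$, and apply the bound above with $E=B_s\setminus B_{\rho s}$ to obtain $\int_{B_s}|\D f|^n\d x\le (1-\gamma)^{-1}\int_{B_{\rho s}}|\D f|^n\d x$ for any $B_s$ with $2B_s\subset \Omega$. Iterating this estimate $N=\lceil \log 2/|\log \rho|\rceil$ times along the chain $B_r\supset B_{\rho r}\supset\cdots\supset B_{\rho^N r}\subset B_{r/2}$, noting that every ball in the chain still satisfies $2B_{\rho^k r}\subset 2B_r\subset\Omega$, yields the doubling estimate with constant $(1-\gamma)^{-N}$, depending only on $n,K,N(f,\Omega)$.

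The hard part is the failure of any one-step argument: a direct application of the reverse Hölder inequality on $B$ combined with Hölder's inequality on the pair $(B/2,\,B\setminus B/2)$ does not close, because a priori the $L^1$-mass of $|\D f|$ on $B$ could concentrate on the annulus $B\setminus B/2$. The quantitative $A_\infty$-type bound above, together with the iteration over concentric balls of slowly-decreasing radii, is precisely what circumvents this difficulty; this is a standard technique in Muckenhoupt weight theory, see \cite[Section 9.3]{Grafakos2009}.
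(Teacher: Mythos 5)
Your argument is correct, and it is essentially the route the paper itself intends: the paper treats this lemma as the standard fact that the reverse H\"older inequality of Lemma \ref{lemma:revholder} makes $|\D f|^n$ an $A_{\infty,\loc}$ weight, hence locally doubling (citing \cite[Lemma 5.2]{Hencl2007} and \cite[Section 9.3]{Grafakos2009}), and the same two ingredients you use --- the small-measure-ratio mass bound and the iteration over slightly shrinking concentric balls --- are exactly what is carried out for general $A_{\infty,\loc}$ weights in Steps 1--2 of Appendix \ref{app:a}. Your only deviation is cosmetic: you work directly with the exponent-$n$ reverse H\"older inequality for $|\D f|$ instead of first boosting to a $(1+\eps)$-reverse H\"older inequality for $|\D f|^n$ via Gehring, which makes the absorption step self-contained.
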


The reader should compare this lemma with  \cite[Lemma 5.2]{Hencl2007} (using also Lemma \ref{lemma:multbounds}). 


\subsection{Muckenhoupt weights}\label{sec:muc}

Besides quasiregular maps, for the proof of Theorem \ref{thm:qttive} we will also require a basic result concerning Muckenhoupt weights. We start by giving the definitions of $A_{p,\loc}(\Omega)$ and $A_{\infty,\loc}(\Omega)$ weights, which are based on the definition of the classical $A_p(\R^n),A_\infty(\R^n)$ weights. 
We set, for any $t \ge 1$,
\[
\mc{B}_{t}(\Omega) \equiv \{B: \text{$B$ is a ball with }tB \subset\Omega\}.
\]
We then introduce the following local definition:

\begin{definition}\label{def:locmuc}
Let $p > 1$ and $w \in L^1_{\loc}(\Omega)$. We say that $w \in A_{\infty,\loc}(\Omega)$ if it satisfies a reverse H\"older inequality: there exist $t,\eps, C > 0$ such that, for all $B \in \mc{B}_{t}(\Omega)$,
\begin{equation}\label{eq:ainf}
\left(\fint_{B}w^{1 + \eps}(x)dx\right)^{\frac{1}{1 + \eps}} \le C\fint_{B}w(x)dx.
\end{equation}
Moreover, we say that $w \in A_{p,\loc}(\Omega)$ if $w > 0$ a.e.\ on $\Omega$ and, for some $t>0$, 
\[
\sup_{B \in \mc{B}_t(\Omega)}\left(\fint_{B}w(x)dx\right) \left(\fint_{B}w^{-(p-1)}(x)dx\right)^{\frac{1}{p-1}}< +\infty.
\]
\end{definition} 

In the sequel, we will use the following general result concerning the relation between $A_{\infty,\loc}(\Omega)$ and $A_{p,\loc}(\Omega)$ weights:

\begin{theorem}[$A_p$-weights]\label{thm:Ap}
Let $w\in A_{\infty,\loc}(\Omega)$ and let $\eps,C> 0$ be the constants for which \eqref{eq:ainf} holds for balls in $\mc{B}_{t}(\Omega)$. Then either $w = 0$, or $w > 0$ a.e.\ in $\Omega$ and $w \in A_{p,\loc}(\Omega)$ for some $p<\infty$. In the latter case, there exist $c>0$ and $p>1$ depending only on $n,$ $\eps$ and $C$ such that
\begin{equation}
\fint_{B} w^{1-p}dx \le c \left(\fint_{B}w(x)dx\right)^{1-p},
\label{eq:revholderprelim}
\end{equation}
for all balls $B \in \mc{B}_{16t}(\Omega)$.
\end{theorem}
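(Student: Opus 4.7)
The plan is to reduce the statement to the classical theory of $A_\infty$ weights on $\R^n$, with careful tracking of how the dilation factor in $\mc{B}_t(\Omega)$ must grow at each step. I would proceed in three stages.

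First, from the reverse Hölder inequality \eqref{eq:ainf} I would derive the quantitative $A_\infty$-type estimate
\[
\frac{|E|}{|B|} \leq c_1 \paren{\frac{w(E)}{w(B)}}^{\delta}
\]
for every measurable $E\subset B$ and every $B \in \mc{B}_{t_1}(\Omega)$, with some $t_1 \lesssim t$ and constants $c_1,\delta > 0$ depending only on $n,\eps,C$. This is the classical implication from reverse Hölder to $A_\infty$, obtained via the Gehring/Coifman--Fefferman self-improvement argument; since that derivation is local to each ball, it transfers to our setting without requiring any boundary-regularity on $\Omega$.

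Second, using this estimate I establish the dichotomy. Setting $E = B\cap\{w=0\}$ yields, for each $B \in \mc{B}_{t_1}(\Omega)$, the alternative: either $w \equiv 0$ a.e.\ on $B$, or $w > 0$ a.e.\ on $B$. Define
\[
Z \equiv \{x \in \Omega : w = 0 \text{ a.e.\ on some ball around } x\}, \qquad P \equiv \Omega \setminus Z.
\]
Then $Z$ is open by construction. Moreover $P$ is open: any $x \in P$ admits a small ball $B \in \mc{B}_{t_1}(\Omega)$ centered at $x$, and the dichotomy forces $w > 0$ a.e.\ on $B$, which in turn prevents any point of $B$ from lying in $Z$, so $B \subset P$. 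Connectedness of $\Omega$ and a countable covering argument then yield either $w > 0$ a.e.\ on $\Omega$ or $w \equiv 0$ a.e.\ on $\Omega$.

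Third, in the nontrivial case $w > 0$ a.e., a standard layer-cake/truncation argument applied to the $A_\infty$-type estimate of Step 1 shows that for $p-1$ large enough depending on $\delta$,
\[
\fint_B w^{-(p-1)}\,dx \leq c_2 \paren{\fint_B w\,dx}^{-(p-1)}
\]
on every $B \in \mc{B}_{t_2}(\Omega)$ for a further enlarged dilation factor $t_2$. This is precisely \eqref{eq:revholderprelim}, and the value $16t$ in the statement is generous enough to absorb all the successive geometric dilations performed above. The main technical obstacle is precisely this geometric bookkeeping: near $\p\Omega$ one cannot freely dilate balls, and each step — reverse Hölder self-improvement, covering arguments to pass from balls to arbitrary subsets, and the truncation step that extracts the $A_p$ condition — must be run on balls small enough that slightly enlarged versions still lie in $\Omega$. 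The analytical content is otherwise classical Muckenhoupt theory; what requires care is showing that the single factor $16t$ suffices to control every enlargement in the chain.
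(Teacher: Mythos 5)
Your Step 1 is where the proposal breaks down. The elementary consequence of the reverse H\"older inequality \eqref{eq:ainf} on a single ball (just H\"older's inequality) is the \emph{opposite} estimate, $w(E)/w(B)\le C\left(|E|/|B|\right)^{\eps/(1+\eps)}$ — this is \eqref{eq:AB} in the appendix. The inequality you assert, $|E|/|B|\le c_1\left(w(E)/w(B)\right)^{\delta}$, with the Lebesgue ratio dominated by a power of the weighted ratio, is essentially equivalent to the conclusion \eqref{eq:revholderprelim} itself (it encodes a reverse H\"older inequality for $w^{-1}$ with respect to $w\,\d x$), and it is \emph{not} obtained by a "self-improvement argument local to each ball": classically it requires a Calder\'on--Zygmund stopping-time argument which in turn uses the doubling property of the measure $w\,\d x$, and doubling is proved by applying the measure estimate on a dilated ball $\lambda B$ — precisely the step that couples different balls and forces the dilation bookkeeping ($\mc{B}_2\to\mc{B}_8\to\mc{Q}_{32}\to\mc{B}_{32}$ in the paper's proof, via \eqref{eq:smalln}, \eqref{eq:doub} and \eqref{eq:inverse}, following \cite[Theorem 9.2.2, Corollary 9.2.4]{Grafakos2009}). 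By invoking Step 1 as a black box you have assumed the substantive content of the theorem. There is also a circularity in the ordering of Steps 1 and 2: the classical equivalences you appeal to presuppose $w>0$ a.e.\ (and your Step-1 inequality is meaningless on a ball where $w\equiv 0$), while in the local setting positivity is not part of the definition of $A_{\infty,\loc}$ and is exactly the dichotomy you try to deduce \emph{from} Step 1. The paper's proof avoids this by first proving $w>0$ a.e.\ using only the easy estimate \eqref{eq:AB}, via a density-point/translation/connectedness argument (Step 1 of the appendix, cf.\ \cite[Lemma 14.5.1]{Iwaniec2001}), and only then establishing doubling and running the Calder\'on--Zygmund argument on cubes.

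A secondary, fixable slip: in your Step 3 the exponent must satisfy $p-1<\delta$, not "$p-1$ large". Writing $\int_B w^{1-p}\d x=(p-1)\int_0^\infty \lambda^{-p}\,|\{w<\lambda\}\cap B|\,\d\lambda$ and using your Step-1 bound on $E_\lambda=\{w<\lambda\}\cap B$ gives $|E_\lambda|\le c_1|B|\left(\lambda|B|/w(B)\right)^{\delta}$, so the integral converges near $\lambda=0$ only for $p-1<\delta$ (and \eqref{eq:revholderprelim} for a small $p$ does not imply it for larger $p$). With that correction, and with the cut at $\lambda_0=w(B)/|B|$, your truncation argument does yield \eqref{eq:revholderprelim}, and it is a slightly slicker endgame than the paper's rearrangement of the reverse H\"older inequality for $w^{-1}$ with respect to $\mu=w\,\d x$ — but it only becomes available once the genuinely hard localization step above has actually been carried out.
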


Theorem \ref{thm:Ap} is well-known but,  since we could not find a precise reference for it in our local setting,  for the sake of completeness we provide a proof in Appendix \ref{app:a}. However,  the proof is almost identical to the standard one in the usual global context of $A_p$ and $A_\infty$ weights, as in \cite[Theorem 9.3.3]{Grafakos2009}.


\section{A unique continuation principle for the distance function}\label{sec:qualita}

In this section we prove Theorem \ref{thm:main}. The main point of the proof is to show that, for a solution $u\in W^{1,n}_\tp{loc}(\Omega,\R^n)$ of \eqref{eq:incEgamma},  the distance function $\dist(\D u, \Gamma)$ satisfies a reverse H\"older inequality with non-increasing supports.

\subsection{Proof of Theorem \ref{thm:main}}

Let us fix a set $\Omega'\Subset \Omega$ and some small $\delta>0$. We claim that there is a positive constant $C=C(u,n,\Gamma, K,\Omega')$ such that 
\begin{equation}
\label{eq:revholderdist}
\left(\fint_B \tp{dist}(\D u,\Gamma)^{n+\delta} \d x\right)^{\frac{1}{n+\delta}}\leq C \left(\fint_B \tp{dist}(\D u,\Gamma)^n \d x\right)^{\frac 1 n}
\end{equation}
for all balls  $B$ with $2B\subset \Omega'$. Once this is shown, the conclusion follows by applying the principle of unique continuation for functions satisfying reverse H\"older inequalities, see \cite[Lemma 14.5.1]{Iwaniec2001},  or instead simply Theorem \ref{thm:Ap} above, which asserts that any such function which vanishes on a set of positive measure must vanish identically.

In order to prove \eqref{eq:revholderdist}, let $A \in \Gamma$ be any matrix, and let $\varphi(x) \equiv Ax$. We want to apply Lemma \ref{lemma:revholder} to the $K$-quasiregular map $f\equiv u-\varphi$, which we may assume to be non-constant, as otherwise there is nothing to prove. Note that, by Gehring's Lemma \cite[Lemma 3]{Gehring1973a}, Lemma \ref{lemma:revholder} yields the improved estimate
\begin{equation}\label{eq:rev}
\left(\fint_{B} |\D f|^{n+\delta}\d x \right)^{\frac {1}{n+\delta}} \leq C(n,K,N(f,\Omega')) \fint_B |\D f| \d x,
\end{equation}
for some small $\delta=\delta(n,K,N(f,\Omega'))>0$. Using this estimate, we have
\begin{align*}
\fint_B \dist(\D u,\Gamma)^{n+\delta} \d x & \leq \fint_B |\D (u- \varphi)|^{n+\delta} \d x \\
& \overset{\eqref{eq:rev}}{\leq} C(n,K,N(u-\varphi,\Omega')) \left(\fint_B |\D (u- \varphi)| \d x \right)^{n+\delta}\\
& \leq C(n,K,N(u-\varphi,\Omega')) \left(\fint_B |\D (u- \varphi)|^n \d x \right)^{\frac{n+\delta}{n}},
\end{align*}
where the last inequality is just H\"older's inequality. Thus, applying Lemma \ref{lemma:doubling} to the quasiregular map $f$, we arrive at
\begin{equation}\label{eq:prefinale}
\left(\fint_B \dist(\D u,\Gamma)^{n+\delta} \d x\right)^{\frac{1}{n+\delta}} \leq C(n,K,N(u-\varphi,\Omega')) 
\left(\fint_{\frac 1 2 B} |\D (u- \varphi)|^n \d x \right)^{\frac{1}{n}}.
\end{equation}
 Since we assume that $\Gamma$ satisfies the   rigidity estimate, see Definition \ref{eq:weakrig}, we can choose $A \in \Gamma$ such that
\begin{equation}\label{eq:choicephi}
\fint_{\frac 1 2 B} |\D (u- \varphi)|^n \d x = \fint_{\frac 1 2 B} |\D u - A|^n \d x\leq  C_\Gamma \fint_{B} \dist(\D u, \Gamma)^n \d x.
\end{equation}
Thus the desired estimate \eqref{eq:revholderdist} follows from Lemma \ref{lemma:multbounds}, since $N(f,\Omega')<\infty$.

\section{A quantitative unique continuation principle}\label{sec:quantita}

In this section we explain how a refinement of the proof in the previous section in fact leads to the stronger result in Theorem \ref{thm:qttive}.

\subsection{A uniform bound on the multiplicity}

The main result of this subsection is Proposition \ref{prop:unimultbounds}, which gives a quantitative improvement over Lemma \ref{lemma:multbounds} above. In order to state it,  let us introduce some notation. For any open set $\Omega' \Subset \Omega$,  we consider
for $M>0$ the class of maps
\begin{align*}
X_M(\Omega',\Omega) \equiv &\{u \in W^{1,n}(\Omega,\R^n): u \text{ satisfies \eqref{eq:incEgamma} and \eqref{eq:bdddistance}}
\}.
\end{align*}
We will use the short-hand notation $X_M$ for $X_M(\Omega',\Omega)$.
\begin{proposition}[Uniform multiplicity bounds]\label{prop:unimultbounds}
Let $\Omega \subset \R^n$ be open and let $\Omega' \Subset \Omega$. Let $\Gamma\subset \R^{n\times n}$ satisfy a   rigidity estimate. Then there is $C=C(n,K,\Gamma,M,\Omega')>0$ such that $$N(u-\varphi,\Omega')\leq C$$ for all $u \in X_M$ and all linear maps $\varphi(x) \equiv Ax$ for $A \in \Gamma$.
\end{proposition}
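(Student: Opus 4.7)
The plan is to argue by contradiction: suppose there exist sequences $u_k\in X_M$ and $A_k\in\Gamma$ with $N_k\equiv N(u_k-A_k\,\cdot\,,\Omega')\to\infty$, and set $f_k(x)\equiv u_k(x)-A_k x$. By the very definition of $\mc E_\Gamma$, each $f_k$ is $K$-quasiregular, and after translating $u_k$ by a constant we may assume $f_k(x_0)=0$ at some fixed $x_0\in\Omega'$. We then split into two cases according to whether $|A_k|$ stays bounded or $|A_k|\to\infty$.

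In the bounded case, $\int_\Omega |\D f_k|^n$ is uniformly bounded, so Reshetnyak's theorem (Theorem \ref{thm:reshet}) combined with Arzel\`a--Ascoli yields a (non-relabeled) subsequence with $A_k\to A$ and $f_k\to f$ locally uniformly on $\Omega$, the limit $f$ being $K$-quasiregular since the distortion inequality \eqref{eq:qr} is stable under local uniform convergence of $W^{1,n}$-bounded maps. If $f$ is non-constant, then Lemma \ref{lemma:multbounds} gives $N(f,\Omega'')<\infty$ for some intermediate $\Omega'\Subset\Omega''\Subset\Omega$, and a stability principle for the multiplicity of quasiregular maps under local uniform convergence (discussed below) forces $\limsup_k N_k\leq N(f,\Omega'')<\infty$, contradicting $N_k\to\infty$. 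If instead $f$ is constant, we use that the null-Lagrangian structure expresses $J_{f_k}$ as a divergence of terms linear in $f_k$ and $(n-1)$-multilinear in $\D f_k$; combining this with $f_k\to\tp{const}$ uniformly and $\D f_k$ bounded in $L^n$ yields $J_{f_k}\to 0$ in the sense of distributions. Pairing the pointwise bound $|\D f_k|^n\leq KJ_{f_k}$ with any non-negative smooth cutoff then forces $\D f_k\to 0$ in $L^n_\loc$. In particular $\dist(\D u_k,\Gamma)\leq|\D u_k-A_k|=|\D f_k|\to 0$ in $L^n(\Omega')$, contradicting the lower bound $\int_{\Omega'}\dist(\D u_k,\Gamma)^n\geq M^{-1}$ in the definition of $X_M$.

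In the unbounded case $|A_k|\to\infty$, we renormalize via $v_k\equiv f_k/|A_k|$, which is still $K$-quasiregular because \eqref{eq:qr} is positively $n$-homogeneous, and for which $N(v_k,\Omega')=N_k$ since scalar rescaling preserves multiplicity. Setting $B_k\equiv A_k/|A_k|$, after a further subsequence $B_k\to B$ with $|B|=1$; since $\|\D u_k/|A_k|\|_{L^n(\Omega)}\to 0$ by the $L^n$-bound on $\D u_k$, we obtain $\D v_k\to -B$ strongly in $L^n(\Omega)$. Passing \eqref{eq:qr} to the limit yields $1=|B|^n\leq K|\det B|$, so $B$ is invertible. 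Another application of Reshetnyak and Arzel\`a--Ascoli upgrades this to $v_k\to v$ locally uniformly, where $v(x)=c-Bx$ is affine and injective, hence $N(v,\Omega'')=1$ for any bounded $\Omega''$. The stability principle then yields $\limsup_k N_k\leq 1$, the final contradiction.

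The main technical ingredient is the stability of multiplicity: if $g_k,g$ are $K$-quasiregular on $\Omega$ with $g_k\to g$ locally uniformly and $g$ non-constant, then $\limsup_k N(g_k,\Omega')\leq N(g,\Omega'')$ whenever $\Omega'\Subset\Omega''\Subset\Omega$. I regard this as the principal obstacle, although it follows from standard degree theory for quasiregular maps: by Reshetnyak, $g$ is open and discrete, so the preimages in $\overline{\Omega'}\subset\Omega''$ of any accumulation point $y^*$ of a sequence $y_k$ form a finite set $\{z_1,\dots,z_\ell\}$ with finite local topological indices $i(z_j,g)$; choosing disjoint small balls about the $z_j$ inside $\Omega''$, the local constancy of the topological degree under uniform convergence implies that for $k$ large every preimage in $\Omega'$ of $y_k$ under $g_k$ must lie in one of these balls, and each ball contains at most $i(z_j,g)$ such preimages, for a total at most $\sum_j i(z_j,g)\leq N(g,\Omega'')$.
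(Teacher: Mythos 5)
Your argument is correct and follows essentially the same route as the paper: the same contradiction setup with the dichotomy on boundedness of $A_k$, the null-Lagrangian/weak-continuity-of-Jacobians argument when the limit map is constant (the paper's Lemma \ref{lemma:qrconst}), a degree-theoretic stability of multiplicity under locally uniform convergence when it is not (the paper's Lemma \ref{lemma:qrnonconst}), and the renormalization by $|A_k|$ leading to an injective affine limit in the unbounded case. The only differences are implementation details: you normalize $f_k(x_0)=0$ and invoke Arzel\`a--Ascoli where the paper passes through weak $W^{1,n}$ compactness, and you establish the multiplicity stability via local indices and regular values (using that nonconstant quasiregular maps are sense-preserving with branch set of null image) instead of the normal neighborhoods and degree lemmas cited in the paper.
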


To prove this result,  we begin with the following simple lemma:

\begin{lemma}\label{lemma:qrconst}
Let $(f_j) \subset W^{1,n}_\loc(\Omega,\R^n)$ be a sequence of $K$-quasiregular maps and let $c \in \R^n$ be a constant. Then 
$$f_j \w c \text{ in } W^{1,n}_\loc(\Omega,\R^n) \quad \implies \quad
f_j \to c \text{ in } W^{1,n}_\loc(\Omega,\R^n) 
$$
\end{lemma}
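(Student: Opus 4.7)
The plan is to exploit the quasiregularity inequality $|\D f_j|^n \le K \det \D f_j$ to reduce the desired strong convergence of the gradients to a statement about the distributional convergence of the Jacobians, which I will then extract from the null Lagrangian structure of the determinant.

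By the weak $W^{1,n}_\loc$-convergence and the Rellich--Kondrachov compact embedding, $f_j \to c$ strongly in $L^n_\loc(\Omega,\R^n)$, so it suffices to prove $\D f_j \to 0$ in $L^n_\loc$. For this, fix any ball $B$ with $2B \Subset \Omega$ and a non-negative cutoff $\varphi \in C_c^\infty(2B)$ with $\varphi \equiv 1$ on $B$. Integrating the pointwise quasiregularity bound against $\varphi$ yields
$$\int_B |\D f_j|^n \d x \;\le\; K \int_\Omega \varphi \det \D f_j \d x,$$
so it remains to prove that the right-hand side tends to zero as $j\to\infty$.

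Next I would invoke the null Lagrangian structure of the determinant. For any $g \in W^{1,n}_\loc(\Omega,\R^n)$ the cofactor $\cof \D g$ lies in $L^{n/(n-1)}_\loc$ and is row-divergence free in $\mc{D}'(\Omega)$ (Piola's identity), and $\det \D g$ admits the distributional divergence form
$$\det \D g \;=\; \tfrac{1}{n}\, \partial_i\bigl(g^\alpha (\cof \D g)^i_\alpha\bigr),$$
with the usual summation convention. Both identities hold for smooth $g$ and extend to $W^{1,n}_\loc$ by density, since $g \in L^n_\loc$ and $\cof \D g \in L^{n/(n-1)}_\loc$ make the product $g^\alpha (\cof \D g)^i_\alpha$ lie in $L^1_\loc$ by H\"older. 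Applying this representation to $f_j$ and splitting $f_j^\alpha = c^\alpha + (f_j^\alpha - c^\alpha)$ inside the pairing against $\varphi$ gives
$$n \int_\Omega \varphi \det \D f_j \d x \;=\; -\, c^\alpha \int_\Omega \partial_i \varphi\,(\cof \D f_j)^i_\alpha \d x \;-\; \int_\Omega \partial_i \varphi\,(f_j^\alpha - c^\alpha)(\cof \D f_j)^i_\alpha \d x.$$
The first summand vanishes identically by Piola's identity applied to $f_j$. The second summand is estimated via H\"older's inequality by
$$\|\D\varphi\|_\infty\,\|f_j - c\|_{L^n(2B)}\, \|\cof \D f_j\|_{L^{n/(n-1)}(2B)},$$
in which the middle factor tends to zero by Rellich while the remaining factors stay bounded (the last one because $|\cof A| \lesssim |A|^{n-1}$ and $\D f_j$ is bounded in $L^n_\loc$ by weak convergence).

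The main, rather mild, technical point is verifying the two null Lagrangian identities at the $W^{1,n}$ threshold, but this is classical via approximation by smooth maps combined with the $L^1_\loc$-bound on the product above. Apart from this, the argument relies only on standard Sobolev compactness and H\"older's inequality, so no substantial obstacle is anticipated.
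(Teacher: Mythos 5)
Your proof is correct and follows essentially the same route as the paper, which reduces the claim to the distortion inequality plus the weak continuity of Jacobians under weak $W^{1,n}_\loc$-convergence (cited there from M\"uller's work). The only difference is that you reprove that weak-continuity ingredient by hand, via the divergence-form identity for $\det \D g$, Piola's identity, and Rellich compactness, which makes the argument self-contained but adds nothing new in substance.
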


\begin{proof}
The claim follows from \cite[Corollary 1.2]{Muller1990} and the distortion inequality \eqref{eq:qr}.
\end{proof}

We will also use the following topological result, which asserts that for discrete, open maps the supremum in \eqref{eq:defmult} can be replaced with the essential supremum.

\begin{lemma}\label{lemma:openmap}
Let $f\colon \Omega\to \R^n$ be continuous, open and discrete with $\|N(f,\Omega,\cdot)\|_{L^\infty(f(\Omega))}<\infty$. Then
$N(f,\Omega) = \|N(f,\Omega,\cdot)\|_{L^\infty(f(\Omega))}.$
\end{lemma}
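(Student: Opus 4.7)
\textbf{Proof plan for Lemma \ref{lemma:openmap}.} The plan is to prove the nontrivial direction $N(f,\Omega)\leq \|N(f,\Omega,\cdot)\|_{L^\infty(f(\Omega))}$ by showing that high pointwise multiplicity spreads to a whole open neighborhood of positive measure, which would contradict the essential supremum being finite. Write $N_\infty \equiv \|N(f,\Omega,\cdot)\|_{L^\infty(f(\Omega))}$, and argue by contradiction, assuming there exists $y_0\in f(\Omega)$ with $N(f,\Omega,y_0)\geq N_\infty+1$.

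The first step would be to select $N_\infty+1$ distinct preimages $x_1,\dots,x_{N_\infty+1}\in f^{-1}(y_0)\cap \Omega$. The discreteness of $f$ gives that $f^{-1}(y_0)$ is a locally finite, relatively closed subset of $\Omega$, so I can choose pairwise disjoint open neighborhoods $U_i\Subset \Omega$ of the $x_i$ with $f^{-1}(y_0)\cap \overline{U_i}=\{x_i\}$. Now I invoke the openness of $f$: each $f(U_i)$ is an open set containing $y_0$, and therefore $V \equiv \bigcap_{i=1}^{N_\infty+1} f(U_i)$ is an open neighborhood of $y_0$, with in particular $|V|>0$.

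The final step is to observe that for each $y\in V$ and each $i$ there exists $z_i\in U_i$ with $f(z_i)=y$; since the $U_i$ are disjoint, the $z_i$ are distinct and $N(f,\Omega,y)\geq N_\infty+1$ for every $y\in V$. Since $V\subset f(\Omega)$ has positive Lebesgue measure, this contradicts the definition of $N_\infty$ as the essential supremum of $N(f,\Omega,\cdot)$ over $f(\Omega)$. Hence $N(f,\Omega,y_0)\leq N_\infty$ for every $y_0$, and combined with the trivial reverse inequality this proves the lemma. I do not foresee any significant obstacle: the only delicate point is the initial extraction of disjoint neighborhoods around the preimages, which is handled cleanly by discreteness, while openness is what transfers the lower bound on multiplicity from a single point to a set of positive measure.
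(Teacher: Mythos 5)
Your proposal is correct and follows essentially the same route as the paper's proof: pick $N_\infty+1$ preimages, surround them by disjoint neighborhoods (the paper simply uses small disjoint balls, so your appeal to discreteness is harmless but not needed), intersect the images via openness of $f$ to get an open set on which every point has at least $N_\infty+1$ preimages, and contradict the essential supremum.
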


\begin{proof}
Let us write $N\equiv \|N(f,\Omega,\cdot)\|_{L^\infty(f(\Omega))}\leq N(f,\Omega)$. Assume by contradiction that $N(f,\Omega,y) \ge  N+1$ for some $y \in f(\Omega)$, so let $\{x_1,\dots, x_{N+1}\}$ be pre-images of $y$ through $f$. We can find $r > 0$ sufficiently small such that $B_r(x_i) \cap B_r(x_j) = \emptyset$ for all $i \neq j$. Since $f_j$ is an open mapping, the set
\[
U\equiv \bigcap_{i = 1}^{N +1}f_j(B_r(x_i))
\]
is itself open. Thus, for a.e.\ $\tilde y$ in $U$ we have $N(f,\Omega,\tilde y)\leq N$. But this is a contradiction, since each point in $U$ has at least $N + 1$ pre-images by construction.
\end{proof}

Finally, we show the following lemma, which represents a counterpart to Lemma \ref{lemma:qrconst} in the case the limit map is non-constant.

\begin{lemma}\label{lemma:qrnonconst}
Let $(f_j) \subset C^0(\Omega,\R^n)$ be a sequence of maps converging locally uniformly to a discrete and open map $f \in C^0(\Omega,\R^n)$. Then, for any set $\Omega' \Subset \Omega$,
$$\limsup_{j \to \infty }N(f_j,\overline{\Omega'}) < + \infty.$$
\end{lemma}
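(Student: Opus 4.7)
The plan is to argue by contradiction. Suppose, up to a subsequence, $N(f_j,\overline{\Omega'}) \to \infty$; then one can extract $y_j \in \R^n$ and $M_j \to \infty$ distinct points $x_1^j,\dots,x_{M_j}^j \in \overline{\Omega'}$ with $f_j(x_i^j)=y_j$. By local uniform convergence $\|f_j\|_{L^\infty(\overline{\Omega'})}$ is uniformly bounded, so after a further subsequence $y_j \to y$ for some $y\in \R^n$.

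The next step is to show that the $x_i^j$'s concentrate on the finite fiber of $f$ over $y$. Fix $\Omega''$ with $\Omega' \Subset \Omega'' \Subset \Omega$; by Lemma \ref{lemma:multbounds} the set $\{p_1,\dots,p_m\} \equiv f^{-1}(y)\cap \overline{\Omega'}$ is finite (and non-empty, as any subsequential limit of $x_1^j$ produces a pre-image of $y$). A standard extraction argument then shows that for every sufficiently small $\eta>0$ and for $j$ large, every $x_i^j$ lies in $\bigcup_{l=1}^m B_\eta(p_l)$: otherwise a subsequential limit $\tilde x \in \overline{\Omega'}\setminus\bigcup_l B_\eta(p_l)$ would satisfy $f(\tilde x)=y$, contradicting the definition of $\{p_l\}$. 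By pigeonhole, at least $\lceil M_j/m\rceil \to \infty$ of the $x_i^j$'s accumulate in a single ball $B_\eta(p^*)$.

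The contradiction now follows from the stability of the local topological degree. Choose $\eta$ small enough that $\overline{B_\eta(p^*)} \subset \Omega''$ and $f^{-1}(y)\cap \overline{B_\eta(p^*)} = \{p^*\}$; in particular $y \notin f(\partial B_\eta(p^*))$, so the Brouwer degree $d \equiv \deg(f, B_\eta(p^*), y)$ is a well-defined positive integer (namely the local index of $f$ at $p^*$). Uniform convergence $f_j \to f$ on $\overline{B_\eta(p^*)}$ together with $y_j \to y$ and the homotopy invariance of the degree yield $\deg(f_j, B_\eta(p^*), y_j) = d$ for $j$ large. Assuming the $f_j$ are themselves open and discrete -- which in the intended application holds, since the $f_j$ arise as non-constant $K$-quasiregular maps and Theorem \ref{thm:reshet} supplies openness and discreteness -- each preimage of $y_j$ under $f_j$ contributes a positive local index, whence
\[
|f_j^{-1}(y_j) \cap B_\eta(p^*)| \le \deg(f_j,B_\eta(p^*),y_j) = d,
\]
contradicting the accumulation of at least $\lceil M_j/m\rceil \to \infty$ preimages of $y_j$ in $B_\eta(p^*)$.

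The critical step is the final bound: controlling the cardinality of $f_j^{-1}(y_j)\cap B_\eta(p^*)$ by the Brouwer degree truly requires that the $f_j$ be open and discrete, as in the stated generality of merely continuous $f_j$ one can produce oscillating sequences converging uniformly to the identity with unbounded multiplicity. In the application to Proposition \ref{prop:unimultbounds} the $f_j$ will be non-constant quasiregular maps of the form $u_j-\varphi_j$, so openness and discreteness are automatic, and the argument goes through.
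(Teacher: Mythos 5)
Your argument is correct in the setting where the lemma is actually applied, and it runs on the same degree-theoretic engine as the paper's proof: stability of the Brouwer degree under uniform convergence, finiteness of the fibers of the limit map (Lemma \ref{lemma:multbounds}), and the fact that for open, discrete, sense-preserving maps the degree dominates the number of preimages. The packaging differs. The paper covers $\overline{\Omega'}$ by finitely many normal neighborhoods $A_i$ of the limit $f$ (via \cite{Martio1969}), equates $d(f_j,A_i,y)=d(f,A_i,y)$ for the relevant values $y$, and converts degrees into multiplicities through the a.e.\ identity \eqref{eq:eqN} together with Lemma \ref{lemma:openmap}; this yields the explicit bound $N(f_j,\overline{\Omega'},y)\le \ell\max_i N(f,A_i)$ for all large $j$. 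Your compactness/contradiction argument instead concentrates unboundedly many $f_j$-preimages of $y_j\to y$ near a single point $p^*$ of the finite fiber $f^{-1}(y)\cap\overline{\Omega'}$ and contradicts $\deg(f_j,B_\eta(p^*),y_j)=\deg(f,B_\eta(p^*),y)$. Two small touch-ups are needed: pass to a further subsequence so that the pigeonholed point $p^*$ is independent of $j$, and quote explicitly that a non-constant quasiregular map has local index $\ge 1$ at every point (see \cite[I.4]{Rickman1993}), which is precisely what turns the degree into an upper bound for the cardinality of $f_j^{-1}(y_j)\cap B_\eta(p^*)$.

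Your closing objection is well founded and concerns the statement rather than your proof: for merely continuous $f_j$ the conclusion is false. On $\Omega=(-1,1)^2$, the smooth maps $f_j(x_1,x_2)=(x_1+j^{-1}\sin(j^2x_1),\,x_2)$ converge uniformly to the identity, which is open and discrete, yet $N(f_j,\overline{\Omega'})\gtrsim j$. The paper's proof implicitly uses the missing hypothesis at \eqref{eq:eqN}: the identity $d(f_j,A_i,y)=N(f_j,A_i,y)$ for a.e.\ $y$ cannot hold for general continuous maps (in the example above the degree is $1$ while $N\sim j$ on a set of positive measure), so invoking \cite[Lemma 2.4]{Fonseca1995b} for the $f_j$ already presupposes additional structure such as openness, discreteness and positivity of the local index. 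In the only place the lemma is used, Proposition \ref{prop:unimultbounds}, the maps $f_j=u_j-\varphi_j$ are non-constant $K$-quasiregular, so by Theorem \ref{thm:reshet} this extra structure is available and both your argument and the paper's go through; but the hypothesis that the $f_j$ themselves are open and discrete (e.g.\ quasiregular), which you assumed, should indeed be added to the statement.
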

\begin{proof}
We let $d(g,U,p)$ be the Brouwer degree of a continuous function $g\colon U \to \R^n$ with respect to the point $p \notin g(\partial U)$. We refer the reader to \cite[Section 2]{Fonseca1995b} for the definition. We employ \cite[Lemma 2.9, Corollary 2.10]{Martio1969} to find a system of normal neighborhoods for $f$ in $\Omega$, i.e.\ for all $x \in \Omega$, there exists $r_0 = r_0(x) > 0$ and open sets $\{U(x,r)\}_{r \le r_0}$ with the following properties
\begin{enumerate}
\item\label{prop:ze} $\lim_{r\to 0}\diam(U(x,r)) = 0$;
\item\label{prop:fi} for all $r \le r_0$, $f(\partial U(x,r)) = \partial (f(U(x,r)))$;
\item\label{prop:se} for all $0 < r < s < r_0$, $\overline{U(x,r)} \subset U(x,s)$.
\end{enumerate}
These neighborhoods actually enjoy many more properties, but these are sufficient for our purposes. As $\overline{\Omega'}$ is a compact set inside $\Omega$, we can find finitely many points $x_1,\dots, x_\ell \in \overline{\Omega'}$ and positive numbers $r_1,\dots,r_\ell$ such that 
\begin{equation}\label{eq:nn}
\overline {\Omega'} \subset \bigcup_{i= 1}^\ell U\left(x_i,\frac{r_i}{2}\right) \subset  \bigcup_{i= 1}^\ell U\left(x_i,r_i\right)\Subset \Omega.
\end{equation}
Let $B_i \equiv U\left(x_i,\frac{r_i}{2}\right)$ and $A_i \equiv U\left(x_i,r_i\right)$, for all $i =1,\dots, \ell$. Due to \ref{prop:fi}-\ref{prop:se},
\[
f(\overline{B_i}) \cap f(\partial A_i) = \emptyset, \quad \forall i.
\]
By the uniform convergence $f_j\to f$ on $\overline A_i$, we thus find $\varepsilon_i > 0$ such that for all $y \in f(\overline{B_i})\cup f_j(\overline{B_i})$ and all $j \in \N$ sufficiently large (depending on $i$),
\begin{equation}\label{eq:epsmin}
0 < \varepsilon_i \le \min\{\dist(y,f(\partial A_i)),\dist(y,f_j(\partial A_i))\}.
\end{equation}
If $j$ is sufficiently large, \cite[Proposition 2.3(i)]{Fonseca1995b} tells us that then for all such $y$
\begin{equation}\label{eq:eqdeg}
d(f_j,A_i,y) = d(f,A_i,y).
\end{equation}
Now \cite[Lemma 2.4(i)-(ii)]{Fonseca1995b} imply that
\begin{equation}\label{eq:eqN}
d(f_j,A_i,y)  = N(f_j,A_i,y) \text{ and } d(f,A_i,y) = N(f,A_i,y)\quad  \text{for a.e.\ } y \text{  in }\R^n.
\end{equation}
Combining \eqref{eq:eqdeg} and \eqref{eq:eqN}, we find that, for all $j\geq j_i$, $j_i$ sufficiently large, and for almost every $y \in  f(\overline{B_i}) \cup  f_j(\overline{B_i})$,
\[
N(f_j,A_i,y) = N(f,A_i,y).
\]
Due to Lemmas \ref{lemma:multbounds} and \ref{lemma:openmap}, we then find that for all $y \in f(\overline{B_i}) \cup  f_j(\overline{B_i})$,
\begin{equation}\label{eq:fj}
\sup_{j\geq j_i} N(f_j,\overline{B_i},y) \leq \sup_{j\geq j_i} N(f_j,A_i,y) \le N(f,A_i).
\end{equation}
Therefore, choosing $j_0 = \max\{j_1,\dots, j_\ell\}$, we can write for all $y \in f_j(\overline{\Omega'})$ and $j\geq j_0$:
\[
N(f_j,\overline \Omega',y) \le \sum_{i = 1}^\ell N(f_j,B_i,y) \overset{\eqref{eq:fj}}{\le} \ell\max_{i = 1}^\ell N(f,A_i)<\infty,
\]
by Lemma \ref{lemma:multbounds}. This concludes the proof. 
\end{proof}

Notice that the fact that Lemmas \ref{lemma:openmap} and \ref{lemma:qrnonconst} apply in our setting is due to Theorem \ref{thm:reshet}. We can finally give the proof of Proposition \ref{prop:unimultbounds}.

\begin{proof}[Proof of Proposition \ref{prop:unimultbounds}]
We argue by contradiction: let us assume that there is a sequence of solutions $u_j \in X_M$ and of matrices $A_j \in \Gamma$ such that $f_j \equiv u_j - \varphi_j$ satisfies $N(f_j,\Omega')\geq j$, where $\varphi_j(x) \equiv A_j x$ are the induced linear maps. We now split the proof into two cases.

\medskip
\textbf{Case 1:} We will not relabel subsequences. Assume there exists a subsequence of $(A_j)_j$ which is equibounded.
Then there exists a subsequence of $(f_j)_j$ which is equibounded in $W^{1,n}(\Omega,\R^n)$, since $u_j \in X_M$ for all $j$, and so $f_j \w f$ in $W^{1,n}(\Omega,\R^n)$ up to a subsequence. If $f$ is constant, then we see that the convergence is actually strong in $W^{1,n}(\Omega',\R^n)$ by Lemma \ref{lemma:qrconst}. But then
\begin{equation}\label{eq:contra2}
M^{-1}\le \int_{\Omega'}\dist(\D u_j,\Gamma)^n \d x \le  \int_{\Omega'} |\D u_j - A_j|^n \d x =  \int_{\Omega'} |\D f_j|^n \d x \to 0,
\end{equation}
a contradiction. Therefore, the limit map $f$ is non-constant. Furthermore, the weak convergence of the Jacobians invoked in the proof of Lemma \ref{lemma:qrconst} implies that $f$ is $K$-quasiregular as well (but see also \cite[Theorem 8.10.1]{Iwaniec2001}).  Moreover, due to Theorem \ref{thm:reshet}, we see that $f_j$ and $f$ are continuous for all $j$ and that $(f_j)_j$ converges in the $C^0$ topology on every compact set towards $f$.  Theorem \ref{thm:reshet} tells us that the assumptions of Lemma \ref{lemma:qrnonconst} are fulfilled, and we immediately reach a contradiction.



\medskip
\textbf{Case 2:} assume that $\liminf_{j\to \infty} |A_j| \to \infty$. We introduce the normalized maps
\[
g_j \equiv \frac{f_j}{|A_j|}.
\]
Up to non-relabeled subsequences, $(g_j)_j$ is a sequence of $K$-quasiregular mappings strongly converging in $W^{1,n}(\Omega,\R^n)$ to a linear map $g(x) = Bx$ for some $|B| = 1$. As above, we find that $g$ is quasiregular, since it is a limit of quasiregular mappings, which simply means that $\det(B) \ge K|B|^n = K$. In particular, $g$ is a homeomorphism, and Lemma \ref{lemma:qrnonconst} shows that $N(g_j,\Omega')$ is equibounded in $j$. However,
\[
N(g_j,\Omega') = N(|A_j|g_j,\Omega') = N(f_j,\Omega'),
\]
and we find a contradiction with $N(f_j,\Omega') \ge j$ for all $j$ in this case as well.
\end{proof}

\subsection{Proof of Theorem \ref{thm:qttive}}

Let $\delta > 0$. We can assume without loss of generality that $\Omega' \equiv \{x \in \Omega: \dist(x,\partial \Omega) > \delta\}$ and $U \equiv\{x \in \Omega: \dist(x,\partial \Omega) > \delta/2\}$, where we also assume $\delta$ to be sufficiently small so that $\Omega'$ and $U$ are nonempty. 
With this choice, clearly $\Omega' \Subset U \Subset \Omega$. We also let $w \equiv \dist(Du,\Gamma)^n$. The proof of Theorem \ref{thm:main} and in particular \eqref{eq:revholderdist} shows the following: there exists $\eps, C$ such that
\begin{equation}\label{eq:Ceps}
\left(\fint_B w^{1+\eps} \d x\right)^{\frac{1}{1+\eps}}\leq C \fint_B w\d x, \quad \text{for all balls $B$ with $2B\subset U$}.
\end{equation}
In particular, $\eps$ does not depend on $\delta$, but $C$ does. Indeed, as can be seen in \eqref{eq:prefinale}, $C$ depends on $N(u-\varphi,U)$, where $\varphi(x)= Ax$ for some $A \in \Gamma$. However, Proposition \ref{prop:unimultbounds} (employed with $U$ instead of $\Omega'$) tells us that $N(u-\varphi,U)$ can be bounded by a constant only depending on $n,K,\Gamma,M$ and $U$, which in turn only depends on $\delta$. Thus, Theorem \ref{thm:Ap} (employed with $U$ instead of $\Omega)$ implies the existence of $p > 1$ and $c > 0$, both depending on $n,K,\Gamma,M$ and $\delta$ such that
\begin{equation}\label{eq:ap}
\fint_{B} w^{1-p}(x)\d x \le c \left(\fint_{B}w(x)\d x\right)^{1-p},
\end{equation}
for all balls $B$ such that $32B \subset U$. We define $r_0 \equiv \frac{\delta}{128}$. Notice that, with this choice, for any $x \in \overline{\Omega'}, B_{32r_{0}}(x) \subset U$.  If we manage to show that there exists a constant $$c = c(n,K,\Gamma,M, K) > 0$$ such that for every $u \in X_M$ and for every $B = B_{r_0}(x), x \in \overline{\Omega'}$,  we have
\begin{equation}\label{eq:frombelow}
\int_B \dist(\D u, \Gamma)^n \d x \ge c > 0,
\end{equation}
then a simple covering argument that uses the compactness of $\overline {\Omega'}$ allows us to conclude the bound \eqref{eq:unif} through \eqref{eq:ap}. 

The argument to show \eqref{eq:frombelow} is again by contradiction. Assume there exist $u_j \in X_M$ and $x_j \in \overline{\Omega'}$ such that
\[
\int_{B_{r_0}(x_j)} \dist(\D u_j, \Gamma)^n \d x \le j^{-1}.
\]
Since $\Gamma$ satisfies a   rigidity estimate, we find matrices $A_j \in \Gamma$ such that
\begin{equation}\label{eq:ncontr}
\int_{B_{r_0/2}(x_j)} |\D u_j - A_j|^n\d x \le C_\Gamma \int_{B_{r_0}(x_j)} \dist(\D u_j, \Gamma)^n \d x \to 0.
\end{equation}
Set $f_j \equiv u_j - \varphi_j$, where as before $\varphi_j(x) \equiv A_jx$. We can assume that $x_j \to x_0$. Moreover, we have that $(A_j)_j$ is equibounded. Indeed, \eqref{eq:ncontr} implies
\begin{align*}
c_n r_0^n |A_j|^n & \leq \int_{B_{r_0/2}(x_j)} |A_j|^n\d x \\
& \le C\left[ \int_{B_{r_0}(x_j)} \dist(\D u_j, \Gamma)^n \d x + \int_\Omega|\D u_j|^n\d x\right]\leq C(1+M),
\end{align*}
where  $C = C(\Gamma,n,\Omega) > 0$. 
Thus, up to non-relabeled subsequences, we can also assume that $f_j \w f$ in $W^{1,n}_\loc(\Omega,\R^n)$. We will now show that $f$ is a constant, which then yields a contradiction exactly as in \eqref{eq:contra2}. To show that $f$ is constant, we simply use \eqref{eq:ncontr} to deduce that
\[
\int_{B_{r_0/4}(x_0)} |\D f|^n\d x \le \liminf_{j\to \infty} \int_{B_{r_0/4}(x_0)} |\D f_j|^n\d x \le \liminf_{j\to \infty} \int_{B_{r_0/2}(x_j)} |\D f_j|^n\d x \overset{\eqref{eq:ncontr}}{=} 0.
\]
Therefore, $f$ is constant on an open subset of $\Omega$. If it were non-constant on $\Omega$, then $f$ would be open by Theorem \ref{thm:reshet},  which is clearly impossible. This concludes the proof.

\section{Application to the Nonlinear Beltrami Equation}
\label{sec:belt}

The purpose of this short section is to prove Corollary \ref{cor:nonlinbeltr}, which we restate here for the reader's convenience:

\begin{theorem}\label{teo:nonlinbelt}
Let $\Gamma\subset \tp{CO}(2)$ be a $K$-elliptic curve, and let $u \in  W_{\loc}^{1,2}(\Omega,\C)$ be a solution to 
\begin{equation}
\label{eq:b}
\p_{\bar z} u = \mu \dist(\p_z u,\Gamma), \qquad \|\mu\|_{L^\infty(\Omega)}\leq \frac{K-1}{K+1}.
\end{equation}
 Then either $u$ is affine or $\dist(\partial_z u, \Gamma) \neq 0$ a.e.\ in $\Omega$.
\end{theorem}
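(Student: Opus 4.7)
The plan is to deduce this directly from Theorem \ref{thm:main} applied to $\Gamma$ viewed as a subset of $\R^{2\times 2}$. To do so I need two ingredients: first, that $\Gamma$ satisfies a rigidity estimate, which is exactly the content of \cite{Lamy2023} for $K$-elliptic curves; and second, a dictionary between the nonlinear Beltrami equation \eqref{eq:b} and the differential inclusion $\D u \in \mathcal{E}_\Gamma$, together with a dictionary between $\dist(\D u,\Gamma)$ and $\dist(\p_z u, \Gamma)$. Once these are in place, the dichotomy in Theorem \ref{thm:main} transfers immediately to the statement.

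For the first dictionary I would use the characterization recalled right after Question \ref{Q}: $\D u \in \mathcal{E}_\Gamma$ a.e. is equivalent to saying that $u - \varphi_A$ is $K$-quasiregular for every $A\in \Gamma$, where $\varphi_A$ denotes the linear map associated to $A$. In Wirtinger form, $K$-quasiregularity of $u-\varphi_A$ reads
\[
|\p_{\bar z}(u-\varphi_A)|\le k\,|\p_z(u-\varphi_A)|,\qquad k=\tfrac{K-1}{K+1}.
\]
Identifying $A\in \Gamma\subset\tp{CO}(2)$ with the corresponding $a\in\C$ (so $\varphi_A(z)=az$), this reduces to $|\p_{\bar z}u|\le k\,|\p_z u-a|$. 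But by \eqref{eq:b} we have
\[
|\p_{\bar z}u|=|\mu|\,\dist(\p_z u,\Gamma)\le k\,\dist(\p_z u,\Gamma)\le k\,|\p_z u-a|
\]
for every $a\in \Gamma$, which is exactly what we need. Hence $\D u\in \mathcal{E}_\Gamma$ a.e.\ in $\Omega$.

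For the second dictionary, the key point is that $\tp{CO}(2)$ corresponds to $\C$-linear maps, acting on $h\in\C$ by $h\mapsto ah$. Any $X\in\R^{2\times 2}$ decomposes as $X(h)=p\,h+q\,\bar h$ with $p,q$ its Wirtinger components, and a direct computation gives the operator norm identity $|X|=|p|+|q|$. Therefore, for $A\leftrightarrow a\in \Gamma$,
\[
|\D u - A|=|\p_z u-a|+|\p_{\bar z}u|,
\]
and taking the infimum over $a\in\Gamma$ yields
\[
\dist(\D u,\Gamma)=\dist(\p_z u,\Gamma)+|\p_{\bar z}u|.
\]
Combined with \eqref{eq:b}, this gives the two-sided bound
\[
\dist(\p_z u,\Gamma)\le \dist(\D u,\Gamma)\le (1+k)\,\dist(\p_z u,\Gamma),
\]
so the two distances vanish on exactly the same set.

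With these in hand the conclusion is immediate: Theorem \ref{thm:main} applied to $\Gamma$ yields that either $\dist(\D u,\Gamma)>0$ a.e., in which case $\dist(\p_z u,\Gamma)>0$ a.e.\ by the equivalence above, or $\dist(\D u,\Gamma)=0$ a.e.\ and $u$ is affine. I do not expect a substantive obstacle: the only subtle point is the operator-norm identity $|X|=|p|+|q|$ for the Wirtinger decomposition, which is classical in quasiconformal theory, and the fact that the elliptic curve $\Gamma$ satisfies a rigidity estimate, which is imported from \cite{Lamy2023}.
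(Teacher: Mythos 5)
Your proposal is correct and follows essentially the same route as the paper: import the rigidity estimate for elliptic curves from \cite{Lamy2023}, translate \eqref{eq:b} into the inclusion $\D u\in\mc E_\Gamma$ via the Wirtinger identities \eqref{eq:confcoords} (the paper's Lemma \ref{lem:tec} proves the full equivalence, while you only prove the direction needed), and then apply Theorem \ref{thm:main}. Your explicit identity $\dist(\D u,\Gamma)=\dist(\p_z u,\Gamma)+|\p_{\bar z}u|$, showing the two distances vanish on the same set, just makes precise a step the paper leaves implicit.
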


As we will see,  Theorem \ref{teo:nonlinbelt} is a simple consequence of Theorem \ref{thm:main}; we also note that Theorem \ref{thm:qttive} provides an obvious quantitative version of Theorem \ref{teo:nonlinbelt}, which we will not state.

We start by recalling the  definition of the Wirtinger derivatives.  For $f\in W^{1,1}(\Omega,\R^2)$, which we write as $f=(f_1,f_2)$ in components,  we set
\[
\partial_zf \equiv \frac{1}{2}[(\partial_1f_1 + \partial_2f_2) + i(\partial_1f_2 - \partial_2f_1)]\; \text{ and }\; \partial_{\bar{z}}f \equiv \frac{1}{2}[(\partial_1f_1 - \partial_2f_2) + i(\partial_1f_2 + \partial_2f_1)].
\]
These derivatives allow us to identify $\D f \in \R^{2 \times 2}$ with a pair $(\p_z f, \p_{\bar z} f)\in \C^2$.  We note that this identification satisfies the rules
\begin{equation}
\label{eq:confcoords}
|\D f| = |\p_z f|+|\p_{\bar z} f|, \qquad \det \D f = |\p_z f|^2 - |\p_{\bar z} f|^2.
\end{equation}

Theorem \ref{teo:nonlinbelt} is an immediate consequence of Theorem \ref{thm:main} and the following lemma:

\begin{lemma}\label{lem:tec}
Let $\Gamma\subset \tp{CO}(2)$ be a $K$-elliptic curve.  A map  $u \in  W_\loc^{1,2}(\Omega,\R^2)$ is a solution to \eqref{eq:b} if and only if $\D u \in \mc E_\Gamma$ a.e.\ in $\Omega$.
\end{lemma}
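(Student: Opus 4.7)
The plan is to reduce the matrix inequality defining $\mc{E}_\Gamma$ to a scalar inequality via the Wirtinger identification $\D f \leftrightarrow (\p_z f, \p_{\bar z} f) \in \C^2$. The essential observation is that matrices in $\tp{CO}(2)$ correspond, under this identification, to pairs of the form $(a,0)$: indeed, $A \in \tp{CO}(2)$ is realized as a $\C$-linear map $z\mapsto az$ for some $a \in \C$. Thus I can identify the curve $\Gamma\subset \tp{CO}(2)$ with a closed curve in $\C$, still denoted $\Gamma$, and write each $A \in \Gamma$ as $(a,0)$.

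For $X = \D u$ and $A \in \Gamma$, the matrix $X - A$ corresponds to $(\p_z u - a, \p_{\bar z} u)$. Using the rules \eqref{eq:confcoords}, I compute
\[
|X - A| = |\p_z u - a| + |\p_{\bar z} u|, \qquad \det(X - A) = |\p_z u - a|^2 - |\p_{\bar z} u|^2.
\]
Substituting these into the defining inequality $|X - A|^n \le K\det(X-A)$ with $n=2$ and factoring the right-hand side as $K(|\p_z u - a| - |\p_{\bar z} u|)(|\p_z u - a| + |\p_{\bar z} u|)$, simple algebra reduces the condition (in the non-degenerate case $|\p_z u - a| + |\p_{\bar z} u| > 0$) to the scalar inequality
\[
(K+1)|\p_{\bar z} u| \le (K-1)|\p_z u - a|.
\]
The degenerate case is automatic since both sides of the matrix inequality vanish.

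Taking the infimum over $a \in \Gamma$ on the right-hand side of this scalar inequality then gives the equivalence
\[
\D u \in \mc E_\Gamma \text{ a.e.\ } \iff |\p_{\bar z} u| \le \tfrac{K-1}{K+1}\dist(\p_z u, \Gamma) \text{ a.e.}
\]
The last assertion is in turn equivalent to \eqref{eq:b}: one direction is immediate from $|\mu| \le (K-1)/(K+1)$, and the converse follows by defining $\mu \equiv \p_{\bar z} u/\dist(\p_z u,\Gamma)$ where the denominator is positive and $\mu \equiv 0$ otherwise, which is a measurable function of modulus at most $(K-1)/(K+1)$.

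The only place where one must be mildly careful is in the reverse direction of the scalar reduction: to multiply through by $|\p_z u - a| + |\p_{\bar z} u|$ one needs to know the resulting right-hand side is non-negative, i.e.\ that $\det(X-A) \ge 0$. This is automatic because $(K-1)/(K+1) < 1$ forces $|\p_{\bar z} u| \le |\p_z u - a|$. Beyond this minor point, the lemma is a direct computation and presents no real obstacle.
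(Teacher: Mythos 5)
Your proof is correct and follows essentially the same route as the paper: both reduce the membership $\D u \in \mc E_\Gamma$, via the Wirtinger identification and the identities \eqref{eq:confcoords}, to the scalar inequality $|\p_{\bar z} u| \le \tfrac{K-1}{K+1}\dist(\p_z u,\Gamma)$ by taking the infimum over the curve, and then produce $\mu$ exactly as the paper does. (Your worry about $\det(X-A)\ge 0$ in the reverse step is harmless but unnecessary, since multiplying the scalar inequality by the non-negative factor $|\p_z u - a| + |\p_{\bar z} u|$ is valid regardless.)
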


\begin{proof}
Let $\Gamma=\{\gamma(t):t\in [0,1]\}$ and set $u_t(z)\equiv u(z)-\gamma(t)z$.  Using \eqref{eq:confcoords} and Definition \eqref{eq:Kqc},  we see that for any $z\in \Omega$ we have 
$$\D u (z)\in \mc E_\Gamma \quad \iff \quad |\p_{\bar z}(u_t(z))| \leq \frac{K-1}{K+1}  |\p_{z}(u_t(z))|\quad \forall t\in [0,1].$$
Since $\Gamma\subset \tp{CO}(2)$, we have $\p_{\bar z}u_t = \p_{\bar z} u$,  and hence taking the infimum over $t$ we see that 
\begin{equation}
\label{eq:envelopecalc}
\D u (z)\in \mc E_\Gamma \quad \iff \quad |\p_{\bar z}u(z)| \leq \frac{K-1}{K+1}  \dist(\p_z u(z), \Gamma).
\end{equation}
Hence it is clear that any solution to \eqref{eq:b} is a solution to the differential inclusion \eqref{eq:incEgamma}. Conversely, given a solution to the differential inclusion, we obtain a solution to \eqref{eq:b} by setting $\mu(z) \equiv \frac{\p_{\bar z}u(z)}{\dist(\p_z u(z), \Gamma)}$ if the denominator is non-zero and $\mu(z) = 0$ otherwise, which satisfies the required bounds.
\end{proof}

\begin{proof}[Proof of Theorem \ref{teo:nonlinbelt}]
By \cite{Lamy2023}, $\Gamma$ satisfies a rigidity estimate. By the lemma,  \eqref{eq:b} can be rewritten as the differential inclusion \eqref{eq:incEgamma}, and the conclusion follows from Theorem \ref{thm:main}.
\end{proof}

Note that the proof of Corollary \ref{cor:jaask} is exactly the same: in that case,  we have $\Gamma=\tp{span}(\Id)$ and it is straightforward to check that 
\[
\dist(\D u(z),\Gamma) = |\Im(\partial_{ z}u(z))| + |\partial_{\bar z}u(z)| = (1-|\mu(z)|)\left|\Im(\partial_{z}u(z))\right|.
\]
In this setting, in fact, one can avoid using the results from \cite{Lamy2023}, since the needed rigidity estimate is linear and is therefore a corollary of the classical Korn inequality, as used for instance in \cite[Step 3, Proposition 3.1]{Friesecke2002}.

\section{Application to the Monge--Amp\`ere equation}
\label{sec:MA}

Let $O \subset \R^n$ be a convex set with $B_1(0) \subset O \subset B_n(0)$, and let $\varphi\colon O \to \R$ be the unique convex function solving 
\begin{equation}\label{MAb}
\begin{cases}
\lambda^{-1}\d x \le \mu_\varphi \le \lambda \d x,\\
\varphi = 0 \text{ on }\partial O,
\end{cases}
\end{equation}
where $\lambda \ge 1$ and $\mu_\varphi$ is the Monge--Amp\`ere measure associated to $\varphi$. 
For a given $t>0$, we consider the corresponding section
\[
Z_{t} \equiv \left\{x \in O: \varphi(x)\le -t^{-1} \|\varphi\|_{L^\infty(O)}\right\}.
\]
The purpose of this section is to prove the following result:

\begin{theorem}\label{thm:MA}
Let $n = 2$, let $O$ be an open convex set satisfying $B_1\subset O \subset B_2$ and let $\varphi$ be the convex solution to \eqref{MAb}. 
There are constants $C,\varepsilon > 0$,  depending only on $\Lambda$, such that
\begin{equation}\label{eq:1eps}
\|\D^2\varphi\|_{L^{1+\varepsilon}(Z_{2})} \le C.
\end{equation}
\end{theorem}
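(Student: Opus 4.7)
The plan is to follow the strategy already anticipated in the introduction: use the Minty-type correspondence of Ambrosio--Figalli--De Philippis to build a quasiconformal-envelope map $v$ associated to $\varphi$, transport the $W^{2,1+\varepsilon}$ estimate on $\varphi$ to a negative-exponent integrability estimate for $\dist(\D v,\SO(2))$, and then apply Theorem \ref{thm:qttive}.

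First I introduce $\Phi_1(w) \equiv (\D\varphi(w) - w)/\sqrt{2}$ and $\Phi_2(w) \equiv (\D\varphi(w) + w)/\sqrt{2}$, set $\Omega \equiv \Phi_2(O)$, and put $v \equiv \overline{\Phi_1}\circ \Phi_2^{-1}$. By Lemma \ref{lemma:qcenvelope} (stated later in this section), $v \in W^{1,2}_\loc(\Omega,\R^2)$ with $|\D v|\le 1$ and $\D v \in \mc{E}_{\SO(2)}$ a.e., where the constant $K$ appearing in \eqref{eq:Kqc} is $K=\lambda$. Next I invoke Lemma \ref{lem:linked}, which gives the pointwise relation
\[
|\D^2\varphi(x)| \sim_\lambda \dist(\D v(\Phi_2(x)),\SO(2))^{-1}
\]
for a.e.\ $x\in O$, and perform a change of variables through $\Phi_2$ (noting $\D\Phi_2 = (\D^2\varphi + \Id)/\sqrt{2}$, so that the Jacobian factor is easily controlled) to convert \eqref{eq:1eps} into the equivalent statement that $\dist(\D v,\SO(2))^{-\varepsilon}$ is integrable on $\Phi_2(Z_2)$, with constants depending only on $\lambda$.

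With this reduction in hand, I apply Theorem \ref{thm:qttive} with $n=2$, $\Gamma=\SO(2)$, $K=\lambda$, and a suitable open set $\Omega'\Subset \Omega$ containing $\Phi_2(Z_2)$. The upper bound $\int_\Omega |\D v|^2 \le M$ is immediate from $|\D v|\le 1$ together with the Aleksandrov bound on $\|\D\varphi\|_{L^\infty(O)}$, which controls the size of $\Omega$. The lower bound $\int_{\Omega'}\dist(\D v,\SO(2))^2 \ge M^{-1}$ I will obtain by a compactness-and-contradiction argument: if it failed along a sequence of solutions $\varphi_j$ with the same normalization $B_1\subset O_j\subset B_2$, a limiting map $v_\infty$ would satisfy $\D v_\infty \in \SO(2)$ a.e., hence by Theorem \ref{thm:main} would be affine, forcing the limit $\varphi_\infty$ to be a quadratic polynomial. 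Such a polynomial cannot simultaneously vanish on the boundary of a bounded convex set and have Monge--Amp\`ere measure bounded between $\lambda^{-1}$ and $\lambda$, yielding a contradiction and hence a uniform lower bound $M = M(\lambda)$.

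The main obstacle I expect is the careful handling of the localization: I must guarantee that $\Phi_2(Z_2)\Subset \Omega$ with separation from $\partial\Omega$ depending only on $\lambda$, so that $\varepsilon$ and $C$ in Theorem \ref{thm:qttive} end up depending only on $\lambda$. This is provided by the classical Caffarelli--Guti\'errez theory of sections of Monge--Amp\`ere, to be recalled earlier in this section: $Z_2$ is comparable to an ellipsoid universally compactly contained in $O$, and $\Phi_2$ is a bi-Lipschitz homeomorphism with $\lambda$-controlled constants on such regions. Once these Monge--Amp\`ere preliminaries are in place, combining them with Theorem \ref{thm:qttive} via the dictionary of Lemma \ref{lem:linked} gives the desired estimate \eqref{eq:1eps}.
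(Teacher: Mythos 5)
Your overall reduction (Minty map, Lemma \ref{lem:linked}, change of variables with $\det \D\Phi_2 \gtrsim |\D^2\varphi|$, then negative-power integrability of $\dist(\D v,\SO(2))$) matches the paper, and the paper itself notes that Theorem \ref{thm:qttive} could be used at this stage. The genuine gap is in your verification of the lower bound in \eqref{eq:bdddistance}. Your contradiction argument is wrong on both ends. First, if a limit map $v_\infty$ is affine with $\D v_\infty \in \SO(2)$, this does \emph{not} force the limiting convex function to be a quadratic polynomial: by Lemma \ref{lem:linked}, $\dist(\D v,\SO(2))=0$ corresponds to a \emph{degenerate} Hessian (one eigenvalue tending to $0$, the other to $\infty$), and under Minty's correspondence a constant gradient in $\SO(2)$ corresponds to a maximal monotone graph contained in a line, i.e.\ to a degenerate limit of the $\varphi_j$, not to a paraboloid. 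Second, the fact you invoke to conclude is false: a quadratic polynomial \emph{can} vanish on the boundary of a bounded convex set and have Monge--Amp\`ere measure in $[\lambda^{-1},\lambda]$ --- take $\varphi(x)=(|x|^2-1)/2$ on $B_1$. So the contradiction you describe does not exist as stated; a correct version would have to argue that the degenerate graph is incompatible with the limit being a finite Alexandrov solution on a normalized domain, which requires additional work (stability of the Monge--Amp\`ere measure, convergence of the Minty maps under varying domains). The paper avoids all of this by proving the lower bound \eqref{eq:beb} \emph{directly and quantitatively}: by Lemma \ref{lem:linked} and $\det\D\Phi_2 \ge |\D^2\varphi|/2$ one gets $\int_B \dist(\D u,\mc S)\,\d x \gtrsim_\lambda |\Phi_2^{-1}(B)|$, and $|\Phi_2^{-1}(B)|$ is bounded below via Caffarelli's $C^{1,\alpha}$ estimate (Theorem \ref{thm:C1a}, Corollary \ref{cor:holdser}) together with the section separation of Lemma \ref{lemma:firstclaim}.

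A secondary issue you flag but do not resolve: the constants in Theorem \ref{thm:qttive} (through Proposition \ref{prop:unimultbounds} and the bound \eqref{eq:frombelow}) are produced by compactness arguments on a \emph{fixed} pair $\Omega'\Subset\Omega$, whereas here $\Omega=\Phi_2(O)$ and $\Omega'$ vary with $\varphi$; uniform separation of $\Phi_2(Z_2)$ from $\partial\Phi_2(Z_4)$ is not by itself enough to conclude that $\varepsilon$ and $C$ depend only on $\lambda$ (one would, e.g., have to apply the theorem on fixed-size balls and re-examine the dependence of the constants). The paper sidesteps this entirely with Lemma \ref{lem:uShomeo}: $u-S$ is a homeomorphism for every $S\in\mc S$, so the multiplicity is exactly $1$, the reverse H\"older inequality of Lemma \ref{lemma:revholder} applies with constants depending only on $\lambda$, and Theorem \ref{thm:Ap} then yields \eqref{eq:invMA} with $\lambda$-only constants. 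To make your route rigorous you would need either this homeomorphism observation or an explicit uniformization of the constants in Theorem \ref{thm:qttive}, plus a corrected proof of the lower bound.
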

We note that  Theorem \ref{thm:MA}, combined with standard covering arguments,  yields Corollary \ref{cor:MA}, just as  in \cite[Theorem 1.1]{DePhilippis2013a}.

\subsection{Setup}

We start by remarking that, to prove Theorem \ref{thm:MA} and Corollary \ref{cor:MA},  it is sufficient to prove an a priori $W^{2,1+\e}$-estimate. Indeed, if we write $\mu_\varphi = g\d x$ with $\lambda^{-1}\le g \le \lambda$ a.e.\ in $O$, we can mollify $g$ and solve the associated Monge--Amp\`ere problem. The unique solution we get at every step of the mollification is strictly convex by \cite[Theorem 2.19]{Figalli2017} and hence smooth due \cite[Corollary 4.43]{Figalli2017}. Every smooth solution is also bounded a priori in $L^\infty(O)$ by \cite[Theorem 2.8]{Figalli2017}. Due to the uniqueness of solutions to \eqref{MAb}, see \cite[Corollary 2.11]{Figalli2017}, the sequence of smooth solutions obtained in this way converges locally uniformly to the unique $\varphi$ solving $\mu_\varphi = g \d x$, which then inherits the apriori estimates. 

By the previous paragraph, we will now take  $g \in C^\infty(O,[\lambda^{-1},\lambda])$ and assume that $\varphi$ is the smooth, strictly convex solution to 
\begin{equation}\label{MAbeq}
\begin{cases}
\det \D^2 \varphi = g & \text{in } O,\\
\varphi = 0 & \text{on }\partial O.
\end{cases}
\end{equation}
As described in the introduction, set
\begin{equation}\label{eq:phi1}
\Phi_1(w) \equiv  \frac{\D\varphi(w) - w}{\sqrt{2}}\quad  \text{ and } \quad \Phi_2(w) \equiv \frac{\D\varphi(w) + w}{\sqrt{2}}.
\end{equation}
The convexity of $\varphi$ shows that $\Phi_2$ is a homeomorphism of $O$ onto $\Omega\equiv \Phi_2(O)$ so we can define, as in \cite[(13)]{Ambrosio2011},
\begin{equation}\label{eq:udef}
u(z) \equiv \Phi_1\circ\Phi_2^{-1}(z), \quad \forall z \in \Omega.
\end{equation}
This is the so-called Minty's correspondence between monotone and $1$-Lipschitz maps.  For solutions of \eqref{MAbeq} one can check the following, cf.\ \cite[Proposition 4.2]{Ambrosio2011}:
\begin{lemma}
\label{lemma:diffincMA}
Let $\mc A$ be the set of \textit{admissible gradients}
$$\mc A\equiv \Big\{A\in \Sym(2): |A|\leq 1,  |\tr(A)|\leq \frac{\lambda-1}{\lambda+1} (1+ \det A)\Big\}.$$
If $\varphi$ solves \eqref{MAbeq} and $u$ is as in \eqref{eq:udef}, then
\begin{equation}
\label{eq:diffincMA}
\D u(z)\in \mc A\quad  \text{  for all } z\in \Omega.
\end{equation}
In particular, $u$ is 1-Lipschitz.
\end{lemma}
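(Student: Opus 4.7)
The plan is to compute $\D u$ explicitly via the chain rule, verify directly that it is symmetric with operator norm at most $1$, and then reduce the trace inequality defining $\mc A$ to an elementary one-variable estimate. Since $\varphi$ is smooth and strictly convex with $\det \D^2\varphi = g \geq \lambda^{-1}$, the Hessian $H(w) \equiv \D^2\varphi(w)$ is symmetric and positive definite for every $w \in O$. Consequently $\D\Phi_2(w) = (H(w) + \Id)/\sqrt{2}$ is invertible and $\Phi_2$ is a diffeomorphism of $O$ onto $\Omega$. Applying the chain rule to $u = \Phi_1 \circ \Phi_2^{-1}$ and writing $w = \Phi_2^{-1}(z)$,
\[
\D u(z) = \D\Phi_1(w)\,[\D\Phi_2(w)]^{-1} = (H - \Id)(H + \Id)^{-1}.
\]
Since $H - \Id$ and $(H + \Id)^{-1}$ share the eigenbasis of $H$, they commute and their product is symmetric, so $\D u(z) \in \Sym(2)$.

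Let $\mu_1, \mu_2 > 0$ denote the eigenvalues of $H$. Then $\D u(z)$ has eigenvalues $\frac{\mu_i - 1}{\mu_i + 1} \in (-1, 1)$, and since $\D u(z)$ is symmetric its operator norm equals its spectral radius, giving $|\D u(z)| \leq 1$.

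For the last inequality, set $D \equiv \det H = g(w)$ and $T \equiv \tr H > 0$. A direct computation from the two eigenvalues displayed above yields
\[
\det \D u = \frac{D - T + 1}{D + T + 1}, \quad \tr \D u = \frac{2(D - 1)}{D + T + 1}, \quad 1 + \det \D u = \frac{2(D + 1)}{D + T + 1}.
\]
The positive factor $D + T + 1$ cancels, leaving
\[
\frac{|\tr \D u|}{1 + \det \D u} = \frac{|g - 1|}{g + 1}.
\]
The map $s \mapsto |s-1|/(s+1)$ is decreasing on $(0, 1]$ and increasing on $[1, \infty)$, and takes the common value $(\lambda - 1)/(\lambda + 1)$ at $s = \lambda^{-1}$ and $s = \lambda$. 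Since $g \in [\lambda^{-1}, \lambda]$, this yields $|\tr \D u| \leq \frac{\lambda - 1}{\lambda + 1}(1 + \det \D u)$, completing the verification that $\D u(z) \in \mc A$. No serious obstacle is anticipated: the heart of the argument is the algebraic identification of $\tr \D u$ and $\det \D u$ as rational functions of $\det H$ and $\tr H$, followed by an elementary one-variable estimate.
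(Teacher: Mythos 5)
Your proof is correct and follows essentially the same route as the paper, which also observes that $\D u = (\D^2\varphi - \Id)(\D^2\varphi + \Id)^{-1}$ is symmetric with eigenvalues $\frac{\mu_i-1}{\mu_i+1}$ and deduces the trace bound from $\det \D^2\varphi = g \in [\lambda^{-1},\lambda]$. You have merely written out in full the algebra that the paper leaves as a "straightforward calculation."
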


The proof of \cite[Proposition 4.2]{Ambrosio2011} is complicated by the fact that the maps there were  not assumed to be smooth.  In the smooth setting that we consider here,  Lemma \ref{lemma:diffincMA} is a straightforward calculation: indeed, it is easy to see that if $\D^2 \varphi(w)$ has eigenvalues $(\lambda_i(w))_{i=1,2}$, then $\D u(z)$ is a symmetric matrix with eigenvalues $\bigl(\frac{\lambda_i(z)-1}{\lambda_i(z)+1}\bigr)_{i=1,2}$, where $z=\Phi_2^{-1}(w)$. The conclusion then follows directly from \eqref{MAbeq}. We refer the reader to Lemma \ref{lem:linked} below for a similar calculation.

In the rest of this section, we write
\[
A_0 \equiv \begin{pmatrix}
1 & 0 \\ 0 & - 1
\end{pmatrix}
\]
for the conjugation operator, and we consider the set of \textit{singular gradients}
$$\mc{S} \equiv \{M \in \tp O(2): \det(M) < 0\}=
\{RA_0R^{-1}: R\in \tp{SO}(2) \}.$$
As discussed in the introduction (for $v=\bar u$ instead of $u$), our main goal is to estimate the decay of $\dist(\D u,\mc S)$.



The next lemma is a sharper version of the results of \cite{Ambrosio2011}, in particular of \cite[Lemma 7.2]{Ambrosio2011}. The main difference to their work is that we use the operator norm instead of the Euclidean norm in the calculations.

\begin{lemma}\label{lemma:qcenvelope}
With $K=\lambda$, the $K$-quasiconformal envelope of $A_0 \mc S = \SO(2)$ satisfies
$$\mc A \subset A_0 \mc E_{\SO(2)}\cap \{M:|M|\leq 1\}.$$
\end{lemma}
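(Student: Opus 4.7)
The norm containment is immediate: since $A_0$ is orthogonal, $|A_0 A| = |A| \le 1$ whenever $A \in \mc A$. Hence the main task is to show that for every symmetric $A \in \mc A$ and every $R \in \SO(2)$,
$$|R - A_0 A|^2 \le \lambda \det(R - A_0 A).$$

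The plan is to reformulate this inequality using the standard orthogonal decomposition $\R^{2\times 2} = \tp{CO}(2) \oplus A_0\,\tp{CO}(2)$. Writing $M = C_+ + A_0 C_-$ with $C_\pm \in \tp{CO}(2)$, one has the identities $|M| = |C_+| + |C_-|$ (operator norm) and $\det M = |C_+|^2 - |C_-|^2$; I would check these by computing singular values in the basis $\{I, J, A_0, JA_0\}$, where $J$ denotes rotation by $\pi/2$. Consequently the distortion inequality $|M|^2 \le \lambda \det M$ is equivalent to the scalar comparison
$$|C_-| \le \frac{\lambda - 1}{\lambda + 1}\,|C_+|.$$

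Next I would apply this to $M = R - A_0 A$. Decomposing the symmetric matrix $A$ as $aI + \gamma A_0 + \delta\, JA_0$ and using $A_0 JA_0 = -J$, a short computation yields $A_0 A = \gamma I - \delta J + a A_0$. Parametrizing $R = \cos\theta\, I + \sin\theta\, J$, the conformal and anticonformal parts of $R - A_0 A$ satisfy
$$|C_+|^2 = (\cos\theta - \gamma)^2 + (\sin\theta + \delta)^2, \qquad |C_-| = |a|.$$
Setting $r = \sqrt{\gamma^2 + \delta^2}$, the eigenvalues of $A$ are $a \pm r$, so $|A| = |a| + r \le 1$ gives $r \le 1$, and minimizing over $\theta$ yields $\min_\theta |C_+| = 1 - r$. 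Thus the quasiconformality of $R - A_0 A$ for every $R \in \SO(2)$ reduces to the single inequality
$$|a| \le \frac{\lambda - 1}{\lambda + 1}(1 - r).$$

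The final step, which I expect to be the one piece of the argument requiring genuine care, is to derive this inequality from the two defining conditions of $\mc A$. Using $\det A = a^2 - r^2$ and $\tr A = 2a$, the trace constraint becomes $2|a| \le \frac{\lambda - 1}{\lambda + 1}(1 + a^2 - r^2)$. The identity
$$1 + a^2 - r^2 = 2(1 - r) - (1 - r - |a|)(1 - r + |a|),$$
combined with the norm bound $|a| + r \le 1$ (which makes both factors on the right nonnegative), gives $1 + a^2 - r^2 \le 2(1 - r)$. Substituting back and dividing by $2$ yields the required bound, completing the argument.
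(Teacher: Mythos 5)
Your proof is correct and follows essentially the same route as the paper: both pass to the conformal--anticonformal splitting of $\R^{2\times 2}$ (you in the real basis $\{I,J,A_0,JA_0\}$, the paper in complex coordinates $A\leftrightarrow(a_+,a_-)$), reduce the distortion inequality for $R-A_0A$ over all $R\in\SO(2)$ to the scalar bound $|a|\le\tfrac{\lambda-1}{\lambda+1}(1-r)$, and then deduce it from the two defining conditions of $\mc A$. The only difference is the final algebra: the paper bounds the factor $\tfrac{1+|a_-|}{2-k|a_+|}$ by $1$ using $|a_-|+k|a_+|\le 1$, while you use the identity $1+a^2-r^2=2(1-r)-(1-r-|a|)(1-r+|a|)$; both are equally valid.
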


\begin{proof}
For simplicity, set $k\equiv \frac{K-1}{K+1}$. Let us first compute explicitly $\mc E_{\SO(2)}$: by \eqref{eq:envelopecalc}, we have
$$\mc E_{\SO(2)}=\left\{A\in \R^{2\times 2}: |a_-|\leq k \left|1-|a_+|\right|\right\},$$
where we identify $A\in \R^{2\times 2}$ with $(a_+,a_-)\in \C^2$ as described in Section \ref{sec:belt}, i.e.\ through $A(z) = a_+ z + a_- \bar z$.   
We note the identities 
\begin{equation}
\label{eq:confidentities}
\tr(A) = 2 \Re a_+, \quad \det A = |a_+|^2- |a_-|^2, \quad |A|= |a_+|+|a_-|, 
\end{equation}
together with the fact that $A\in \Sym(2)$ if and only if $a_+\in \R$.  In particular,
$$A_0 \mc E_{\SO(2)}\cap \{M:|M|\leq 1\} = \left\{A\in \R^{2\times 2}: |a_+|\leq k (1-|a_-|)\right\}.$$
From \eqref{eq:confidentities}, with $\lambda=K$, we see that any $A\in \mc A$ satisfies
\begin{equation}
\label{eq:condA}
 |a_+|  \leq k(1-|a_-|) \frac{1+|a_-|}{2- k |a_+|}, \qquad |a_+|+|a_-|\leq 1.
\end{equation}
Due to the second condition in \eqref{eq:condA} and the fact that $k<1$ we see that $|a_-|+k |a_+|\leq 1$ holds, which in turn implies
$$\frac{1+|a_-|}{2- k |a_+|}\leq 1.$$
This shows that any $A\in \mc A$ satisfies $|a_+|\leq k (1-|a_-|)$, and the conclusion follows.
\end{proof}

\subsection{Preliminary results on sections}

Sections of convex functions play a key role in the regularity theory for the Monge--Amp\`ere equation.  In Theorem \ref{thm:MA} we assume that the convex domain $O$ is normalized, in the sense that $B_1\subset O\subset B_2$,  and so there are universal estimates on the behavior of the solution. In particular we have the following result, which the reader can find in \cite[Corollary 4.5]{Figalli2017}: 

\begin{proposition}\label{prop:sections}
In the setting of Theorem \ref{thm:MA}, there is $\delta = \delta(\lambda) > 0$ with $\dist(Z_2,\partial Z_4) \ge \delta .$
\end{proposition}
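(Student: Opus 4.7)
The strategy is to combine three ingredients: (a) uniform two-sided bounds on $\|\varphi\|_{L^\infty(O)}$; (b) the fact that the section $Z_4$ is compactly contained in $O$ with a quantitative distance to $\partial O$; and (c) an interior Lipschitz bound for convex functions. The constants will only depend on $\lambda$ because $O$ is already normalized, $B_1\subset O\subset B_2$.

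First I would show that $c_1(\lambda)\le \|\varphi\|_{L^\infty(O)}\le c_2(\lambda)$ by comparison with explicit quadratic barriers. Indeed, on the ball $B_2(x_0)\supset O$ (for any $x_0\in O$), the function $\tilde\varphi(x)=\tfrac{\sqrt{\lambda}}{2}(|x-x_0|^2-4)$ satisfies $\det\D^2\tilde\varphi=\lambda\ge \mu_\varphi$ and $\tilde\varphi\le 0=\varphi$ on $\partial O$, so the Monge--Amp\`ere comparison principle gives $\varphi\ge \tilde\varphi\ge -2\sqrt{\lambda}$. For the lower bound, on $B_1\subset O$ the function $\psi(x)=\tfrac{\lambda^{-1/2}}{2}(|x|^2-1)$ has $\det\D^2\psi=\lambda^{-1}\le \mu_\varphi$ and $\psi\ge \varphi$ on $\partial B_1$, hence $\varphi\le\psi$ on $B_1$, giving $\|\varphi\|_{L^\infty(O)}\ge |\psi(0)|=\tfrac{1}{2\sqrt{\lambda}}$.

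Next, the Aleksandrov maximum principle yields
\[
|\varphi(x)|^2\le C\,\diam(O)\,\dist(x,\partial O)\,\mu_\varphi(O)\le C(\lambda)\dist(x,\partial O)
\]
for every $x\in O$. Applied at any $x\in Z_4$, where $|\varphi(x)|\ge \tfrac14\|\varphi\|_{L^\infty}\ge \tfrac{c_1(\lambda)}{4}$, this forces
\[
\dist(Z_4,\partial O)\ge c_3(\lambda)>0.
\]
Once $Z_4$ sits at a definite distance from $\partial O$, the classical fact that a convex function bounded by $M$ on a domain is Lipschitz on interior sets with constant $\lesssim M/\dist(\cdot,\partial O)$ yields a Lipschitz constant $L(\lambda)$ for $\varphi$ on an open neighborhood $V$ of $Z_4$ with $Z_4\subset V$ and $\dist(V,\partial O)\ge c_3(\lambda)/2$.

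Finally, I would use the convexity of the sections. Any $x\in Z_2$ and $y\in\partial Z_4$ both belong to the convex set $Z_4$, so the segment $[x,y]$ lies in $Z_4\subset V$ where the Lipschitz bound holds. On the other hand the values satisfy
\[
\varphi(y)-\varphi(x)\ge -\tfrac14\|\varphi\|_{L^\infty}+\tfrac12\|\varphi\|_{L^\infty}=\tfrac14\|\varphi\|_{L^\infty}\ge \tfrac{c_1(\lambda)}{4},
\]
so combining with $|\varphi(x)-\varphi(y)|\le L(\lambda)|x-y|$ gives $|x-y|\ge \tfrac{c_1(\lambda)}{4L(\lambda)}\eqqcolon\delta(\lambda)>0$. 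The main subtlety is just to verify that one is allowed to apply the interior Lipschitz estimate along the whole segment $[x,y]$; the convexity of the sections $Z_t=\{\varphi\le \text{const}\}$ takes care of this automatically.
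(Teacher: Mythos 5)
Your argument is correct, but note that the paper does not prove this proposition at all: it simply cites \cite[Corollary 4.5]{Figalli2017}, so what you have written is essentially a self-contained reproduction of the standard textbook argument behind that citation. Your chain of ingredients is the right one and each step goes through: barriers give $\tfrac{1}{2\sqrt\lambda}\le\|\varphi\|_{L^\infty(O)}\le 2\sqrt\lambda$ (using $n=2$ so that $\det(\sqrt\lambda\,\Id)=\lambda$); the two-dimensional Aleksandrov estimate $|\varphi(x)|^2\lesssim \diam(O)\,\dist(x,\partial O)\,\mu_\varphi(O)$ then forces $\dist(Z_4,\partial O)\ge c_3(\lambda)$; the interior subgradient bound for convex functions gives a Lipschitz constant $L(\lambda)$ on $Z_4$; and since $Z_4$ is convex the segment joining $x\in Z_2$ to $y\in\partial Z_4$ stays where that bound holds, so the value gap $\tfrac14\|\varphi\|_{L^\infty}$ yields $|x-y|\ge\delta(\lambda)$. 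One small slip to fix: for $x_0\in O$ arbitrary one does \emph{not} have $O\subset B_2(x_0)$ (only $O\subset B_4(x_0)$); either take $x_0=0$, the center of the normalization $B_1\subset O\subset B_2$, or use the radius-$4$ ball with the correspondingly adjusted constant. A cosmetic alternative for the last step, which avoids the Lipschitz bound altogether, is to prolong the segment from $x\in Z_2$ through $y\in\partial Z_4$ until it meets $\partial O$ at a point $z$ and use convexity of $\varphi$ along $[x,z]$ together with $\varphi(z)=0$ to get $|x-y|\ge\tfrac12\,\dist(Z_2,\partial O)$; but your version is equally valid.
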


As an immediate consequence of Proposition \ref{prop:sections} and convexity we obtain the following result, which we state here as a lemma.

\begin{lemma}\label{lemma:firstclaim}
There is $c_1 = c_1(\lambda) > 0$ such that $\dist(\Phi_2(Z_2),\partial \Phi_2(Z_4)) \ge c_1.$
\end{lemma}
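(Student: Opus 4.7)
The plan is to deduce this from Proposition \ref{prop:sections} by observing that $\Phi_2$ has a lower Lipschitz estimate coming from the monotonicity of $\D\varphi$. Indeed, since $\varphi$ is convex, $\D\varphi$ is a monotone map: $(\D\varphi(x)-\D\varphi(y))\cdot(x-y)\ge 0$ for all $x,y\in O$. Expanding the square,
\[
|\Phi_2(x)-\Phi_2(y)|^2 = \tfrac{1}{2}\bigl|(\D\varphi(x)-\D\varphi(y))+(x-y)\bigr|^2 \ge \tfrac{1}{2}|x-y|^2,
\]
so $\Phi_2$ is one-to-one with $|\Phi_2(x)-\Phi_2(y)|\ge \tfrac{1}{\sqrt{2}}|x-y|$; equivalently, $\Phi_2^{-1}\colon \Omega\to O$ is $\sqrt{2}$-Lipschitz.

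Next, I would identify $\partial\Phi_2(Z_4)$ with $\Phi_2(\partial Z_4)$. Since $\varphi\in C^\infty(O)$ and $\varphi=0$ on $\partial O$ while $\varphi<0$ in $O$, the sublevel set $Z_4$ is compact and contained in the open set $O$, with boundary $\partial Z_4\subset O$. The map $\Phi_2\colon O\to\Omega$ is a homeomorphism, so it sends compact sets to compact sets and boundaries (inside $O$) to boundaries (inside $\Omega$); in particular, $\Phi_2(\partial Z_4)=\partial \Phi_2(Z_4)$.

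Combining these two observations gives the claim. Given any $z\in \Phi_2(Z_2)$ and $w\in \partial\Phi_2(Z_4)$, write $z=\Phi_2(x)$, $w=\Phi_2(y)$ with $x\in Z_2$ and $y\in\partial Z_4$. Then
\[
|z-w| = |\Phi_2(x)-\Phi_2(y)| \ge \tfrac{1}{\sqrt 2}|x-y| \ge \tfrac{1}{\sqrt{2}}\dist(Z_2,\partial Z_4) \ge \tfrac{\delta}{\sqrt{2}},
\]
where the last inequality uses Proposition \ref{prop:sections}. Setting $c_1\equiv \delta/\sqrt{2}$, which depends only on $\lambda$, finishes the proof.

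There is no real obstacle here; the only point requiring slight care is the identification $\partial\Phi_2(Z_4)=\Phi_2(\partial Z_4)$, which relies on the fact that $Z_4$ sits in the interior of $O$ so that its boundary is genuinely a boundary inside the homeomorphism's domain. The lower Lipschitz bound on $\Phi_2$ is the essence of Minty's correspondence and comes for free from convexity.
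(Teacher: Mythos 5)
Your proof is correct and follows essentially the same route as the paper: a lower Lipschitz bound $|\Phi_2(x)-\Phi_2(y)|\ge |x-y|/\sqrt{2}$ obtained from the monotonicity of $\D\varphi$ (the paper gets it via Cauchy--Schwarz rather than expanding the square, an immaterial difference), combined with Proposition \ref{prop:sections}. Your explicit remark that $\partial\Phi_2(Z_4)=\Phi_2(\partial Z_4)$, which the paper leaves implicit, is a welcome clarification.
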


\begin{proof}
By the Cauchy--Schwarz inequality and the convexity of $\varphi$,  for $x,y\in O$ we can write
\begin{align*}
|\Phi_2(x) - \Phi_2(y)||x-y| & \ge \left( \Phi_2(x)-\Phi_2(y),x-y\right) \\
&= \frac{1}{\sqrt 2}\left(|x-y|^2 + \left( \D\varphi(x)-\D\varphi(y),x-y\right) \right)\\
& \ge \frac{1}{\sqrt 2}|x-y|^2.
\end{align*}
Rearranging, we see that
$|\Phi_2(x) - \Phi_2(y)| \ge |x-y|/\sqrt{2}$, and the conclusion follows from Proposition \ref{prop:sections}.
\end{proof}

The next  result that we will need, and which is  due to \cite{Caffarelli1991},  gives a universal estimate on the $C^{1,\a}$ norm of the solution. The reader can find a proof  in \cite[Theorem 4.20]{Figalli2017}.

\begin{theorem}[$C^{1,\a}$-regularity]\label{thm:C1a}
In the setting of Theorem \ref{thm:MA},  there are universal constants $\alpha, c > 0$,  which only depend on $\lambda$,  such that
$\|\varphi\|_{C^{1,\alpha}(Z_4)} \le c.$
\end{theorem}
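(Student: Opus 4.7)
The plan is to follow Caffarelli's classical program, which proceeds via analysis of convex sections under affine rescalings. By the reduction at the start of Section \ref{sec:MA}, we may assume $g\in C^\infty$ with $\lambda^{-1}\le g\le \lambda$ and $\varphi$ the smooth, strictly convex solution to \eqref{MAbeq}, so the task reduces to an \emph{a priori} bound. The first ingredient is a quantitative strict convexity estimate on $Z_4$: Caffarelli's strict convexity theorem shows that the contact set of any supporting plane at $x_0\in\tp{int}(O)$ cannot contain extremal points inside $O$, which combined with $\varphi=0$ on $\p O$ and $g\ge \lambda^{-1}$ forces strict convexity in the interior. A compactness/contradiction argument, using only the normalization $B_1\subset O\subset B_2$ and the uniqueness of solutions to \eqref{MAbeq} (cf.\ \cite[Corollary 2.11]{Figalli2017}), then upgrades this qualitative fact to a universal modulus of strict convexity on $Z_4$.

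Next I would exploit the affine invariance of \eqref{MAbeq} via sections. For $x_0\in Z_4$ and $p=\n\varphi(x_0)$, set
\[
S_h(x_0)\equiv \{y\in O:\varphi(y)<\varphi(x_0)+(p,y-x_0)+h\}.
\]
By the first step, for $h\le h_0(\lambda)$ one has $S_h(x_0)\Subset O$ with diameter controlled in terms of $\lambda$. John's lemma yields an affine $T_h$ with $B_1\subset T_h^{-1}(S_h(x_0)-x_0)\subset B_2$, and the normalized map
\[
\tilde\varphi(y)\equiv h^{-1}\bigl[\varphi(x_0+T_hy)-\varphi(x_0)-(p,T_hy)\bigr]
\]
solves a Monge--Amp\`ere equation on a normalized convex domain with right-hand side $|\det T_h|^2 g(x_0+T_h\cdot)/h^n$ and satisfies $0\le \tilde\varphi\le 1$ with $\tilde\varphi(0)=0$. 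The Aleksandrov maximum principle, together with the bounds on $g$, forces $|\det T_h|^2/h^n\in[c(\lambda),C(\lambda)]$, so the rescaled family $\{\tilde\varphi\}$ lies in a \emph{compact} class of MA-solutions on normalized domains with $\lambda$-bounded right-hand side.

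Applying the quantitative strict convexity of the first step to this compact family produces universal constants $\mu,\theta\in(0,1)$ with $T_h^{-1}(S_{\mu h}(x_0)-x_0)\subset \theta B_2$; iterating gives $\diam(S_{\mu^k h_0}(x_0))\le C\theta^k$ for all $k$. Convexity of $\varphi$ forces $|\n\varphi(x)-p|$ to be controlled by the ratio of $\mu h$ to the width of $S_h(x_0)$ transverse to $x-x_0$, so the geometric decay of sections translates directly into $|\n\varphi(x)-\n\varphi(x_0)|\le C|x-x_0|^\a$ with $\a=\a(\lambda)>0$, uniformly over $x_0\in Z_4$. The delicate points are precisely the quantitative strict convexity and, more subtly, the $T_h$-independence of the contraction $\theta$: both reduce to compactness of the class of normalized MA-solutions and uniqueness for \eqref{MAbeq}, while the $Z_2$-to-$Z_4$ interior buffer supplied by Proposition \ref{prop:sections} ensures the sections $S_h(x_0)$ stay strictly inside $O$ for $h$ small.
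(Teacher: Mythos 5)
The paper does not actually prove this statement: it is Caffarelli's classical interior $C^{1,\alpha}$ estimate, quoted from \cite{Caffarelli1991} with a proof referenced in \cite[Theorem 4.20]{Figalli2017}. Your outline is essentially that standard Caffarelli program (quantitative strict convexity by compactness, normalized sections via John's lemma and Aleksandrov's estimate, geometric decay of sections yielding the H\"older gradient bound), so it follows the same route as the cited proof rather than a new one; the only caveat is that it remains a sketch, leaning on the two genuinely hard ingredients --- the universal modulus of strict convexity and the passage from section decay to $C^{1,\alpha}$, which also needs lower (volume/width) bounds on the sections, not just diameter decay --- exactly as carried out in the references.
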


In particular, recalling definition \eqref{eq:phi1}, we obtain:

\begin{corollary}\label{cor:holdser}
Let $B_\rho(x)\subset \Phi_2(Z_4)$ and set $\rho' \equiv c^{-1/\alpha}\rho^{1/\alpha}$. Then 
\begin{equation*}
B_{\rho'}(y) \subset \Phi_2^{-1}(B_{\rho}(x)), \qquad\text{where }  y=\Phi_2^{-1}(x).
\end{equation*}
\end{corollary}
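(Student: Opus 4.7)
The plan is to deduce the corollary directly from the $C^{1,\alpha}$-estimate of Theorem~\ref{thm:C1a} together with a convexity/connectedness argument that allows the Hölder bound of $\Phi_2$, valid on $Z_4$, to be propagated across the whole ball $B_{\rho'}(y)$. The key structural ingredients are: (a) sections $Z_t$ are convex, being sublevel sets of the convex function $\varphi$; (b) $\Phi_2$ is a homeomorphism on $O$, so it identifies interiors with interiors and boundaries with boundaries; (c) $Z_4\subset O\subset B_2$ has universally bounded diameter.

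First I would upgrade the $C^{1,\alpha}$-bound to a bona fide Hölder estimate for $\Phi_2$ on $Z_4$. From $\|\varphi\|_{C^{1,\alpha}(Z_4)}\le c$ one has $|\nabla\varphi(w_1)-\nabla\varphi(w_2)|\le c|w_1-w_2|^\alpha$ on $Z_4$, and combining with $\Phi_2(w)=(\nabla\varphi(w)+w)/\sqrt 2$ and the bounded diameter of $Z_4$ one obtains, after relabeling the constant,
\[
|\Phi_2(w_1)-\Phi_2(w_2)|\le c\,|w_1-w_2|^\alpha \quad \text{for all } w_1,w_2\in Z_4.
\]
With this constant, the choice $\rho'\equiv c^{-1/\alpha}\rho^{1/\alpha}$ precisely encodes the identity $c(\rho')^\alpha=\rho$.

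Next I would fix an arbitrary $z\in B_{\rho'}(y)$ and prove that $\Phi_2(z)\in B_\rho(x)$, by means of a continuation-along-a-segment argument. Since $B_\rho(x)\subset \Phi_2(Z_4)$ is open, $x$ lies in $\operatorname{int}(\Phi_2(Z_4))$ and, because $\Phi_2$ is a homeomorphism, $y\in\operatorname{int}(Z_4)$. Set $\gamma(t)\equiv (1-t)y+tz$ and
\[
T\equiv \sup\bigl\{s\in[0,1]:\gamma([0,s])\subset Z_4\bigr\}.
\]
By convexity and closedness of $Z_4$, $T>0$ and $\gamma([0,T])\subset Z_4$. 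Applying the Hölder bound to the endpoints $y,\gamma(T)\in Z_4$ yields
\[
|\Phi_2(\gamma(T))-x|\le c\,|\gamma(T)-y|^\alpha \le c\,|z-y|^\alpha<c(\rho')^\alpha=\rho,
\]
so $\Phi_2(\gamma(T))\in B_\rho(x)\subset\operatorname{int}(\Phi_2(Z_4))$ and, via the homeomorphism property, $\gamma(T)\in\operatorname{int}(Z_4)$. If $T<1$, continuity of $\gamma$ would produce some $T'>T$ with $\gamma([0,T'])\subset Z_4$, contradicting the definition of $T$. Hence $T=1$, so $z\in Z_4$, and the same Hölder estimate gives $|\Phi_2(z)-x|<\rho$.

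The main subtle point is that the Hölder control of $\Phi_2$ only holds on $Z_4$ (outside of it the $C^{1,\alpha}$-norm of $\varphi$ is not controlled); the convexity of $Z_4$ is precisely what guarantees that the segment $\gamma$ never leaves this good set, making the a priori Hölder bound usable all the way out to $z$. The rest of the argument is topological and mild, using that $\Phi_2$ is a global homeomorphism on $O$ — a fact granted by the strict convexity of $\varphi$ — and a standard continuation-of-interior argument.
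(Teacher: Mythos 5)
Your proof is correct and follows the same route the paper intends: the $C^{1,\alpha}$-bound of Theorem \ref{thm:C1a} makes $\Phi_2$ $\alpha$-H\"older on $Z_4$ (with a relabeled, still $\lambda$-dependent constant, which is all that is used later), so $|z-y|<\rho'$ forces $|\Phi_2(z)-x|<\rho$. Your convexity/continuation argument merely makes explicit the point the paper leaves implicit, namely that every point of $B_{\rho'}(y)$ actually lies in $Z_4$ so that the H\"older estimate is applicable there; this is a welcome addition, not a deviation.
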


The following lemma is an immediate application of Corollary \ref{cor:holdser}:

\begin{lemma}\label{lemma:secondclaim}
There is $N = N(\lambda)$ such that $\Phi_2(Z_2)$ can be covered by $N$ balls $(B_{\rho}(x_i))_{i = 1,\dots, N}$, with centers $x_i \in \Phi_2(Z_2)$ and radii $\rho = c_1/64$. 
\end{lemma}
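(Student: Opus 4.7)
The plan is simply to show that $\Phi_2(Z_2)$ is contained in a ball of universal radius depending only on $\lambda$, and then to cover it by a Vitali-type argument with balls of the fixed radius $\rho = c_1/64$, which is also universal in $\lambda$ by Lemma \ref{lemma:firstclaim}.

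First I would observe that, since $Z_2\subset Z_4 \subset O \subset B_2(0)$ and since by Theorem \ref{thm:C1a} we have $\|\D\varphi\|_{L^\infty(Z_4)}\leq \|\varphi\|_{C^{1,\alpha}(Z_4)}\leq c(\lambda)$, the definition \eqref{eq:phi1} of $\Phi_2$ immediately gives
\[
|\Phi_2(w)|\leq \tfrac{1}{\sqrt 2}\bigl(|\D\varphi(w)|+|w|\bigr)\leq R(\lambda),\qquad \forall w\in Z_2,
\]
for some constant $R(\lambda) > 0$. In particular $\Phi_2(Z_2)\subset B_{R(\lambda)}(0)$.

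Next, pick a maximal $(\rho/2)$-separated subset $\{x_1,\dots,x_N\}\subset \Phi_2(Z_2)$; then the balls $B_\rho(x_i)$ automatically cover $\Phi_2(Z_2)$ by maximality, and the balls $B_{\rho/4}(x_i)$ are pairwise disjoint and contained in $B_{R(\lambda)+\rho/4}(0)$. Comparing Lebesgue measures yields
\[
N\cdot \bigl|B_{\rho/4}\bigr|\leq \bigl|B_{R(\lambda)+\rho/4}(0)\bigr|,
\]
so $N\leq N(\lambda)$ since $\rho$ depends only on $\lambda$. This completes the argument.

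The main (and only) potential snag is ensuring $\rho$ really is a function of $\lambda$ alone, but this is guaranteed because $\rho = c_1/64$ and Lemma \ref{lemma:firstclaim} produces $c_1 = c_1(\lambda)$; all other constants involved trace back to the universal bounds in Theorem \ref{thm:C1a} and Proposition \ref{prop:sections}.
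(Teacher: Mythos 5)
Your argument is correct: the $C^{1,\alpha}$ bound of Theorem \ref{thm:C1a} indeed gives $\|\D\varphi\|_{L^\infty(Z_2)}\le c(\lambda)$, so $\Phi_2(Z_2)\subset B_{R(\lambda)}(0)$, and your maximal $(\rho/2)$-separated set plus volume comparison yields $N\le \bigl((R(\lambda)+\rho/4)/(\rho/4)\bigr)^2=N(\lambda)$, with centers in $\Phi_2(Z_2)$ and radius $\rho=c_1/64$ depending only on $\lambda$ via Lemma \ref{lemma:firstclaim}, exactly as the statement requires. The route is slightly different from the paper's: there one covers the \emph{domain} set $Z_2\subset B_2$ by $N(\lambda)$ balls of the small radius $\rho'=c^{-1/\alpha}\rho^{1/\alpha}$ centered at points $\Phi_2^{-1}(x_i)$ with $x_i\in\Phi_2(Z_2)$, and then pushes each ball forward through Corollary \ref{cor:holdser} (i.e.\ through the H\"older modulus of $\D\varphi$, which makes $\Phi_2(B_{\rho'}(\Phi_2^{-1}(x_i)))\subset B_\rho(x_i)$); you instead cover the \emph{image} $\Phi_2(Z_2)$ directly, which only needs the $L^\infty$ bound on $\D\varphi$ and not the H\"older exponent $\alpha$, so for this lemma your argument is marginally more elementary. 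The trade-off is purely cosmetic: the paper's version reuses Corollary \ref{cor:holdser}, which is needed anyway later in the proof of Theorem \ref{thm:MA} (to bound $|\Phi_2^{-1}(B)|$ from below), whereas yours keeps the covering step independent of that corollary; both rest ultimately on Theorem \ref{thm:C1a} and the normalization $O\subset B_2$, and both produce the same conclusion with constants depending only on $\lambda$.
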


\begin{proof}
Since $Z_2 \subset O \subset B_2$, we can find $N = N(\lambda)$ and $x_1,\dots, x_N \in \Phi_2(Z_2)$ such that
\[
Z_2 \subset \bigcup_{i = 1}^NB_{\rho'}(\Phi_2^{-1}(x_i)),
\]
where $\rho'=c^{-1/\a}\rho^{1/\a}$, as in Corollary \ref{cor:holdser}.
In turn, that corollary implies that
\[
\Phi_2(Z_2) \subset \bigcup_{i = 1}^N\Phi_2(B_{\rho'}(\Phi_2^{-1}(x_i))) \subset \bigcup_{i = 1}^NB_{\rho}(x_i),
\]
which concludes the proof.
\end{proof}

\subsection{Two elementary lemmas}

In order to prove Theorem \ref{thm:MA} we need two elementary but important lemmas, which further display the connection between the Monge--Amp\`ere equation and \eqref{eq:diffincMA}.

The first lemma will be used to control precisely the multiplicity of $u-S$ for $S\in \mc S$.

\begin{lemma}\label{lem:uShomeo}
Let $u$ be as in \eqref{eq:udef}.   Then $u-S$ is a homeomorphism for all $S \in \mc{S}$.
\end{lemma}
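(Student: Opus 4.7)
The plan is to reduce the claim to a pointwise computation on $O$. Because $\Phi_2$ is a homeomorphism from $O$ onto $\Omega$ (by the convexity of $\varphi$) and
\[
(u-S)(z) = \Phi_1(\Phi_2^{-1}(z)) - Sz,
\]
it suffices to show that the map
\[
\Psi_S(w) \equiv \Phi_1(w) - S\Phi_2(w) = \frac{1}{\sqrt 2}\bigl[(\Id-S)\D\varphi(w) - (\Id+S)w\bigr]
\]
is a homeomorphism from $O$ onto its image; composing with $\Phi_2^{-1}$ then finishes.

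The next step is to exploit the spectral structure of $S$. Writing $S = RA_0R^T$ with $R \in \SO(2)$, and letting $e_+ = Re_1$, $e_- = Re_2$ be the $\pm 1$-eigenvectors of $S$, the orthogonal projections $P_\pm$ onto $\R e_\pm$ satisfy $\Id + S = 2P_+$ and $\Id - S = 2P_-$. Setting $w_\pm \equiv \langle w, e_\pm\rangle$ and $\partial_-\varphi \equiv \langle \D\varphi,\, e_-\rangle$, the computation collapses to
\[
\Psi_S(w) = \sqrt 2\,\bigl(-w_+\, e_+ + \partial_-\varphi(w)\, e_-\bigr).
\]
In this form both the injectivity and the local-diffeomorphism properties of $\Psi_S$ become transparent.

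For injectivity, if $\Psi_S(w) = \Psi_S(w')$, the first coordinate forces $w_+ = w_+'$, so that $w$ and $w'$ lie on a common line parallel to $e_-$; by convexity of $O$, the segment from $w$ to $w'$ remains in $O$. Along this segment, the derivative of $t \mapsto \partial_-\varphi(w_+ e_+ + t e_-)$ is $\partial_{--}^2\varphi > 0$, the strict positivity coming from $\D^2\varphi$ being positive definite (it is positive semidefinite by convexity of $\varphi$, and $\det\D^2\varphi = g \geq \lambda^{-1} > 0$). Strict monotonicity then yields $w_- = w_-'$. For the local-diffeomorphism property, I would simply record that the Jacobian of $\Psi_S$ in the basis $(e_+, e_-)$ is lower triangular with diagonal entries $(-\sqrt 2,\, \sqrt 2\,\partial_{--}^2\varphi)$ and thus nonsingular everywhere on $O$. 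Since an injective local diffeomorphism is open and bijects onto its (open) image, $\Psi_S$ is a diffeomorphism onto $\Psi_S(O)$, which proves the lemma. I do not foresee any serious obstacle; the only subtlety is selecting the basis so that the computation becomes lower triangular, after which both conditions are essentially read off.
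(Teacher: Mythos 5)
Your proof is correct and follows essentially the same route as the paper: you reduce to the map $w\mapsto \Phi_1(w)-S\Phi_2(w)$ on $O$ and diagonalize $S$, which is exactly the paper's reduction to the model case $S=A_0$ via conjugation by an orthogonal matrix, with injectivity coming from strict monotonicity of a directional derivative of $\varphi$ along the $-1$-eigendirection. The only cosmetic differences are that you conclude openness from the nonsingular (triangular) Jacobian rather than invariance of domain, and you derive strict monotonicity from positive definiteness of $\D^2\varphi$ (valid in the smooth setting in force here) where the paper invokes strict convexity.
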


\begin{proof}
Fix any $S \in \mc{S}$. By the invariance of domain theorem, if suffices to show that $u-S$ is injective on $\Omega$ for all $S \in \mc{S}$. From \eqref{eq:udef} we have, for all $x \in O$:
\begin{equation}\label{defeq}
u\left(\frac{x + \D\varphi(x)}{\sqrt{2}}\right) = \frac{\D\varphi(x) - x}{\sqrt{2}},
\end{equation}
where we recall that $\varphi$ is a smooth, strictly convex solution to \eqref{MAbeq}. Rewrite \eqref{defeq} as
\begin{equation}\label{defeq1}
(u-S)\left(\frac{x + \D\varphi(x)}{\sqrt{2}}\right) = \frac{\D\varphi(x) - x}{\sqrt{2}} - S\left(\frac{x + \D\varphi(x)}{\sqrt{2}}\right).
\end{equation}
Since $x + \D\varphi(x)$ is a homeomorphism,  \eqref{defeq1} shows that $u-S$ is injective if and only if
\[
\Phi(x) \equiv \D\varphi(x) - x - S(x + \D\varphi(x))
\]
is injective.

If we first let $S = A_0$, we see that
\[
\Phi(x) = \left(\begin{array}{c} -2x_1\\ 2\partial_2\varphi(x)\end{array}\right).
\]
Thus, the injectivity of $\Phi$ follows from the strict convexity of $\varphi$. In the general case where $S$ does not have the form above, we can anyway find $M \in \tp O(2)$ such that
\[
A_0 = M^TSM.
\]
In this case we define
\[
\Psi(x) \equiv M^T\Phi(Mx), \qquad \psi(x) \equiv \varphi(Mx).
\]
Then $\psi$ solves again the Monge--Amp\`ere equation \eqref{MAbeq}, albeit with a different smooth right-hand side $g$. Furthermore,
\begin{align*}
\Psi(x) &=M^T\left[\D\varphi(Mx) -Mx - S(Mx + \D\varphi(Mx))\right] \\
&= M^T\D\varphi(Mx) -x - M^TSM(x + M^T\D\varphi(Mx))\\
& = \D\psi(x) - x - A_0(x + \D\psi(x)).
\end{align*}
Hence, by the case where $S=A_0$, we deduce that $\Psi$ is injective.  It follows that $\Phi$ is injective as well,  which concludes the proof.
\end{proof}

The next lemma links precisely the higher integrability of the solution to \eqref{MAbeq} with the decay of the distance of $u$ to $\mc S$.

\begin{lemma}\label{lem:linked}
Let $\varphi$ be the solution of \eqref{MAbeq} and let  $u$ be as in \eqref{eq:udef}.  In $\Omega$, we have
\begin{equation}\label{distgr}
|\D^2\varphi| \sim_{\lambda} \dist^{-1}(\D u,\mc{S})\circ \Phi_2
\end{equation}
\end{lemma}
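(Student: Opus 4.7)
The plan is to compute both sides of \eqref{distgr} explicitly via the eigenstructure of $H \equiv \D^2 \varphi(w)$, exploiting that the Minty correspondence intertwines the two in a clean algebraic way. First, applying the chain rule to \eqref{eq:udef} and \eqref{eq:phi1} yields
\[
\D u(\Phi_2(w)) = (H - \Id)(H + \Id)^{-1},
\]
so $\D u(\Phi_2(w))$ is symmetric and shares the orthonormal eigenbasis $v_1, v_2$ of $H$. Writing the eigenvalues of $H$ as $\lambda_1 \geq \lambda_2 > 0$ (positive by strict convexity), those of $\D u(\Phi_2(w))$ are $\mu_i \equiv (\lambda_i-1)/(\lambda_i+1) \in (-1,1)$. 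The relation $|H| = \lambda_1$, together with $\det H = g(w) \in [\lambda^{-1},\lambda]$, will be used repeatedly; in particular $\lambda_2 = g(w)/\lambda_1 \sim_\lambda 1/\lambda_1$ when $\lambda_1$ is large, while $\lambda_1 \geq \lambda^{-1/2}$ holds universally.

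For the upper bound $\dist(\D u, \mc{S}) \lesssim_\lambda 1/|H|$, I would just test against the natural candidate $S_0 \equiv v_1 \otimes v_1 - v_2 \otimes v_2 \in \mc{S}$, which is diagonal in the eigenbasis of $\D u$. A direct computation gives $\D u - S_0 = \diag\bigl(-\tfrac{2}{\lambda_1+1},\, \tfrac{2\lambda_2}{\lambda_2+1}\bigr)$ in this basis, whose operator norm is controlled by $C(\lambda)/\lambda_1$ after inserting the two-sided bound on $\lambda_2$.

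The subtler half is the lower bound $\dist(\D u, \mc{S}) \gtrsim_\lambda 1/|H|$, where one must estimate $|\D u - S|$ from below \emph{uniformly} over the full circle of reflections $\mc{S}$. To sidestep minimizing directly over the rotation angle, I would invoke the elementary bound $|A|^2 \geq |\det A|$, valid for any symmetric $2\times 2$ matrix since $\sigma_{\max}^2 \geq \sigma_{\max}\sigma_{\min} = |\det A|$. Combining the $2\times 2$ identity $\det(A-B) = \det A + \det B - \tr(A)\tr(B) + \tr(AB)$ with $\tr S = 0$ and $\det S = -1$ produces
\[
\det(\D u - S) = \mu_1 \mu_2 - 1 + \tr(\D u\, S).
\]
Parametrizing an arbitrary $S \in \mc{S}$ in the eigenbasis of $\D u$ as $\bigl(\begin{smallmatrix} s & t \\ t & -s\end{smallmatrix}\bigr)$ with $s^2 + t^2 = 1$ gives $\tr(\D u\, S) = (\mu_1 - \mu_2)s$, so $\det(\D u - S)$ is affine in $s \in [-1,1]$; extremizing yields the uniform lower bound
\[
|\det(\D u - S)| \geq \frac{4 \lambda_2}{(\lambda_1+1)(\lambda_2+1)} \gtrsim_\lambda \frac{1}{\lambda_1^2},
\]
and taking square roots closes the estimate. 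The main obstacle is exactly this uniformity in $S \in \mc{S}$; once the determinant trick is in place, the rest reduces to routine bookkeeping using $\lambda_1 \lambda_2 \in [\lambda^{-1}, \lambda]$.
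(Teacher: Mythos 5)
Your argument is correct, and its computational core coincides with the paper's: differentiate the Minty relation to get $\D u\circ\Phi_2=(\D^2\varphi-\Id)(\D^2\varphi+\Id)^{-1}$, pass to the eigenbasis of $\D^2\varphi$, and translate the eigenvalue map $\lambda_i\mapsto(\lambda_i-1)/(\lambda_i+1)$ together with $\lambda_1\lambda_2\in[\lambda^{-1},\lambda]$ into the two-sided bound. Where you genuinely diverge is in how the infimum over $\mc S$ is handled. The paper observes that $\D u\circ\Phi_2$ is diagonal with ordered entries in this basis, asserts that the nearest reflection is then exactly $A_0$, and computes $|X-A_0|$ in closed form via the identity $X=(\det(Y)\Id+Y-\cof(Y)-\Id)/\det(Y+\Id)$; the exact-nearest-point claim is stated without proof (it is true, but needs a small argument). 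You instead use $A_0$ (your $S_0$) only as a competitor for the upper bound, and obtain the lower bound uniformly over all $S\in\mc S$ from the elementary inequality $|A|^2\ge|\det A|$ combined with the $2\times2$ expansion of $\det(\D u-S)$, which is affine in the parameter $s$ and hence extremized at the endpoints, giving $|\det(\D u-S)|\ge 4\lambda_2/((\lambda_1+1)(\lambda_2+1))\gtrsim_\lambda\lambda_1^{-2}$. This buys you a self-contained treatment of the minimization over the circle of reflections (no need to identify or justify the minimizer), at the price of a slightly less explicit formula; the paper's route is marginally more direct once the nearest-point identification is granted. All the quantitative steps you cite check out ($\lambda_2\sim_\lambda 1/\lambda_1$ in fact holds for all $\lambda_1$, not only large ones, and $\lambda_1\ge\lambda^{-1/2}$ controls $\lambda_1+1\lesssim_\lambda\lambda_1$), so the proposal is a complete proof of \eqref{distgr}.
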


\begin{proof}
We start once again from \eqref{defeq} and we take its differential: we obtain
\begin{equation}\label{XS}
\D u\left(\frac{x + \D\varphi(x)}{\sqrt{2}}\right)(\Id +\D^2\varphi(x)) = \D^2\varphi(x)-\Id.
\end{equation}
We set 
\[
X \equiv \D u\left(\frac{x + \D \varphi(x)}{\sqrt{2}}\right) \quad \text{ and }\quad  Y \equiv \D^2\varphi(x).
\]
By \eqref{MAbeq} we have
\begin{equation}\label{Ssym}
Y \in \Sym^+(2), \quad \text{ with } \lambda^{-1} \le \det(Y) \le \lambda.
\end{equation}
Furthermore, $\eqref{XS}$ can be rewritten as
\[
X = (Y-\Id)(Y + \Id)^{-1} = \frac{\det(Y)\Id + Y - \cof(Y) - \Id}{\det(Y + \Id)}.
\]
To prove the lemma,  we can assume that $Y$ is diagonal, i.e.
\[
Y = \begin{pmatrix} \lambda_1 & 0 \\ 0 & \lambda_2\end{pmatrix}, \quad \text{ with $\lambda_1 \ge \lambda_2$,}
\]
since the general case follows by a suitable conjugation by $M \in \tp O(2)$.    Notice for later use that, since $\lambda_1 \ge \lambda_2$, we have
\begin{equation}\label{mineig}
0 \le \lambda_2 \le \sqrt{\lambda_1\lambda_2} \le \sqrt{\lambda},
\end{equation}
by \eqref{Ssym}.  We can also compute
\begin{equation}\label{longm}
\det(Y)\Id + Y - \cof(Y) - \Id =\begin{pmatrix}
\lambda_1\lambda_2 + \lambda_1 - \lambda_2 - 1 & 0 \\ 0 & \lambda_1\lambda_2 + \lambda_2 - \lambda_1 - 1
\end{pmatrix},
\end{equation}
and hence $X$ is a diagonal matrix with elements $x_{11}\geq x_{22}$. It follows that 
\[
\dist(X,\mc{S}) = |X-A_0| = \left|\frac{\det(Y)\Id + Y - \cof(Y) - \Id - A_0\det(Y + \Id)}{\det(Y + \Id)}\right|
\]
and hence we only need to show that
\begin{equation}\label{fc}
\left|\frac{\det(Y)\Id + Y - \cof(Y) - \Id - A_0\det(Y + \Id)}{\det(Y + \Id)}\right| \sim_\lambda |Y|^{-1}
\end{equation}
to conclude the proof of the lemma.  Notice that, due to \eqref{Ssym} and \eqref{mineig},
\[
\det(Y + \Id) = \det(Y) + 1 + \tr(Y) = \lambda_1 \lambda_2 + 1 + \lambda_1 + \lambda_2 \sim_\lambda |Y|.
\]
Therefore, to show \eqref{fc}, it suffices to prove that
\begin{equation}\label{fc1}
|\det(Y)\Id + Y - \cof(Y) - \Id - A_0\det(Y + \Id)| \sim_\lambda 1.
\end{equation}
To prove this last claim, we return to \eqref{longm} and we compute
\begin{align*}
\det(Y)\Id + Y - \cof(Y) - \Id - A_0\det(Y + \Id) 
&= \begin{pmatrix}
-2\lambda_2 - 2& 0 \\ 0 & 2\lambda_1\lambda_2 + 2\lambda_2
\end{pmatrix}.
\end{align*}
The norm of this matrix is comparable to
\[
 \lambda_1\lambda_2+\lambda_2 + 1,
\]
which is clearly bounded from below by $1$ and is bounded from above by  \eqref{mineig}. This concludes the proof of \eqref{fc1} and hence of the present lemma.
\end{proof}

\subsection{Proof of Theorem \ref{thm:MA}}
Fix any $q \in [0,\infty)$. Due to Lemma \ref{lem:linked}, we see that
\begin{equation*}\label{eq:firstt}
\int_{Z_2}|\D^2\varphi|^{1+q}(x)\d x \lesssim_\Lambda \int_{Z_2}|\D^2\varphi|(x)\dist^{-q}(\D u,\mc{S})\left(\Phi_2(x)\right)\tp d x.
\end{equation*}
We now change variables according to $y = \Phi_2(x)$. Observe that
\[
\D\Phi_2(x) = \frac{\D^2\varphi(x) + \Id}{\sqrt{2}},
\]
whence, for all points $x \in O$, by \eqref{MAbeq},
\begin{equation}\label{eq:secondd}
\det(\D\Phi_2(x)) = \frac 1 2 \det(\D^2 \varphi(x) + \Id) \geq \frac 1 2(1+\tr(\D^2\varphi(x)))\geq \frac{|\D^2 \varphi(x)|}{2}.
\end{equation}
Therefore
\begin{equation}\label{eq:thirdd}
\int_{Z_2}|\D^2\varphi|^{1+q}(x)\d x \lesssim_\lambda \int_{\Phi_2(Z_2)}\dist^{-q}(\D u,\mc{S})(y)\d y.
\end{equation}
%
%
To complete the proof,  we can exploit Theorem \ref{thm:qttive} and \eqref{eq:thirdd}. However,  it is possible to avoid using Theorem \ref{thm:qttive}, and in particular Proposition \ref{prop:unimultbounds}, and so for the sake of simplicity we do so here.

Combining Lemmas \ref{lemma:diffincMA}, \ref{lemma:qcenvelope} and \ref{lem:uShomeo},  we see that the map $u - S$ is $\lambda$-quasiconformal for all $S \in \mc{S}$,  hence $\dist(\D u,\mathcal{S}) > 0$ a.e. in $\Omega$. This also allows us to go back to the proof of Theorem \ref{thm:main} and deduce that actually the constants do not depend on the multiplicity $N(u-\varphi,\Phi_2(Z_4))$, which is $1$ for every choice of $S$ (recall that, in that proof, $\varphi(x) = Sx$). In particular,  in this setting, we can  exploit Lemma \ref{lemma:revholder} directly thus getting, for some $\sigma = \sigma(\lambda) > 0$,
\begin{equation}\label{eq:finalissima}
\left(\fint_B \dist(\D u,\mathcal{S})^{2+\sigma} \d x\right)^{\frac{1}{2+\sigma}} \leq C(\lambda)\fint_{B} \dist(\D u, \mathcal{S}) \d x,
\end{equation}
for all balls $B \subset \Omega$ with $2B \subset \Omega$. 
Now we can use the same reasoning as in the proof of Theorem \ref{thm:qttive}, namely we can employ Theorem \ref{thm:Ap} to deduce the existence of $\varepsilon,c > 0$ depending only on $\sigma$ and $C$, and hence ultimately depending only on  $\lambda$,  such that
\begin{equation}\label{eq:invMA}
\fint_{B}\dist^{-\varepsilon}(\D u,\mathcal{S})\d x \le c\left(\fint_{B}\dist(\D u,\mathcal{S})\d x\right)^{-\varepsilon}
\end{equation}
for all balls $B \in \mc{B}_{32}(\Omega)$.  

Now we consider the balls $B_i\equiv B_{\rho}(x_i)$ given by Lemma \ref{lemma:secondclaim}, with $\rho \equiv c_1/64$.   By  Lemma \ref{lemma:firstclaim}, we see that  $32B_i \subset \Phi_2(Z_4) \subset \Omega$, hence \eqref{eq:invMA} holds for $B_i$ in place of $B$. In order to conclude the proof of \eqref{eq:1eps},  from  \eqref{eq:thirdd} we see that we only need to show that there exists a constant $c_2 = c_2(\lambda)>0$ such that
\[
\int_{B_i}\dist(\D u,\mathcal{S})\d x \ge c_2, \quad \forall i=1,\dots, N.
\]
To prove this claim,  it is enough to show that there exists a constant $c_2 = c_2(\lambda) > 0$ such that for all balls $B = B_\rho(x)$ and all $x \in \Phi_2(Z_2)$, we have
\begin{equation}\label{eq:beb}
\int_{B}\dist(\D u,\mathcal{S})\d x \ge c_2.
\end{equation}

To prove \eqref{eq:beb},  we consider once again the change of variables $y = \Phi_2(x)$ to write
\[
\int_{B}\dist(\D u,\mathcal{S})\d y = \int_{\Phi_2^{-1}(B)}\dist(\D u,\mathcal{S})(\Phi_2(x))\det(\D\Phi_2(x))\d x.
\]
By Lemma \ref{lem:linked} and \eqref{eq:secondd}, we obtain a constant $c_3 = c_3(\lambda) > 0$ such that 
\begin{align}\label{eq:phi2}
\begin{split}
 \int_{\Phi_2^{-1}(B)}\dist(\D u,\mathcal{S})(\Phi_2(x))\det(\D\Phi_2(x))\d x  & \ge  c_3\int_{\Phi_2^{-1}(B)}\frac{\det(\D\Phi_2(x))}{|\D^2\varphi|(x)}\d x\\
& \geq \frac{c_3}{2}|\Phi_2^{-1}(B)|.
 \end{split}
\end{align}
By Lemma \ref{lemma:firstclaim},  we have that $B \subset \Phi_2(Z_4)$ and hence we can apply Corollary \ref{cor:holdser} to find a lower bound on $|\Phi_2^{-1}(B)|$ in terms of $(\rho')^2= c^{-2/\a} \rho^{2/\a}$. 
Recalling from Theorem \ref{thm:C1a} and Lemma \ref{lemma:secondclaim} that $c,\rho$ and $\alpha$ only depend on $\lambda$, we see that this estimate, combined with \eqref{eq:phi2},  implies the required universal estimate \eqref{eq:beb}, which concludes the proof.

\appendix

\section{Proof of Theorem \ref{thm:Ap}}\label{app:a}

We will use freely the notation of Section \ref{sec:muc}. Moreover, we introduce the measure
\[
\mu(E) \equiv \int_E w(x)\d x.
\]
We can assume that $w$ is not identically $0$ in $\Omega$, otherwise the proof is finished. Now let $w$ satisfy \eqref{eq:ainf} with constants $\eps$ and $C$, where $t=2$ without loss of generality. By the exact same argument as in the proof of $(c) \implies (d)$ of \cite[Theorem 9.3.3]{Grafakos2009}, we find that
\begin{equation}\label{eq:AB}
\mu(A) \le C\left(\frac{|A|}{|B|}\right)^\frac{\eps}{1 + \eps}\mu(B), \quad \forall B \in \mc{B}_2(\Omega),\; \forall  A \subset B \text{ measurable.}
\end{equation}
We divide the proof into three steps.

\subsection*{Step 1: $w > 0$ a.e.\ in $\Omega$.}

This property can essentially be found in \cite[Lemma 14.5.1]{Iwaniec2001}, although we reproduce the argument here for completeness. 
Let 
\begin{align*}
Z \equiv \{x \in\Omega: w(x) = 0\},\qquad  Z' \equiv \{x\in {\Omega}: x \text{ is a density 1 point for } Z\}. 
\end{align*}
We need to show that $Z' = \emptyset$. Assume by contradiction that $Z'$ is not empty. For every $x_0 \in \Omega$ and every small $r$, we see that $B_r(x_0) \in \mc{B}_2(\Omega)$. Therefore, for every such $r$, \eqref{eq:AB} yields
\[
\mu(B_r(x_0)) = \mu(B_r(x_0)\setminus Z) \le C\left(\frac{|B_r(x_0)\setminus Z|}{|B_r(x_0)|}\right)^\frac{\eps}{1 + \eps}\mu(B_r(x_0)).
\]
However, since $x_0$ is a density $1$ point of $Z$, then
\[
\lim_{r\to 0} \left(\frac{|B_r(x_0)\setminus Z|}{|B_r(x_0)|}\right)^\frac{\eps}{1 + \eps}= 0,
\]
which implies that there actually exists $r_0 = r_0(x_0) > 0$ such that $w = 0$ on $B_{r_0}(x_0)$. This shows that the set $Z'$ is open in $\Omega$.

Consider once again $\eps$ and $C$ fulfilling \eqref{eq:ainf}. We claim that we can find $h_0 > 0$ only depending on $\eps$ and $C$ such that for all $B_r(x) \in \mc{B}_2(\Omega)$ and all $|h| \le h_0$, if $B_r(x +h) \in \mc{B}_2(\Om)$ and $w = 0$ a.e.\ on $B_r(x)$, then $w = 0$ a.e.\ on $B_r(x+ h)$. To this end, let $W \equiv B_{r}(x+h)\setminus B_r(x)$. We may again employ \eqref{eq:AB} to see that
\[
\mu(B_{r}(x+h)) = \mu (B_{r}(x+h)\setminus B_r(x)) \le C\left(\frac{|B_{r}(x+h)\setminus B_r(x)|}{|B_{r}(x+h)|}\right)^\frac{\eps}{1 + \eps}\mu(B_{r}(x+h)).
\]
It suffices to take $h_0$ such that
\[
C\left(\frac{|B_{r}(x+h)\setminus B_r(x)|}{|B_{r}(x+h)|}\right)^\frac{\eps}{1 + \eps} \le \frac{1}{2}
\]
for all $x \in \R^n$, $r >0$, $|h| \le h_0$. 

To reach a contradiction, we only need to show that $Z'$ is relatively closed in $\Omega$. To this end, let $x \in \Omega$ be the limit point of some sequence $(x_j)_j \subset Z'$ and let $h_0$ be as in the previous paragraph. Let $j$ be sufficiently large to ensure that
\[
|x_j- x| \le \frac 1 2\min\left\{\dist(x,\partial\Omega),h_0\right\}.
\]
By openness of $Z'$ there is $r_j$ be such that $w = 0$ a.e. on $B_{r_j}(x_j)$. We can assume that 
\[
r_j \le \frac{\dist(x,\partial\Omega)}{4}.
\]
Then we see that $B_{r_j}(x_j) \in \mc{B}_2(\Omega)$, and, setting $h \equiv x-x_j$, we also have $B_{r_j}(x_j + h) \in \mc{B}_2(\Om)$. Thus, by the property of the previous paragraph $w = 0$ a.e.\ on $B_{r_j}(x_j + h) = B_{r_j}(x)$, and hence $x \in Z'$. By connectedness, $Z' = \Omega$, contradicting our assumption that $w \neq 0$ on a set of positive measure on $\Omega$.

\subsection*{Step 2: doubling property of $\mu$.} 

We have the following property: there exist $\alpha,\beta \in( 0,1)$ depending only on $n$ and on the constants $\eps$ and $C$ of \eqref{eq:ainf} such that, for every $B \in \mc{B}_2(\Om)$ and for all measurable $A \subset B$,
\begin{equation}\label{eq:smalln}
\mu(A) \le \alpha \mu(B)\quad \implies \quad |A| \le \beta|B|.
\end{equation}
This is shown exactly as in the step $(d) \implies (e)$ of \cite[Theorem 9.3.3]{Grafakos2009}.

We now note that \eqref{eq:smalln} implies the doubling property of the measure $\mu \equiv w \d x$ for balls $B \in {\mc{B}}_{8}(\Om) \subset \mc{B}_2(\Om)$. Indeed, we can first choose any constant $1 < \lambda < 2$ such that:
\[
|(\lambda B) \setminus B| \le \frac{1-\beta}{2\beta}|B|, \quad \text{for every ball $B \subset \R^n$}.
\]
Notice that $\lambda$ only depends on $C$ and $\eps$ through $\beta$. But then \eqref{eq:smalln} implies that
\[
\mu(B) > \alpha \mu(\lambda B), \quad \text{for every ball $B \subset \R^n$ such that $\lambda B \in \mc{B}_2(\Om)$.}
\]
This can be reiterated in an obvious way to obtain a constant $c > 0$ depending on $n,\eps$ and $C$ such that the following doubling property of $\mu$ holds:
\begin{equation}\label{eq:doub}
 \mu(3B) \le c\mu(B), \quad \forall B \in {\mc{B}}_8(\Om).
\end{equation}

\subsection*{Step 3: Showing \eqref{eq:revholderprelim}.}

The doubling property \eqref{eq:doub} allows us to pass from balls to cubes. We use for cubes the same notation as for balls: $Q$ is a short-hand notation for $Q_r(x)$, the open cube of side $2r$ centered at $x$, and $\lambda Q$ is a short-hand notation for $Q_{\lambda r}(x)$. In particular, notice that for all $r> 0$ and $x \in \R^n$,
\begin{equation}\label{eq:cubesballs}
Q_{r/2}(x) \subset B_{r}(x) \subset Q_r(x).
\end{equation}
Set, for $t \ge 1$,
\[
\mc{Q}_t (\Om)\equiv \{Q: \text{Q is a cube with } tQ \subset \Omega\}.
\]
Using \eqref{eq:cubesballs}, from \eqref{eq:doub} we immediately deduce
\begin{equation}\label{eq:doubq}
\mu(3Q) \le c\mu(Q), \quad \forall Q \in \mc{Q}_{32}(\Om),
\end{equation}
and that, due to \eqref{eq:smalln}, there exist $\tilde\alpha,\tilde\beta \in( 0,1)$ depending only on $n,\eps$ and $C$ such that, for every $Q \in \mc{Q}_{32}(\Om)$ and for all measurable $A \subset Q$,
\begin{equation*}\label{eq:smallnq}
\mu(A) \le \tilde\alpha \mu(Q)\quad \implies \quad |A| \le \tilde\beta|Q|.
\end{equation*}
This can be rewritten in terms of $w$ as
\begin{equation}\label{eq:inverse}
\mu(A) \le \tilde \alpha\mu(Q) \quad\implies\quad \int_{A}w^{-1}(x)\d\mu(x) \le \tilde \beta \int_{Q}w^{-1}(x)\d\mu(x).
\end{equation}

We can finally conclude the proof of \eqref{eq:revholderprelim}. We follow the proof of $(e) \implies (f)$ of \cite[Theorem 9.3.3]{Grafakos2009}.  
In \cite[Corollary 9.2.4]{Grafakos2009}, it is shown that property \eqref{eq:inverse} implies a reverse H\"older inequality for $w^{-1}$ with respect to the measure $\mu$. The corollary is stated over $\R^n$ but its proof is clearly local, i.e.\ it only depends on the estimate \eqref{eq:inverse} inside the cube $Q$ and on properties of $w$ inside $Q$; see also  \cite[Theorem 9.2.2]{Grafakos2009}.  In particular it is easy to see that it works verbatim in our case as well. Thus, employing the same proofs as \cite[Theorem 9.2.2 and Corollary 9.2.4]{Grafakos2009}, we see that \eqref{eq:inverse} implies a reverse H\"older inequality for $w^{-1}$ with respect to the measure $\mu$, i.e.\ there exists $p > 1$, $c > 1$ only depending on $n, \tilde\alpha$ and $\tilde\beta$, and hence ultimately only on  $n,\eps,C$, such that
\begin{equation}\label{eq:inverseh}
\left(\frac{1}{\mu(Q)}\int_{Q} w^{-p}\d\mu(x)\right)^{\frac{1}{p}} \le\frac{c}{\mu(Q)}\int_{Q} w^{-1}\d\mu(x),
\end{equation}
for all $Q \in \mc Q_{32}(\Om)$. Rewrite \eqref{eq:inverseh} as:
\begin{equation}\label{eq:inversehh}
\int_{Q} w^{1-p}\d x \le c^p |Q|^p\mu(Q)^{1-p} = c^p |Q|^p\left(\int_{Q}w(x)\d x\right)^{1-p}.
\end{equation}
Due to \eqref{eq:cubesballs} and the monotonicity of the integrals, \eqref{eq:inversehh} implies
\begin{equation}\label{eq:inversehhh}
\int_{B} w^{1-p}\d x \le c^p 2^{np} |B|^p\left(\int_{B}w(x)\d x\right)^{1-p},
\end{equation}
for all balls $B \in \mc{B}_{32}(\Om)$, which is precisely \eqref{eq:revholderprelim}. This concludes the proof.

	\let\oldthebibliography\thebibliography
	\let\endoldthebibliography\endthebibliography
	\renewenvironment{thebibliography}[1]{
	\begin{oldthebibliography}{#1}
	\setlength{\itemsep}{0.5pt}
	\setlength{\parskip}{0.5pt}
	}
	{
	\end{oldthebibliography}
	}
	
	{\small
	\bibliographystyle{abbrv-andre}
	\bibliography{library}
	}

\end{document}